\newcommand{\Ad}{\operatorname{Ad}}
\newcommand{\id}{\operatorname{id}}
\newcommand{\Tr}{\operatorname{Tr}}
 \newcommand{\supp}{\operatorname{supp}}
   \theoremstyle{plain}
   \newtheorem{thm}{Theorem}[section]
   \newtheorem{prop}[thm]{Proposition}
   \newtheorem{lemma}[thm]{Lemma}  
   \newtheorem{cor}[thm]{Corollary}
   \theoremstyle{definition}
   \newtheorem{defn}[thm]{Definition}
   \newtheorem{example}[thm]{Example}
   \theoremstyle{remark}
   \newtheorem{remark}[thm]{Remark}
   \numberwithin{equation}{section}
        \date{\today}
\title[Homeomorphisms and KMS states]{KMS states on the crossed product $C^{*}$-algebra of a homeomorphism}
\author{Johannes Christensen and Klaus Thomsen}
\date{\today}
\email{ johannes@math.au.dk, matkt@math.au.dk}
\address{Department of Mathematics, Aarhus University, Ny Munkegade, 8000 Aarhus C, Denmark}
\begin{document}

\maketitle

\begin{abstract}
Let $\phi:X\to X$ be a homeomorphism of a compact metric space $X$. For any continuous function $F:X\to \mathbb{R}$ there is a one-parameter group $\alpha^{F}$ of automorphisms on the crossed product $C^*$-algebra $C(X)\rtimes_{\phi}\mathbb{Z}$ defined such that $\alpha^{F}_{t}(fU)=fUe^{-itF}$ when $f \in C(X)$ and $U$ is the canonical unitary in the construction of the crossed product. In this paper we study the KMS states for these flows by developing an intimate relation to the ergodic theory of non-singular transformations and show that the structure of KMS-states can be very rich and complicated. Our results are complete concerning the set of possible inverse temperatures; in particular, we show that when $C(X) \rtimes_{\phi} \mathbb Z$ is simple this set is either $\{0\}$ or the whole line $\mathbb R$. 
\end{abstract}

\section{Introduction}
Many $C^*$-algebras possess canonical groups of automorphisms arising from their construction and often they give rise to continuous one-parameter groups of automorphisms, in the following referred to as a flow. In certain models in quantum statistical mechanics such a flow represents the time-evolution while the self-adjoint elements of the $C^*$-algebra represent observables and the equilibrium states are given by states on the $C^*$-algebra which satisfy the KMS-condition, cf. \cite{BR}. For this and other reasons the KMS states of the flows that arise from the construction of various $C^*$-algebras have been investigated and in many cases completely described. Nonetheless the $C^*$-algebra constructed from a homeomorphism of a compact metric space has been missing in this picture despite that it is the most classical way to build an infinite dimensional simple unital $C^*$-algebra. We try in this paper to demonstrate the size of this oversight. 

Specifically, we study the flows $\alpha^F$ described in the abstract, given by a continuous function $F : X \to \mathbb R$ which we shall often refer to as a potential. Fundamental properties of $\alpha^F$ such as innerness and approximate innerness, as well as the factor types of its KMS states, depend heavily on the properties of $F$, and in particular of whether or not there is a solution to the cohomological equation
\begin{equation}\label{23-10-19}
F \ = \ h \circ \phi - h \ .
\end{equation}
The flow $\alpha^F$ is inner if and only if there is a continuous solution $h$ as we show in Theorem \ref{08-03-19c}, and it is approximately inner if and only if the equation admits a continuous solution in an asymptotic sense, which happens if and only if
\begin{equation*}
\int_X F \ \mathrm{d} \nu \ = \ 0 
\end{equation*}
for all $\phi$-invariant Borel probability measures $\nu$, cf. Theorem \ref{thm41}. The $\beta$-KMS states of $\alpha^F$ are almost completely determined by Borel probability measures $m$ that are $e^{\beta F}$-conformal in the sense that
$$
\frac{\mathrm{d} m \circ \phi}{\mathrm{d} m}  = e^{\beta F} \ ,
$$
cf. Lemma \ref{lemma21}; in fact, when $C(X)\rtimes_{\phi} \mathbb Z$ is simple there is an affine homeomorphism between the simplex of $\beta$-KMS states $\omega$ for $\alpha^F$ and the compact convex set of $e^{\beta F}$-conformal measures $m$ given by the equation
\begin{equation*}\label{23-10-19b}
\omega(a) \ = \ \int_X E(a) \ \mathrm{d} m \ ,
\end{equation*}
where $E : C(X) \rtimes_{\phi} \mathbb Z \to C(X)$ is the canonical conditional expectation, cf. Corollary \ref{01-08-19e}. Other $\beta$-KMS states exist only when there are $\phi$-periodic points $x\in X$, say of minimal period $p$, that are $F$-cyclic in the sense that $\sum_{i=1}^p F(\phi^i(x)) = 0$. In this case there is an $e^{\beta F}$-conformal measure $m$ concentrated on the $\phi$-orbit of $x$, and there is a closed face of $\beta$-KMS states, affinely homeomorphic to the simplex of Borel probability measures on the circle, that are all given by $m$ when restricted to $C(X)$, cf. Lemma \ref{01-08-19}. This kind of KMS states are easy to understand and they will be ignored in the rest of this introduction.

To distinguish the various types of KMS-states we study the factor types of the extremal $\beta$-KMS states, i.e. we investigate when the von Neumann algebra factor generated by the GNS representation of an extremal $\beta$-KMS state is of type $I,II_1,II_{\infty}$ or type $III$. Again this depends crucially on the solutions or lack of solutions to the cohomological equation \eqref{23-10-19}. Specifically, when $\omega$ is an extremal $\beta$-KMS state the corresponding $e^{\beta F}$-conformal measure $m$ is ergodic and the von Neumann algebra factor is isomorphic to the von Neumann algebra crossed product $L^{\infty}(m) \rtimes_{\phi} \mathbb Z$, cf. Lemma \ref{06-09-19}, which is of type $III$ if and only if there are no Borel function $h$ for which \eqref{23-10-19} holds $m$-almost everywhere, cf. Section \ref{factor} and Theorem \ref{15-08-19g}. In particular, the factor generated by $\omega$ is semi-finite if and only if there is a Borel function $h$ solving \eqref{23-10-19} $m$-almost everywhere. In this case there is $\sigma$-finite $\phi$-invariant Borel measure $\nu$ such that
\begin{equation*}\label{23-10-19c}
\mathrm{d} \nu \ = \ e^{-h} \mathrm{d} m \ ,
\end{equation*}
cf. Proposition \ref{propadd4}. Consequently $L^{\infty}(m) \rtimes_{\phi} \mathbb Z$ is finite, i.e. of type $I_n$ or type $II_1$, if and only if $e^{-h} \in L^1(m)$, and of type $I_{\infty}$ or $II_{\infty}$ otherwise. Finally, it is of type $I$ if $m$ is atomic and of type $II$ if not.

Having established a sound understanding of the rather close connection to the ergodic theory of non-singular transformations we go on to exploit this to show that all factor types occur. In fact, for any given type other than $I_n$, there is an irrational rotation of the circle and a choice of potential $F : \mathbb T \to \mathbb R$ such that the flow $\alpha^F$ on the corresponding irrational rotation algebra has a $1$-KMS states of the given type, cf. Section \ref{types}. The type $II_1$ case is easy to realize and occurs for every irrational rotation, cf. Proposition \ref{03-10-19}, while all the other cases require substantial work. Concerning the type $II_{\infty}$ and type $III$ cases this work was done by Katznelson in \cite{K} and we just translate his results. The type $I_{\infty}$ case is also difficult, but for certain irrational rotations the existence follows from work of Douady and Yoccoz, \cite{DY}. We show that there can be arbitrarily many extremal $\beta$-KMS states of type $I_{\infty}$ for any non-periodic homeomorphism provided the potential is chosen carefully, cf. Theorem \ref{thm73}. In particular, the restriction to certain irrational numbers in \cite{DY} is not necessary.

We consider next the question about the variation with $\beta$ of the simplex of $\beta$-KMS states. The connection to the ergodic theory of non-singular transformations is less helpful for this issue, but the work by Douady and Yoccoz does explicitly confront it in relation to diffeomorphisms $\phi$ of the circle when the potential is $F = \log D(\phi)$, \cite{DY}. In particular, they find in this case that there is one and only one $D(\phi)^{\beta}$-conformal measure for each $\beta \in \mathbb R$ when the total variation of $\log D(\phi)$ is finite. The uniqueness part of their proof can be adapted to show that for any irrational rotation and for any potential $F$ with finite total variation on the circle there is at most one $e^{\beta F}$-conformal measure for all $\beta \in \mathbb R$. One of our main results, Theorem \ref{18-10-19b}, shows that when $\phi$ is uniquely ergodic, the existence of any $e^{\beta F}$-conformal measure for any potential $F$ and any $\beta \neq 0$ is equivalent to the integral of $F$ with respect to the $\phi$-invariant Borel probability measure being zero, in which case they exist for all $\beta \in \mathbb R$. It follows that for a potential on the circle with bounded total variation and zero integral with respect to Lebesgue measure, the corresponding flow $\alpha^F$ on any irrational rotation algebra has one and only one $\beta$-KMS state for all $\beta \in \mathbb R$. In general, with no restriction on the total variation of $F$, there can be arbitrarily many extremal $\beta$-KMS states in this case as follows from Theorem \ref{thm73}. To show that the factor types realized by the extremal $\beta$-KMS states can vary with $\beta$ we elaborate on an example considered by Baggett, Medina and Merrill in \cite{BMM} to show that for any irrational rotation $C^*$-algebra there is a potential $F$ such that the flow $\alpha^F$ has extremal $\beta$-KMS states of type $II_1$ for all $\beta \geq 0$ while they all are of type $II_{\infty}$ or type $III$ when $\beta < 0$, cf. Proposition \ref{prop74(2)}.

Aside from indicating by examples that the simplices of $\beta$-KMS states can vary wildly with $\beta$ we leave the question about the general variation completely open. As we hope will become clear, this is a difficult question involving highly non-trivial problems in ergodic theory. The most fundamental question on this issue, however, is for which $\beta \in \mathbb R$ there are any $\beta$-KMS states at all, and this we can answer completely thanks to one of the main results, Theorem \ref{thm31}, which gives necessary and sufficient conditions for the existence of an $e^{\beta F}$-conformal measure for an arbitrary homeomorphism $\phi$ and an arbitrary potential $F$. It turns out that the set of $\beta$'s for which there is an $e^{\beta F}$-conformal measure is always one of the sets $\{0\}, \ \mathbb R, \ [0, \infty)$ or $(-\infty,0]$. All four possibilities occur, but surprisingly this is not the case when $\phi$ is minimal. In that case the set is either $\{0\}$ or $\mathbb R$, cf. Theorem \ref{Hurra}. As a result, regardless of the potential $F$, when $C(X)\rtimes_{\phi} \mathbb Z$ is simple the set of $\beta$'s for which there is a $\beta$-KMS state for the flow $\alpha^F$ is either $\{0\}$ or $\mathbb R$. 

It should be clear by now that the paper depends in large parts on work focusing on dynamical systems, and in particular the theory of non-singular transformations, and fortunately we are able to partly return the favour. By using one of our main results we can answer a question raised by Douady and Yoccoz in \cite{DY}, cf. Remark \ref{DYanswer}, and partly also a question raised by Schmidt, \cite{Sc}, cf. Remark \ref{Schmidt}. 

\smallskip

   \emph{Acknowledgement} The work was supported by the DFF-Research Project 2 `Automorphisms and Invariants of Operator Algebras', no. 7014-00145B.

\section{The flows}

Let $X$ be a compact metric space, or equivalently a second countable compact Hausdorf space, and let $\phi:X\to X$ be a homeomorphism of $X$. The crossed product $C^*$-algebra $C(X) \rtimes_{\phi} \mathbb Z$ of $(X,\phi)$ is the universal $C^*$-algebra generated by a copy of $C(X)$ and a unitary $U$ such that
\begin{equation*}
UfU^{*}=f\circ \phi^{-1} \ \text{ for all } f\in C(X) \ . 
\end{equation*} 
Given a real-valued continuous function $F : X \to \mathbb R$ there is a continuous one-parameter group $\alpha^{F}=\{\alpha^{F}_{t}\}_{t\in \mathbb{R}}$ of automorphisms of $C(X) \rtimes_{\phi} \mathbb Z$ determined by the condition that
\begin{equation*}
\alpha^{F}_{t}(fU) \ = \ f Ue^{-itF} \ = \ e^{-itF \circ \phi^{-1}} fU  
\end{equation*} 
for all $f \in C(X)$. Flows of this kind are characterized by the property that they contain $C(X)$ in the fixed point algebra, at least when $C(X)\rtimes_{\phi} \mathbb Z$ is simple; this follows from Proposition 5.2 in \cite{CT}. Until further notice we fix $X,\phi$ and $F$ and study here the innerness and approximate innerness of the flow $\alpha^F$.

\subsection{Innerness}

Recall that a flow $\alpha$ on a unital $C^*$-algebra $B$ is \emph{inner} when there is a self-adjoint element $h \in B$ such that
$$
\alpha_t(a) \ = e^{it h}ae^{-ith}
$$
for all $a \in B$ and $t \in \mathbb R$. Recall also that there is a conditional expectation $E : C(X)\rtimes_{\phi} \mathbb Z \ \to \ C(X)$ such that
$E(fU^n) \ = \ 0$ for $f \in C(X)$ when $n \neq 0$.

\begin{thm}\label{08-03-19c} The following are equivalent:
\begin{itemize}
\item[1)] The flow $\alpha^F$ is inner.
\item[2)] There exists a real-valued function $h\in C(X)$ such that $\alpha^F_{t}=\Ad e^{ith}$ for all $t\in \mathbb{R}$.
\item[3)] There is a continuous function $h : X \to \mathbb R$ such that $F =  h \circ \phi -h$. 
\end{itemize} 
\end{thm}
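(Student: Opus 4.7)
My strategy is to prove the cycle $3 \Rightarrow 2 \Rightarrow 1 \Rightarrow 3$. The implication $2 \Rightarrow 1$ is immediate from the definition, since $h$ being real-valued makes $e^{ith}$ a unitary with self-adjoint generator $h \in C(X) \subset C(X)\rtimes_\phi \mathbb Z$.

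For $3 \Rightarrow 2$, given a real-valued continuous $h$ with $F = h\circ \phi - h$, set $k = -h \in C(X)$. Both $\alpha^F_t$ and $\Ad e^{itk}$ fix $C(X)$ pointwise (the latter because $k$ lies in the commutative algebra $C(X)$), so it suffices to check agreement on $U$. Using $U^{-1}kU = k\circ\phi$ together with commutativity of $C(X)$, one computes
\eQ{
e^{itk}(fU)e^{-itk} \ = \ fU \cdot U^{-1}e^{itk}U \cdot e^{-itk} \ = \ fU\, e^{it(k\circ\phi - k)} \ = \ fU\, e^{-itF}
}
and $\alpha^F_t = \Ad e^{itk}$ follows.

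The substantive content is $1 \Rightarrow 3$. Write $\alpha^F_t = \Ad e^{itb}$ for a self-adjoint $b \in C(X)\rtimes_\phi \mathbb Z$. Since $\alpha^F_t$ fixes $C(X)$ pointwise, each $e^{itb}$ centralizes $C(X)$ and hence so does $b$; in particular $b$ commutes with $F$. The identity $\alpha^F_t(U) = Ue^{-itF}$ then rearranges to
\eQ{
U^{-1} e^{itb} U \ = \ e^{itb}e^{-itF} \ = \ e^{it(b-F)},
}
whence, by uniqueness of the generator of a norm-continuous one-parameter unitary group, $U^{-1}bU = b - F$. Now apply the canonical conditional expectation $E$: a direct check on elementary tensors gives $E(U^{-1}aU) = E(a)\circ\phi$ for every $a$, while $E(F) = F$, so the equation becomes $E(b)\circ\phi = E(b) - F$. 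Setting $h := -E(b)$—which lies in $C(X)$ and is real-valued since $E$ is $*$-preserving—yields $F = h\circ\phi - h$.

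The main obstacle is that in $1 \Rightarrow 3$ the implementing element $b$ need not itself lie in $C(X)$: the relative commutant of $C(X)$ in $C(X)\rtimes_\phi \mathbb Z$ is generally strictly larger, since any element whose $n$-th Fourier coefficient is supported on the set of $\phi^n$-fixed points commutes with $C(X)$. The cure is precisely the conditional expectation $E$, which annihilates the off-diagonal Fourier components while converting the operator identity $b - U^{-1}bU = F$ into a genuine scalar coboundary equation for $-E(b)$ inside $C(X)$.
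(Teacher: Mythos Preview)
Your proof is correct and follows essentially the same route as the paper: both arguments derive the operator identity $U^{*}bU = b - F$ (up to sign conventions) from $\alpha^F_t(U)=Ue^{-itF}$ and then apply the conditional expectation $E$ together with $E(U^{*}aU)=E(a)\circ\phi$ to land in $C(X)$. The only cosmetic difference is that you first establish $[b,C(X)]=0$ in order to combine exponentials before reading off the generator, whereas the paper simply differentiates $e^{-ith}Ue^{ith}=Ue^{-itF}$ at $t=0$; either way the substance is the same.
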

\begin{proof} 
1) $\Rightarrow$ 3): Let $h \in C(X)\rtimes_{\phi} \mathbb Z$ be a self-adjoint element such that $\alpha^F_t = \Ad e^{-ith}$. Then
\begin{equation*}
e^{-ith}Ue^{ith}=\alpha^{F}_{t}(U)=Ue^{-itF} 
\end{equation*}
and differentiation at $t =0$ shows that
$$
-ihU  + i Uh = - i UF\ ,
$$
which implies that $F \ = \ U^*hU - h$. By checking on elements of the form $fU^{n}$ we find that $E(U^{*}xU)=E(x)\circ\phi$. Hence $F=E(h)\circ\phi-E(h)$. 

3) $\Rightarrow$ 2) follows from observing that when $h\in C(X)$ is real-valued and $h\circ \phi-h=F$ then $\alpha^F_t = \Ad e^{-ith}$.
This completes the proof because the implication 2) $\Rightarrow$ 1) is trivial.
\end{proof}

When $\phi$ is minimal, in the sense that all $\phi$-orbits are dense in $X$, the three conditions in Theorem \ref{08-03-19c} are equivalent to the following.
\begin{itemize}
\item[4)] There is an element $x \in X$ such that 
$$
\sup_{n \in \mathbb N} \left| \sum_{k=0}^nF\left(\phi^k(x)\right)\right| \ < \ \infty \ .
$$
\end{itemize} 
This follows from a famous result of Gottschalk and Hedlund, \cite{GH}. 

\subsection{Approximate innerness}

Recall that a flow $\alpha = \left(\alpha_t\right)_{t \in \mathbb R}$ on a unital $C^*$-algebra $B$ is \emph{approximately inner} when there is a sequence $\left\{\alpha^n\right\}_{n=1}^{\infty}$ of inner flows on $B$ such that $\lim_{n \to \infty} \left\|\alpha_t(a) - \alpha^n_t(a)\right\| = 0$ uniformly on compact subsets of $\mathbb R$ for all $a \in B$, cf. \cite{PS}.

\begin{thm}\label{thm41} The following are equivalent:
\begin{itemize}
\item[1)] The flow $\alpha^F$ is approximately inner.

\item[2)] $\int_X F \ \mathrm{d}\nu = 0$ for all $\phi$-invariant Borel probability measures $\nu$.

\item[3)] There is a sequence $\{h_n\}$ of continuous real-valued functions on $X$ such that $\lim_{n \to \infty} h_n \circ \phi - h_n = F$ uniformly on $X$.
\end{itemize}
\end{thm}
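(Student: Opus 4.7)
The plan is to close the cycle $3) \Rightarrow 1) \Rightarrow 2) \Rightarrow 3)$.

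For $3) \Rightarrow 1)$: each $\alpha^{F_n}$ with $F_n := h_n\circ\phi - h_n$ is inner by \refthm{08-03-19c}, so it suffices to show $\alpha^{F_n}_t(a) \to \alpha^F_t(a)$ uniformly on compact $t$-sets for each $a \in C(X)\rtimes_\phi\mathbb{Z}$. On a monomial $fU^k$ the difference equals $fU^k(e^{-it\sigma_k F_n} - e^{-it\sigma_k F})$ where $\sigma_k G := \sum_{j=0}^{k-1} G\circ\phi^j$, and the elementary bound $|e^{-ita} - e^{-itb}| \leq |t|\,|a-b|$ together with the uniform convergence $F_n \to F$ controls this. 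Extension to arbitrary $a$ follows from density of finite sums of monomials and the norm bound $\|\alpha^G_t\| = 1$.

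For $2) \Rightarrow 3)$ I apply Hahn-Banach in $C(X,\mathbb{R})$. Let $B := \{h\circ\phi - h : h \in C(X,\mathbb{R})\}$ and suppose, for contradiction, that $F \notin \overline{B}$ in sup norm. Then there exists a nonzero signed Borel measure $\mu$ on $X$ with $\mu|_B = 0$ and $\mu(F) \neq 0$; the former condition is exactly $\phi$-invariance of $\mu$ as a signed measure. Since $\phi$ is a homeomorphism, $\phi_*\mu_+$ and $\phi_*\mu_-$ remain mutually singular, so uniqueness of the Jordan decomposition forces each of $\mu_\pm$ to be $\phi$-invariant; normalising gives $\phi$-invariant probability measures $\nu_\pm$ and $a_\pm \geq 0$ with $\mu = a_+\nu_+ - a_-\nu_-$. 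Hypothesis 2) then yields $\mu(F) = a_+\int F\,d\nu_+ - a_-\int F\,d\nu_- = 0$, contradicting $\mu(F) \neq 0$. (The implication $3) \Rightarrow 2)$ is immediate by integrating $h_n\circ\phi - h_n$ against any $\phi$-invariant probability measure.)

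For $1) \Rightarrow 2)$, fix a $\phi$-invariant Borel probability $\nu$ and set $\tau_\nu(a) := \int_X E(a)\,d\nu$; $\phi$-invariance of $\nu$ ensures that $\tau_\nu$ is a tracial state on $C(X)\rtimes_\phi\mathbb{Z}$. Since every inner flow preserves every tracial state and this property survives the uniform norm-limit, $\tau_\nu \circ \alpha^F_t = \tau_\nu$ for all $t$. Fix an approximating sequence of inner flows $\Ad(e^{ith_n}) \to \alpha^F_t$. Applying the contractive conditional expectation $E$ to the convergence $e^{ith_n}Ue^{-ith_n}U^{-1} \to \alpha^F_t(U)U^{-1} = e^{-itF\circ\phi^{-1}}$ (uniform in $t$ on compact sets) yields uniform convergence in $C(X)$. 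Writing $h_n = \sum_k h_n^{(k)}U^k$ in the Fourier grading $C(X)\rtimes_\phi\mathbb{Z} = \overline{\bigoplus_k C(X)U^k}$ one has $E(h_n) = h_n^{(0)}$ and $E(Uh_nU^{-1}) = E(h_n)\circ\phi^{-1}$, so the formal first-order-in-$t$ part of the left side is $i(E(h_n) - E(h_n)\circ\phi^{-1})$, while the first-order part of the right side is $-iF\circ\phi^{-1}$. Forcing the two first-order terms to match (which is the subtle point) gives $E(h_n) - E(h_n)\circ\phi \to F$ uniformly; setting $g_n := -E(h_n) \in C(X,\mathbb{R})$ yields $g_n\circ\phi - g_n \to F$ uniformly, and integrating against $\nu$ then gives $\int F\,d\nu = 0$. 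This construction in fact simultaneously produces the sequence required by condition 3).

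The main obstacle is the first-order matching in $1) \Rightarrow 2)$. Uniform $C^0$-convergence of the $t$-family of unitaries $e^{ith_n}Ue^{-ith_n}U^{-1}$ does not in general produce convergence of first $t$-derivatives at $t=0$, and the norms $\|h_n\|$ of the approximating self-adjoint elements may be unbounded. The rigidity comes from two combined features: trace-cyclicity forces automatic first-order cancellation of multiplicative commutators of unitaries (so the $\tau_\nu$-trace $\xi_n(t) := \tau_\nu(e^{ith_n}Ue^{-ith_n}U^{-1})$ satisfies $\xi_n'(0) = 0$ for all $n$), while the gauge/Fourier grading concentrates the genuine obstruction in the degree-zero part $E(h_n)$. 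Extracting the matching should follow by a careful passage, e.g.\ by integrating the $C(X)$-valued approximation against a suitable smooth cut-off in $t$, or by passing to the $L^2(\tau_\nu)$-geometry of the GNS representation of $\tau_\nu$.
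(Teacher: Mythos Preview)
Your implications $3)\Rightarrow 1)$ and $2)\Rightarrow 3)$ are both correct. The Hahn--Banach argument for $2)\Rightarrow 3)$ is a genuinely different route from the paper's: the authors argue constructively, first showing that condition 2) forces the ergodic averages $\frac{1}{n}\sum_{i=0}^{n-1}F\circ\phi^i$ to converge uniformly to $0$ (via a weak$^*$ compactness argument on empirical measures), and then setting $h_n=-\frac{1}{n}\sum_{j=1}^{n}\sum_{i=0}^{j-1}F\circ\phi^i$ so that $h_n\circ\phi-h_n=F-\frac{1}{n}\sum_{i=1}^{n}F\circ\phi^i$. Your functional-analytic argument is shorter and slicker; the paper's has the advantage of producing an explicit approximating sequence.

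The gap is exactly where you locate it: $1)\Rightarrow 2)$ is not proved. You correctly diagnose that uniform convergence of $t\mapsto e^{ith_n}Ue^{-ith_n}U^{-1}$ to $e^{-itF\circ\phi^{-1}}$ does \emph{not} yield convergence of the $t$-derivatives at $0$, and since $\|h_n\|$ is typically unbounded no soft argument will extract $E(h_n)-E(h_n)\circ\phi\to F$. Your observation that $\tau_\nu\circ\alpha^F_t=\tau_\nu$ is true but empty here, since $F\in C(X)$ is already $\alpha^F$-fixed. The suggestions in your final paragraph remain vague, and the ``first-order matching'' you want simply need not hold.

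The paper resolves precisely this obstacle with the de la Harpe--Skandalis determinant. One passes to the $C^*$-algebra $B$ of continuous paths $[0,1]\to C(X)\rtimes_\phi\mathbb Z$ starting at a scalar, so that $K_0(B)=\mathbb Z$ and the determinant $\Delta_{\overline\tau}:U_0(B)\to\mathbb R/\mathbb Z$ associated to $\overline\tau(f)=\int_0^1\tau_\nu(f(s))\,ds$ is \emph{norm-continuous} on $U_0(B)$. For the paths $\overline F(s)=stF$, $\overline H_n(s)=stH_n$, $\overline K_n(s)=stU^*H_nU$ one has $\Delta_{\overline\tau}(e^{i\overline K_n})=-\Delta_{\overline\tau}(e^{-i\overline H_n})$ by the trace property, so $\Delta_{\overline\tau}(e^{i\overline K_n}e^{-i\overline H_n})=0$; on the other hand $e^{i\overline K_n}e^{-i\overline H_n}\to e^{-i\overline F}$ in norm, and continuity of $\Delta_{\overline\tau}$ then forces $-\frac{t}{4\pi}\int_X F\,d\nu\in\mathbb Z$ for every $t\in[0,1]$, hence $\int_X F\,d\nu=0$. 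The determinant is exactly the device that extracts integrated first-order information from mere $C^0$-convergence of unitaries, bypassing the derivative problem you identified.
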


\begin{proof} $1) \Rightarrow 2)$: Let $\{H_n\}$ be a sequence of self-adjoint elements of $C(X) \rtimes_{\phi} \mathbb Z$ such that $\lim_{n \to \infty} e^{itH_n}ae^{-itH_n} = \alpha^F_t(a)$ uniformly on compact subsets of $\mathbb R$ for all $a \in C(X) \rtimes_{\phi} \mathbb Z$. Let $\nu$ be a $\phi$-invariant Borel probability measure on $X$ and let $\tau$ be the corresponding tracial state on $C(X) \rtimes_{\phi} \mathbb Z$, i.e. for $f \in C(X)$ and $z \in \mathbb Z$ we have that
\begin{equation*}
\tau\left( fU^z\right) = \begin{cases} \int_X f \ \mathrm{d}\nu \ , & \ z = 0 \\ 0 \ ,  & \ z \neq 0 \ . \end{cases} 
\end{equation*}
We can choose $N \in \mathbb N$ so big that
\begin{equation*}\label{18-03-19}
\left\| e^{-iFt} -  U^*e^{itH_n}Ue^{-itH_n}\right\| \ = \ \left\| \alpha^F_t(U) -   e^{itH_n}Ue^{-itH_n}\right\| \  < \ 1
\end{equation*}
when $|t| \leq 1$ and $n \geq N$. Fix $t \in [0,1]$ and let $B$ denote the $C^*$-algebra of continuous function $f : [0,1] \to C(X) \rtimes_{\phi} \mathbb Z$ with the property that $f(0) \in \mathbb C 1$. Let $n \geq N$. Define $\overline{F}, \overline{H_n}$ and $\overline{K_n}$ in $B$ such that
\begin{itemize}
\item $\overline{F}(s) = stF$,
\item $\overline{H_n}(s) = st H_n$, and
\item $\overline{K_n}(s) = st U^*H_nU$
\end{itemize}  
for $s \in [0,1]$. Then
$$
\left\| e^{-i \overline{F}} - e^{i \overline{K_n}}e^{-i \overline{H_n}} \right\|  \ = \ \sup_{0 \leq s \leq t} \left\| \alpha^F_s(U) -   e^{isH_n}Ue^{-isH_n}\right\| 
$$
and hence 
\begin{equation*}
\left\| e^{-i \overline{F}} - e^{i \overline{K_n}}e^{-i \overline{H_n}} \right\|  \ < \ 1  \ ,
\end{equation*}
for $n \geq N$ and 
$$
\lim_{n \to \infty} \left\| e^{-i \overline{F}} - e^{i \overline{K_n}}e^{-i \overline{H_n}} \right\|  \ = \ 0 \ .
$$
This implies that there are self-adjoint elements $A_n \in B$, $n \geq N$, such that $\lim_{n \to \infty} A_n = 0$ and 
\begin{equation*}
e^{-i \overline{F}} = e^{iA_n} e^{i \overline{K_n}}e^{-i \overline{H_n}}  \ 
\end{equation*}
for all $n \geq N$. Let $\overline{\tau}$ be the tracial state on $B$ defined such that
\begin{equation*}
\overline{\tau}(f) \ = \ \int_0^1 \tau(f(s)) \ \mathrm{d}s \ 
\end{equation*}
and let 
\begin{equation*}
\Delta_{\overline{\tau}} : \ U_0(B) \ \to \ \mathbb R/\overline{\tau}\left(K_0(B)\right) 
\end{equation*}
be the corresponding determinant as defined by delaHarpe-Skandalis in \cite{dHS}. Note that
\begin{align*}
&\Delta_{\overline{\tau}} \left(e^{i \overline{K_n}}\right) \ = \ \frac{1}{2\pi} \int_0^1 st \tau(U^*H_nU) \ \mathrm{d} s \\ 
& = \ \frac{1}{2\pi} \int_0^1 st \tau(H_n) \ \mathrm{d} s  \ = \ - \Delta_{\overline{\tau}} \left(e^{-i \overline{H_n}}\right) 
\end{align*}
modulo $\overline{\tau}\left(K_0(B)\right) $. Since $\Delta_{\overline{\tau}}$ is a homomorphism it follows that
\begin{align*}
& -\frac{t}{4\pi} \int_X F \ \mathrm{d} \nu \ = \  -\frac{1}{2\pi} \int_0^1 st \int_X F \ \mathrm{d} \nu \ \mathrm{d} s \\
& = \Delta_{\overline{\tau}}\left(e^{-i \overline{F}}\right)  \ = \Delta_{\overline{\tau}}\left(e^{i A_n}\right) = \frac{1}{2\pi} \overline{\tau}(A_n)
\end{align*}
modulo $\overline{\tau}\left(K_0(B)\right) $. Since $K_0(B) = \mathbb Z$ with the generator represented by the unit $1 \in B$ we have that $\overline{\tau}\left(K_0(B)\right) = \mathbb Z$. 
Thus
$$
 -\frac{t}{4\pi} \int_X F \ \mathrm{d} \nu \ = \ \frac{1}{2\pi} \overline{\tau}(A_n) 
 $$ modulo $\mathbb Z$. Since $\lim_{n \to \infty} \frac{1}{2\pi} \overline{\tau}(A_n)  = 0$, this implies that $ -\frac{t}{4\pi} \int_X F \ \mathrm{d} \nu \in \mathbb Z$. As this conclusion holds for all $t \in [0,1]$ it follows that $\int_X F \ \mathrm{d} \nu \ = \ 0$.


 $2) \Rightarrow 3)$: We claim that the sequence
\begin{equation*}
\frac{1}{n}\sum_{i=0}^{n-1} F \circ \phi^i \ , \ n = 1,2,3, \cdots 
\end{equation*}
converges uniformly to $0$. Assume not. There is then an $\epsilon > 0$ and sequences $N_1 < N_2 < N_3 < \cdots$ in $\mathbb N$ and $\{x_i\}_{i=1}^{\infty}$ in $X$ such that 
\begin{equation*}
\left|\frac{1}{N_k}\sum_{i=0}^{N_k-1} F \circ \phi^i(x_k)\right| \ \geq \ \epsilon 
\end{equation*}
for all $k$. Let $\delta_y$ denote the Dirac measure at $y$. For each $k$ we consider the measure
\begin{equation*}
\nu_k = \frac{1}{N_k}\sum_{i=0}^{N_k-1} \delta_{\phi^i(x_k)} \ .
\end{equation*}
Any weak* condensation point of the sequence $\{\nu_k\}_{k=1}^{\infty}$ in the compact set of Borel probability measures on $X$ is a $\phi$-invariant Borel probability measure $\nu$ such that $\left|\int_X F \ \mathrm{d}\nu\right| \geq \epsilon$, contradicting 2) and proving the claim. Set
\begin{equation*}
h_n =  -\frac{1}{n} \sum_{j=1}^n \sum_{i=0}^{j-1} F \circ \phi^i \ ,
\end{equation*}
and note that 
\begin{equation*}
h_n\circ \phi - h_n  \ = \ F \ - \ \frac{1}{n} \sum_{i=1}^{n} F \circ \phi^i \ .
\end{equation*}
It follows from the claim above that $\lim_{n \to \infty} h_n\circ \phi - h_n \ = \ F$ uniformly on $X$.

$3) \Rightarrow 1)$: This follows because
\begin{equation*}
\lim_{n \to \infty} e^{-ith_n} a e^{ith_n} \ - \ \alpha^F_t(a) \ = \ 0
\end{equation*}
uniformly on compact subsets of $\mathbb R$ for all $a \in C(X)\rtimes_{\phi} \mathbb Z$.

\end{proof}

\section{KMS states of $\alpha^F$ and conformal measures} \label{section2}


Let $\beta \in \mathbb R$. Following \cite{BR} a state $\omega$ on $C(X)\rtimes_{\phi}\mathbb{Z}$ is a $\beta$-KMS state for $\alpha^{F}$ when
\begin{equation*}
\omega(ab)=\omega(b\alpha_{i\beta}^{F}(a))
\end{equation*}
for all $a$ and $b$ in a dense $\alpha^{F}$-invariant $*$-algebra of $\alpha^{F}$-analytic elements in $C(X)\rtimes_{\phi} \mathbb{Z}$. To describe these KMS states we need the notion of a conformal measure with respect to $\phi$; a notion first introduced in dynamical systems by Patterson, \cite{P}, and Sullivan, \cite{S}. In a generality exceeding what we need here the notion was coined by Denker and Urbanski in \cite{DU}. For the present purposes it is convenient to define an \emph{$e^{\beta F}$-conformal measure} to be a Borel probability measure $m$ on $X$ with the property that
\begin{equation*} \label{eqconf}
m(\phi(B))=\int_{B}e^{\beta F(x)} \ \mathrm{d} m (x)
\end{equation*} 
for all Borel sets $B\subseteq X$. In other words, $m\circ \phi$ is absolutely continuous with respect to $m$ with Radon-Nikodym derivative $e^{\beta F}$, or
\begin{equation} \label{eqint}
\int_{X} f \ \mathrm{d}m=\int_{X} f\circ \phi \ e^{\beta F} \ \mathrm{d}m 
\end{equation}
for all $f\in C(X)$. We note that $m$ is $e^{\beta F}$-conformal for $\phi$ if and only if it is $e^{-\beta F \circ \phi^{-1}}$-conformal for $\phi^{-1}$.


\begin{lemma} \label{lemma21}
If $\omega$ is a $ \beta$-KMS state for $\alpha^{F}$ the restriction of $\omega$ to $C(X)$ defines an $e^{\beta F}$-conformal measure $m_{\omega}$ on $X$.
\end{lemma}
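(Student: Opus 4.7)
The plan is to unwrap the KMS condition on a cleverly chosen pair of analytic elements and read off the conformality identity. First I would invoke the Riesz representation theorem on the positive linear functional $f \mapsto \omega(f)$, $f \in C(X)$, to produce a Borel probability measure $m_\omega$ on $X$ with $\omega(f) = \int_X f\, \mathrm{d} m_\omega$. The task then reduces to verifying
\[
\int_X f \, \mathrm{d} m_\omega \ = \ \int_X (f\circ \phi) \, e^{\beta F} \, \mathrm{d} m_\omega
\]
for every $f \in C(X)$, which is the form of $e^{\beta F}$-conformality recorded in \eqref{eqint}.

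To extract this identity I would pick the analytic element $a = fU$ and the element $b = U^*$, both of which lie in the natural dense $*$-subalgebra of analytic elements (the polynomial algebra in $U$, $U^*$ with coefficients in $C(X)$): the definition $\alpha^F_t(fU) = fUe^{-itF}$ extends entirely to a $C(X)\rtimes_\phi \mathbb{Z}$-valued entire function of $t$, so evaluation at $t = i\beta$ gives
\[
\alpha^F_{i\beta}(fU) \ = \ fUe^{\beta F}.
\]
Then $ab = fUU^* = f$, so the left-hand side of the KMS relation $\omega(ab)=\omega(b\alpha^F_{i\beta}(a))$ is simply $\omega(f) = \int_X f\, \mathrm{d} m_\omega$. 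For the right-hand side I would use the defining relation $U^* g U = g\circ \phi$ for $g \in C(X)$ to rewrite
\[
b\,\alpha^F_{i\beta}(a) \ = \ U^* f U e^{\beta F} \ = \ (f\circ \phi)\, e^{\beta F},
\]
which lies in $C(X)$, so that $\omega(b\alpha^F_{i\beta}(a)) = \int_X (f\circ \phi)\, e^{\beta F}\, \mathrm{d} m_\omega$. Combining the two sides gives exactly \eqref{eqint}.

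There is no real obstacle here; the only point requiring any attention is confirming that the polynomial algebra in $U$, $U^*$ over $C(X)$ is indeed a dense $\alpha^F$-invariant $*$-algebra of analytic elements for which the KMS condition applies, but this is immediate from $\alpha^F_t(fU^n) = fU^n e^{-it F_n}$ (with $F_n = \sum_{k=0}^{n-1} F\circ \phi^{-k}$ when $n \geq 0$, and the analogous expression for negative $n$), which is manifestly entire in $t$. Once conformality on continuous functions is established, a standard monotone class / Riesz argument extends it to $m_\omega(\phi(B)) = \int_B e^{\beta F}\, \mathrm{d} m_\omega$ for every Borel set $B \subseteq X$, completing the proof.
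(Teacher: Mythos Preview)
Your argument is correct and is essentially the same as the paper's: both apply the KMS condition to a pair of elements of the form (function)$\cdot U$ and $U^*$ and use $U^* g U = g\circ\phi$ together with $\alpha^F_{i\beta}(U)=Ue^{\beta F}$ to obtain $\omega(f)=\omega((f\circ\phi)e^{\beta F})$. The only cosmetic difference is that the paper takes $a=U$, $b=(f\circ\phi)U^*$ whereas you take $a=fU$, $b=U^*$; your additional remarks on analyticity and the passage to Borel sets are fine but not needed given that \eqref{eqint} is the working definition of conformality.
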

\begin{proof} For any $f\in C(X)$,
\begin{align*}
\omega(f) &= \omega(U (f\circ \phi) U^{*}) = \omega((f\circ \phi) U^{*}\alpha^{F}_{i\beta}(U))\\
& = \omega((f\circ \phi) U^{*}Ue^{\beta F}) = \omega((f\circ \phi) e^{\beta F}),
\end{align*}
which proves the lemma.
\end{proof}

\begin{lemma}\label{lemma22} The map $\omega \mapsto m_{\omega}$ of Lemma \ref{lemma21} from $\beta$-KMS states for $\alpha^F$ to the $e^{\beta F}$-conformal measures for $\phi$ is surjective. Specifically, when $m$ is an $e^{\beta F}$-conformal measure there is a $\beta$-KMS states $\omega_m$ for $\alpha^F$ such that
$$
\omega_m(a) = \int_X E(a) \ \mathrm{d} m \ 
$$
for all $a \in C(X)\rtimes_{\phi} \mathbb Z$.
\end{lemma}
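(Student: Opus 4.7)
The plan is to define $\omega_m(a) := \int_X E(a)\, \mathrm{d}m$ directly and verify the two requirements: that this is a state, and that it satisfies the KMS condition for $\alpha^F$ at inverse temperature $\beta$.

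First I would check that $\omega_m$ is a state. Unitality follows from $E(1)=1$ and $m(X)=1$. Positivity follows because $E$ is a positive (indeed, completely positive) conditional expectation onto $C(X)$, so $E(a^*a)$ is a non-negative continuous function and its integral against $m$ is non-negative.

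The main step is the KMS condition. I would work on the dense $*$-subalgebra $\mathcal A$ of finite Laurent polynomials $\sum_{n} f_n U^n$ with $f_n\in C(X)$, which is $\alpha^F$-invariant and consists of entire analytic elements. A short computation using $UgU^*=g\circ\phi^{-1}$ gives
\begin{equation*}
\alpha^F_z(fU^n) \ = \ fU^n\, e^{-iz S_n F}, \qquad S_n F := \sum_{k=0}^{n-1} F\circ\phi^k,
\end{equation*}
for $n \ge 1$, with the analogous formula for $n<0$. In particular $\alpha^F_{i\beta}(fU^n)=fU^n e^{\beta S_n F}$. By bilinearity it suffices to check the KMS identity on $a=fU^n$, $b=gU^m$. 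Since $E(hU^k)=0$ for $k\neq 0$, both $\omega_m(ab)$ and $\omega_m(b\alpha^F_{i\beta}(a))$ vanish unless $m+n=0$; in the nontrivial case $m=-n$ with $n\ge 1$ the identity to check becomes
\begin{equation*}
\int_X f\cdot(g\circ\phi^{-n})\,\mathrm{d}m \ = \ \int_X g\cdot (f\circ\phi^n)\, e^{\beta S_n F}\,\mathrm{d}m.
\end{equation*}

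To establish this, I would iterate the defining relation \eqref{eqint} of the $e^{\beta F}$-conformal measure to obtain the higher-order version $\int h\,\mathrm{d}m = \int (h\circ\phi^n)\,e^{\beta S_n F}\,\mathrm{d}m$ for every $h\in C(X)$ and $n\ge 1$ (proved by induction, using $h_1 := (h\circ\phi)e^{\beta F}$ at each step). Applying this with $h = f\cdot(g\circ\phi^{-n})$ and observing that $h\circ\phi^n = (f\circ\phi^n)\cdot g$ yields exactly the required equality. The case $n\le -1$ is handled identically using that $m$ is $e^{-\beta F\circ\phi^{-1}}$-conformal for $\phi^{-1}$, as noted just before the statement. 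The KMS condition on $\mathcal A$ then extends to all analytic elements in the closure by continuity, so $\omega_m$ is a $\beta$-KMS state with $m_{\omega_m}=m$.

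I do not anticipate a real obstacle here; the only point requiring care is the bookkeeping of the exponents $S_n F$ and the signs/exponents when passing from $\phi$ to $\phi^{-1}$. Once the iterated conformal identity is recorded, the KMS check reduces to a one-line comparison.
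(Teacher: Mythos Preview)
Your argument is correct and essentially identical to the paper's: both verify the KMS identity on the spanning set $a=fU^n$, $b=gU^{-n}$ by reducing it to the iterated conformal relation $\int_X h\,\mathrm{d}m = \int_X (h\circ\phi^n)\,e^{\beta S_nF}\,\mathrm{d}m$, with the case $n<0$ handled symmetrically. You are slightly more explicit about verifying that $\omega_m$ is a state and about the extension by continuity, but there is no substantive difference.
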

\begin{proof} Note that $E(fU^{n}gU^{m})=0$ when $m\neq -n$ and that for $n>0$ then  
\begin{align*}
&\omega_{m}(fU^{n}gU^{-n})=\int_{X}f (g\circ \phi^{-n})\ \mathrm{d}m 
=\int_{X}(f\circ \phi^{n})g\exp(\beta \sum_{i=0}^{n-1} F\circ \phi^{i}) \ \mathrm{d}m \\
&=\omega_{m}( g U^{-n} f U^{n}\exp(\beta \sum_{i=0}^{n-1} F\circ \phi^{i})) 
=\omega_{m}( g U^{-n} \alpha_{i\beta}^{F}(f U^{n})) \ .
\end{align*}
Carrying out a similar calculation for $n<0$ it follows that $\omega_m$ is a $\beta$-KMS state. Since $m_{\omega_m} = m$ this shows that the map from Lemma \ref{lemma21} is surjective.
\end{proof}

In particular, if there is a $\beta$-KMS state for $\alpha^F$ there is also one which factorizes through $E$. When $C(X)\rtimes_{\phi} \mathbb Z$ is simple the flows we consider in this paper are characterized by the possible existence of a $\beta$-KMS state with this property for some $\beta \neq 0$, cf. Proposition 5.3 in \cite{CT}. Remark that setting $f=1$ in \eqref{eqint} gives us the following informative constraint on the behaviour of $F$ when there exists $\beta$-KMS states for $\beta \neq 0$.

\begin{cor}\label{cor35}  Assume that $F(x) > 0$ for all $x \in X$ or that $F(x) < 0$ for all $x \in X$. There are no $\beta$-KMS states for $\alpha^{F}$ when $\beta \neq 0$.
\end{cor}

In general the map $\omega \mapsto m_{\omega}$ is not injective and hence not all $\beta$-KMS states are diagonal in the sense of \cite{CT}, i.e. they do not all factorize through the conditional expectation $E$. We proceed with the identification of the $\beta$-KMS states that are not diagonal and obtain in this way also a necessary and sufficient condition for the injectivity of the map from Lemma \ref{lemma21}. For this purpose the following example will be useful.

\begin{example}\label{31-07-19a} Consider the case where $X$ is the finite set $X = \{1,2,\cdots, p\}$ and let $\sigma$ be the cyclic permutation:
$$
\sigma(j) = \begin{cases} j+1, \ & j \leq p-1 \ ,  \\ 1, & \ j = p \ . \end{cases}
$$
When $F : \{1,2,\cdots, p\} \to \mathbb R$ is a function there are clearly no $e^{\beta F}$-conformal measures for $\beta \neq 0$ unless 
\begin{equation}\label{31-07-19}
\sum_{i=1}^p F(i) = 0 \ ,
\end{equation}
and hence no $\beta$-KMS states for $\alpha^F$ on $C(\{1,2,\cdots, p\}) \rtimes_{\sigma} \mathbb Z$ when $\beta \neq 0$ unless \eqref{31-07-19} holds. Assume therefore that \eqref{31-07-19} holds. It is well-known that the crossed product $C(X) \rtimes_\sigma \mathbb Z $ is a copy of $C(\mathbb T) \otimes M_p(\mathbb C) =  C(\mathbb T , M_p(\mathbb C))$. To describe an isomorphism explicitly let $\{e_{i,j}\}_{i,j =1}^p$ be the standard matrix units in $M_p(\mathbb C)$. An isomorphism $\pi :C(X) \rtimes_\sigma \mathbb Z  \to C(\mathbb T , M_p(\mathbb C))$ is then obtained by setting
$$
\pi(f)(z) = \sum_{j=1}^p f(j)e_{j,j}
$$
when $f \in C(X)$ and 
$$
\pi(U)(z) \ = \  z e_{1,p} \ + \ \sum_{j=1}^{p-1} e_{j+1,j} 
$$
for all $z \in \mathbb T$. Since we assume that \eqref{31-07-19} holds the self-adjoint matrix
$$
H = - \sum_{j=2}^p \left(\sum_{k=1}^{j-1} F(k)\right)e_{j,j} \in M_p(\mathbb C)
$$
will have the property that $\pi \circ \alpha^F_t(a) = e^{itH}\pi(a)e^{-itH}$ for all $a \in C(X) \rtimes_\sigma \mathbb Z$. Thus the set of $\beta$-KMS states for $\alpha^F$ on $C(X) \rtimes_\sigma \mathbb Z$ is affinely homeomorphic via $\pi$ to the set of $\beta$-KMS states for $\id_{C(\mathbb T)} \otimes \Ad e^{itH}$ on $C(\mathbb T)\otimes M_p(\mathbb C)$ and the latter set is easily described: Let $\tau_{\beta}$ be the $\beta$-KMS state for $\Ad e^{itH}$ on $M_p(\mathbb C)$ given by
$$
\tau_{\beta}(x)  = \frac{\Tr  \left(e^{-\beta H}x\right)}{\Tr  \left(e^{-\beta H}\right)}  \ .
$$
For every Borel probability measure $m$ on $\mathbb T$ we can define a $\beta$-KMS state $\omega_m$ on $C(\mathbb T)\otimes M_p(\mathbb C)$ given on simple tensors by the formula
$$
\omega_m(f \otimes x) \ = \ \tau_{\beta}(x) \int_{\mathbb T} f \ \mathrm{d}m  \ .
$$
It is then straightfoward to see that the map $m \mapsto \omega_m \circ \pi$ gives an affine homeomorphism between the  simplex of Borel probability measures on $\mathbb T$ and the simplex of $\beta$-KMS states for $C(X)\rtimes_{\sigma} \mathbb Z$.

\end{example}

We return to the general case.

\begin{lemma}\label{31-07-19c} Let $\omega$ be a $\beta$-KMS state for $\alpha^F$ and let $(H,\pi_{\omega},u)$ be the GNS representation of $\omega$. Then $\pi_{\omega} : C(X) \to B(H)$ extends to a $*$-homomorphism $\overline{\pi}_{\omega} : L^{\infty}(m_{\omega}) \to \pi_{\omega}\left( C(X) \rtimes_{\phi} \mathbb Z\right)''$ with the following properties:
\begin{itemize}
\item $\pi_{\omega}(U) \overline{\pi}_{\omega}(f)\pi_{\omega}(U^*) = \overline{\pi}_{\omega}(f \circ \phi^{-1}), \ f \in L^{\infty}(m_{\omega})$, 
\item $\left<\overline{\pi}_{\omega}(f) u,u\right> = \int_X f \ \mathrm{d}m_{\omega} , \ f \in L^{\infty}(m_{\omega})$, and
\item when $\{h_n\}$ is a uniformly norm-bounded sequence in $L^{\infty}(m_{\omega})$ converging $m_{\omega}$-almost everywhere to $h \in L^{\infty}(m_{\omega})$, then 
$$
\lim_{n\to \infty} \overline{\pi}_{\omega}(h_n) = \overline{\pi}_{\omega}(h)
$$ 
in the strong operator topology.
\end{itemize} 
\end{lemma}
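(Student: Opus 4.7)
The plan is to construct $\overline{\pi}_\omega$ by approximating $L^\infty(m_\omega)$-functions with continuous ones and taking strong-operator limits of $\pi_\omega(f_n)$. I set $M = \pi_\omega(C(X))''$ and $N = \pi_\omega(C(X)\rtimes_\phi \mathbb Z)''$. A standard consequence of the KMS condition is that $u$ is simultaneously cyclic and separating for $N$; in particular it is separating for the subalgebra $M$, and it is cyclic for $N'$, hence for $M' \supset N'$. The normal state $\psi(x) = \langle xu,u\rangle$ is therefore faithful on $M$, and by Lemma \ref{lemma21} its pullback along $\pi_\omega|_{C(X)}$ is integration against $m_\omega$.

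Given $f \in L^\infty(m_\omega)$ of norm at most $K$, I would pick $\{f_n\} \subset C(X)$ with $\|f_n\|_\infty \leq K$ and $f_n \to f$ $m_\omega$-a.e.\ using regularity of $m_\omega$ together with Lusin's theorem. Dominated convergence gives $\|\pi_\omega(f_n)u - \pi_\omega(f_m)u\|^2 = \int |f_n - f_m|^2 \ \mathrm{d} m_\omega \to 0$, so the sequence $\pi_\omega(f_n)u$ is Cauchy. For any $y \in M'$ the identity $\pi_\omega(f_n)(yu) = y\pi_\omega(f_n)u$ shows convergence at $yu$, and combined with density of $M'u$ and the uniform bound $\|\pi_\omega(f_n)\|\leq K$, a routine $\varepsilon/3$ argument upgrades this to strong-operator convergence to an element $\overline{\pi}_\omega(f) \in M \subset N$. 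Faithfulness of $\psi$ and the same Cauchy calculation show the limit depends only on the $m_\omega$-class of $f$. Repeating the argument with $\{h_n\}$ in place of $\{f_n\}$ immediately delivers the third bulleted property.

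That $\overline{\pi}_\omega$ is a $*$-homomorphism will follow from joint SOT-continuity of multiplication on bounded sets together with WOT-continuity of the adjoint. The vector-state identity $\langle\overline{\pi}_\omega(f)u,u\rangle = \int_X f \ \mathrm{d} m_\omega$ is immediate by passing to the SOT-limit in the $C(X)$ case. For covariance, $e^{\beta F}$-conformality of $m_\omega$ forces $m_\omega \circ \phi^{-1}$ to be mutually absolutely continuous with $m_\omega$, so $f_n \circ \phi^{-1} \to f \circ \phi^{-1}$ $m_\omega$-a.e.; passing to the strong-operator limit in the identity $\pi_\omega(U)\pi_\omega(f_n)\pi_\omega(U^*) = \pi_\omega(f_n \circ \phi^{-1})$ then yields the required relation.

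The main obstacle will be the SOT-convergence step: upgrading Cauchy-ness of $\pi_\omega(f_n)u$ to strong-operator convergence of $\pi_\omega(f_n)$ on all of $H$. This hinges on density of $M'u$, which in turn hinges on $u$ being separating for $N$, and it is the one place where the KMS hypothesis is essential; without it one would be stuck with convergence only on the cyclic orbit of $u$ under $M$.
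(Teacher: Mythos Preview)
Your argument is correct; the difference from the paper lies in how SOT convergence of $\{\pi_\omega(f_n)\}$ is obtained. Rather than invoking that $u$ is separating for $N$ and then exploiting density of $M'u$, the paper works directly on the cyclic orbit: for an $\alpha^F$-analytic element $a$ it computes
\[
\bigl\|\pi_\omega(f_n-f_k)\pi_\omega(a^*)u\bigr\|^2 = \omega\bigl(a\,|f_n-f_k|^2 a^*\bigr) = \omega\bigl(|f_n-f_k|^2\, a^*\alpha^F_{i\beta}(a)\bigr) \leq \sqrt{\int_X |f_n-f_k|^4 \,\mathrm{d} m_\omega}\;\bigl\|a^*\alpha^F_{i\beta}(a)\bigr\|,
\]
the middle equality being the KMS relation and the last step Cauchy--Schwarz for the state $\omega$. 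Since analytic elements are dense this already shows $\{\pi_\omega(f_n)\}$ is SOT-Cauchy on the dense set $\{\pi_\omega(a^*)u : a \text{ analytic}\}$, and the uniform bound finishes. Your route is tidier and more structural, packaging the entire KMS input into the single cited fact that the GNS vector of a KMS state is separating; the paper's computation is more self-contained and makes the use of the KMS condition explicit at the one place it is needed. Both approaches leave the remaining verifications (homomorphism, covariance, the two vector-state identities) as routine, and your sketch of those steps is fine.
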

\begin{proof}
Set $H_{0}:=\overline{\pi_{\omega}(C(X))u}$. The triple $(\pi_{\omega}|_{H_{0}}, H_{0}, u)$ is a GNS representation for $\omega|_{C(X)}$ and hence $(\pi_{\omega}|_{H_{0}}(C(X)))''$ is naturally isomorphic to $L^{\infty}(m_{\omega})$. Since $\omega$ is a KMS state the vector $u$ is separating for $\pi_{\omega}(C(X)\rtimes_{\phi}\mathbb{Z})''$, and hence the map $\pi_{\omega}(C(X))''\ni T \to T|_{H_{0}}\in (\pi_{\omega}|_{H_{0}}(C(X)))''$ is an isomorphism. Combining these two isomorphisms we get our desired $*$-homomorphism.
\end{proof}

\begin{lemma}\label{30-07-19d} Let $\omega$ be a $\beta$-KMS state for $\alpha^F$, $\beta  \neq 0$. There are unique numbers $t_{\infty} \geq 0$ and $t_p \geq 0, \ p =1,2,3, \cdots$  such that
\begin{equation}\label{30-07-19b}
\omega = t_{\infty}\omega_{\infty} + \sum_{p=1}^{\infty} t_p \omega_p \ ,
\end{equation}
where
\begin{itemize}
\item $t_{\infty} + \sum_{p=1}^{\infty} t_p \ = \ 1$,
\item when $t_{\infty}$ is positive, $\omega_{\infty}$ is a $\beta$-KMS state for $\alpha^F$ and $m_{\omega_{\infty}}$ is concentrated on the set of points that are not $\phi$-periodic, and
\item when $t_p$ is positive, $\omega_p$ is a $\beta$-KMS state for $\alpha^F$ and $m_{\omega_p}$ is concentrated on
\begin{equation}\label{30-07-19c}
\left\{ x \in X: \ \phi^p(x) = x, \ \sum_{j=0}^{p-1} F\left(\phi^j(x)\right) = 0, \ \phi^j(x) \neq x, 1\leq j < p \right\} \ .
\end{equation}
\end{itemize}   
\end{lemma}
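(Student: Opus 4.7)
The plan is a central-projection decomposition in the GNS representation of $\omega$. Partition $X$ into the $\phi$-invariant Borel sets $X_\infty$ (non-periodic points) and, for each $p \geq 1$, the set $X_p = \{x : \phi^p(x) = x\} \setminus \bigcup_{k\mid p,\, k<p}\{x: \phi^k(x)=x\}$ of points of minimal period $p$; these are all Borel because each $\{x : \phi^j(x)=x\}$ is closed. Define $t_\infty := m_\omega(X_\infty)$ and $t_p := m_\omega(X_p)$, so $t_\infty + \sum_p t_p = 1$. Uniqueness of these numbers is immediate: any decomposition $\omega = t_\infty \omega_\infty + \sum_p t_p \omega_p$ whose summands have $m_{\omega_\bullet}$ concentrated on the corresponding disjoint Borel sets must reproduce $m_\omega$ restricted to those sets.

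Next, using \reflemma{31-07-19c} I lift the indicators $\chi_{X_\infty}, \chi_{X_p}$ to projections $P_\infty := \overline{\pi}_\omega(\chi_{X_\infty})$ and $P_p := \overline{\pi}_\omega(\chi_{X_p})$ in $\pi_\omega(C(X)\rtimes_\phi\mathbb{Z})''$. Because the underlying sets are $\phi$-invariant, the first bullet of \reflemma{31-07-19c} gives $\pi_\omega(U) P \pi_\omega(U^*) = P$ for each such projection $P$; combined with automatic commutation with $\pi_\omega(C(X))$, this shows each $P$ is central in the double commutant. The third bullet of \reflemma{31-07-19c}, applied to the monotone bounded partial sums of $\chi_{X_\infty} + \sum_p \chi_{X_p} = 1$, yields the SOT identity $1 = P_\infty + \sum_p P_p$.

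For every index with $t_\bullet > 0$, define $\omega_\bullet(a) := t_\bullet^{-1} \langle P_\bullet \pi_\omega(a)u, u\rangle$. This is a state because $P_\bullet$ is a positive central operator, and its restriction to $C(X)$ is the normalized restriction of $m_\omega$ to the relevant set, so $m_{\omega_\bullet}$ is concentrated where claimed. To verify the $\beta$-KMS condition, choose $f_n \in C(X)$ with $|f_n|\leq 1$ and $f_n \to \chi_{X_\bullet}$ $m_\omega$-a.e., so $\pi_\omega(f_n) \to P_\bullet$ strongly. The flow $\alpha^F$ fixes $C(X)$ pointwise, a direct consequence of $\alpha^F_t(U) = Ue^{-itF}$, hence each $f_n$ is analytic with $\alpha^F_{i\beta}(f_n) = f_n$. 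Combining this, the KMS property of $\omega$, and centrality of $P_\bullet$ gives, for analytic $a,b$,
\begin{align*}
t_\bullet\,\omega_\bullet(ab) &= \lim_n \omega(f_n ab) = \lim_n \omega\bigl(b\,\alpha^F_{i\beta}(f_n)\,\alpha^F_{i\beta}(a)\bigr) \\
&= \lim_n \omega\bigl(bf_n\,\alpha^F_{i\beta}(a)\bigr) = t_\bullet\,\omega_\bullet\bigl(b\,\alpha^F_{i\beta}(a)\bigr).
\end{align*}
The decomposition $\omega = t_\infty \omega_\infty + \sum_p t_p \omega_p$ then follows from $1 = P_\infty + \sum_p P_p$.

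Finally, when $t_p > 0$, the additional support property comes from iterating the $e^{\beta F}$-conformal equation: for any Borel $B \subseteq X_p$ the identity $\phi^p|_{X_p} = \mathrm{id}$ gives
\[
m_{\omega_p}(B) \;=\; m_{\omega_p}(\phi^p(B)) \;=\; \int_B \exp\Bigl(\beta \sum_{j=0}^{p-1} F\circ \phi^j\Bigr)\,\mathrm{d}m_{\omega_p},
\]
forcing $\sum_{j=0}^{p-1} F\circ \phi^j = 0$ $m_{\omega_p}$-a.e.\ since $\beta \neq 0$. The main technical obstacle I expect is the KMS verification for the compressed states: one must carefully combine centrality of $P_\bullet$ in the double commutant, strong-operator approximation of $P_\bullet$ by continuous functions, and the fact that $C(X)$ is pointwise fixed by the flow, as sketched above.
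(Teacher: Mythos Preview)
Your proof is correct and follows essentially the same route as the paper: partition $X$ by minimal period, lift the indicators to central projections via \reflemma{31-07-19c}, and compress $\omega$ accordingly. The only difference is that where the paper invokes Corollary~5.3.4 of \cite{BR} to conclude that each compressed state is $\beta$-KMS, you verify this directly by approximating $P_\bullet$ with continuous functions fixed by $\alpha^F$; your argument is sound and makes the proof self-contained at the cost of a few extra lines.
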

\begin{proof} 
 Let $M$ denote the set of points in $X$ that are not $\phi$-periodic. Set $t_{\infty} = m_{\omega}(M)$ and $t_p = m_{\omega}(M_p)$ where 
 $$
 M_p = \left\{ x \in X: \ \phi^p(x) = x, \ \phi^j(x) \neq x, \ 1\leq j < p \right\} \ .
 $$
Then $t_{\infty} + \sum_{p=1}^{\infty} t_p \ = \ 1$. Assume that $t_{\infty} \neq 0$. Since $M$ is $\phi$-invariant it follows from Lemma \ref{31-07-19c} that $\overline{\pi}_{\omega}(1_M)$ is central in $\pi_{\omega}\left( C(X) \rtimes_{\phi} \mathbb Z\right)''$ and we define a state $\omega_{\infty}$ on $C(X)\rtimes_{\phi} \mathbb Z$ such that
$$
\omega_{\infty}(a) = t_{\infty}^{-1} \left< \overline{\pi}(1_M)\pi_{\omega}(a)u,u\right> \ .
$$ 
It follows from Corollary 5.3.4 in \cite{BR} that $\omega_{\infty}$ is a $\beta$-KMS state. Similarly, when $t_p \neq 0$ we define a $\beta$-KMS state $\omega_p$ on $C(X)\rtimes_{\phi} \mathbb Z$ such that
$$
\omega_{p}(a) = t_p^{-1}\left< \overline{\pi}(1_{M_p})\pi_{\omega}(a)u,u\right> \ .
$$ 
Note that
$$
m_{\omega_{\infty}}(B) = t_{\infty}^{-1}m_{\omega}(B \cap M)
$$
and 
$$
m_{\omega_p}(B) = t_{p}^{-1}m_{\omega}(B \cap M_p)
$$
for all Borel sets $B \subseteq X$. In particular, $m_{\omega_{\infty}}$ and $m_{\omega_p}$ are concentrated on $M$ and $M_p$, respectively. Finally, note that 
$$
m_{\omega_p}(B) = m_{\omega_p}(\phi^p(B)) = \int_B e^{\beta \sum_{j=0}^{p-1} F\circ \phi^j(x)} \ \mathrm{d} m_{\omega_p}(x)
$$
for all Borel sets $B \subseteq X$ since $m_{\omega_p}$ is $e^{\beta F}$-conformal and concentrated on $M_p$.  This implies that $\sum_{j=0}^{p-1} F\circ \phi^j(x) = 0$ for $m_{\omega_p}$-almost all $x$ since $\beta \neq 0$. It follows that $m_{\omega_p}$ is concentrated on the set \eqref{30-07-19c}.

\end{proof}

 Recall that a measure $m$ on $X$ is $\phi$-ergodic if $m(B)=0$ or $m(B^{C})=0$ when $B$ is a Borel set such that $\phi^{-1}(B)=B$.

\begin{lemma}\label{31-07-19b} Let $\omega$ be an extremal $\beta$-KMS state for $\alpha^F, \ \beta \neq 0$. Then $m_{\omega}$ is $\phi$-ergodic.
\end{lemma}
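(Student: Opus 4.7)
The plan is to argue by contradiction using the extension $\overline{\pi}_{\omega}$ from Lemma \ref{31-07-19c} together with the standard fact that extremal KMS states generate factors.

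Suppose $m_{\omega}$ fails to be $\phi$-ergodic, so there is a Borel set $B \subseteq X$ with $\phi^{-1}(B) = B$ and $0 < m_{\omega}(B) < 1$. Consider the projection $P := \overline{\pi}_{\omega}(1_B) \in \pi_{\omega}(C(X)\rtimes_{\phi}\mathbb Z)''$. I would first show that $P$ lies in the center of $\pi_{\omega}(C(X)\rtimes_{\phi}\mathbb Z)''$. Since $\overline{\pi}_{\omega}$ is a $*$-homomorphism from the commutative algebra $L^{\infty}(m_{\omega})$ and extends $\pi_{\omega}|_{C(X)}$, the projection $P$ commutes with $\pi_{\omega}(f)$ for all $f \in C(X)$. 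Moreover, since $\phi^{-1}(B) = B$ we have $1_B \circ \phi^{-1} = 1_B$ in $L^{\infty}(m_{\omega})$, and hence by the first bullet point of Lemma \ref{31-07-19c},
$$
\pi_{\omega}(U) P \pi_{\omega}(U^*) = \overline{\pi}_{\omega}(1_B \circ \phi^{-1}) = P \ .
$$
Thus $P$ commutes with both $\pi_{\omega}(C(X))$ and $\pi_{\omega}(U)$, so it commutes with $\pi_{\omega}(C(X)\rtimes_{\phi}\mathbb Z)$ and therefore lies in the center of the bicommutant.

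Next I would invoke the standard result (Theorem 5.3.30 of \cite{BR}) that a $\beta$-KMS state $\omega$ is extremal in the simplex of $\beta$-KMS states if and only if the von Neumann algebra $\pi_{\omega}(C(X)\rtimes_{\phi}\mathbb Z)''$ is a factor. Under our extremality hypothesis, the center is therefore trivial, so $P \in \{0, \mathbf 1\}$. Applying the cyclic vector $u$ and using the second bullet point of Lemma \ref{31-07-19c},
$$
m_{\omega}(B) = \int_X 1_B \ \mathrm{d} m_{\omega} = \langle P u, u\rangle \in \{0, 1\} \ ,
$$
contradicting $0 < m_{\omega}(B) < 1$. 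Hence $m_{\omega}$ is $\phi$-ergodic.

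The only potential obstacle is verifying that the extension $\overline{\pi}_{\omega}$ really does give a projection commuting with $\pi_{\omega}(U)$ for a $\phi$-invariant Borel set, but this is immediate from the compatibility property in the first bullet of Lemma \ref{31-07-19c}; everything else is a direct application of the factor criterion for extremal KMS states.
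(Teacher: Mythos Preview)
Your argument is correct and is essentially the same as the paper's: both use Lemma \ref{31-07-19c} to produce the central projection $\overline{\pi}_{\omega}(1_B)$ from a $\phi$-invariant Borel set of intermediate measure. The only difference is cosmetic: you invoke the factor criterion (Theorem 5.3.30 of \cite{BR}) as a black box, whereas the paper writes out the resulting convex decomposition $\omega = m_{\omega}(A_1)\omega_1 + m_{\omega}(A_2)\omega_2$ explicitly, appealing to Corollary 5.3.4 of \cite{BR} (as in the proof of Lemma \ref{30-07-19d}) to see that the pieces are again $\beta$-KMS states.
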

\begin{proof} Assume $m_{\omega}$ is not ergodic. There is then a $\phi$-invariant Borel set $A_1 \subseteq X$ such that $0 < m_{\omega}(A_1) < 1$. Set $A_2 = X \backslash A_1$. As in the proof of Lemma \ref{30-07-19d} we can then define $\beta$-KMS states $\omega_i$ such that
$$
\omega_{i}(a) = m_{\omega}(A_i)^{-1}\left< \overline{\pi}(1_{A_i})\pi_{\omega}(a)u,u\right> \ .
$$ 
Then $m_{\omega_i}$ is concentrated on $A_i$ and hence $\omega_1 \neq \omega_2$. Since $\omega = m_{\omega}(A_1)\omega_1 + m_{\omega}(A_2)\omega_2$ this contradicts the assumed extremality of $\omega$. 

\end{proof}

A finite $\phi$-orbit $\mathcal O$ will be called \emph{$F$-cyclic} when $\sum_{x \in \mathcal O} F(x) = 0$.

\begin{lemma}\label{30-07-19dd} Let $\omega$ be an extremal $\beta$-KMS state for $\alpha^F$, $\beta \neq 0$. Then $m_{\omega}$ is either concentrated on the set of non-periodic points or on a finite $F$-cyclic $\phi$-orbit.
\end{lemma}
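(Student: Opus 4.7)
I would apply Lemma \ref{30-07-19d} to the extremal state $\omega$ to obtain a decomposition
\[
\omega \ = \ t_{\infty}\omega_{\infty} + \sum_{p=1}^{\infty} t_p \omega_p
\]
in which every $\omega_\infty$ and $\omega_p$ with positive weight is itself a $\beta$-KMS state for $\alpha^F$. Extremality of $\omega$ then forces exactly one of the weights $t_\infty, t_1, t_2, \ldots$ to equal $1$. If $t_\infty = 1$, then $m_\omega = m_{\omega_\infty}$ is concentrated on the set of non-periodic points and we are done. Otherwise $t_p = 1$ for some $p \geq 1$, and $m_\omega$ is concentrated on the set $S_p$ appearing in \eqref{30-07-19c}; the remaining task is to show that in this case $m_\omega$ is concentrated on a single $\phi$-orbit inside $S_p$.

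By Lemma \ref{31-07-19b} the measure $m_\omega$ is $\phi$-ergodic, and on $S_p$ the homeomorphism $\phi$ acts freely with every orbit of cardinality $p$. Using a countable open basis $\{U_n\}$ of $X$ one can extract a Borel transversal $T \subset S_p$ meeting every $\phi$-orbit in exactly one point, for example by selecting from each orbit $\mathcal{O}$ the unique element of $\mathcal{O} \cap U_{n(\mathcal{O})}$, where $n(\mathcal{O}) = \min\{n : |\mathcal{O} \cap U_n| = 1\}$. The associated retraction $q : S_p \to T$ sending $x$ to its orbit representative is then a Borel map satisfying $q \circ \phi = q$, so for every Borel $A \subset T$ the preimage $q^{-1}(A) \subset S_p$ is $\phi$-invariant, and ergodicity of $m_\omega$ gives $m_\omega(q^{-1}(A)) \in \{0,1\}$. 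Hence $q_* m_\omega$ is a $\{0,1\}$-valued Borel probability measure on the standard Borel space $T$, and such a measure must be a Dirac mass $\delta_{x_0}$. Therefore $m_\omega$ is concentrated on the single $\phi$-orbit $\{x_0, \phi(x_0), \ldots, \phi^{p-1}(x_0)\} \subset S_p$, which is $F$-cyclic by the defining condition of $S_p$.

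The one step requiring some care is the construction of the Borel transversal and the claim that a $\{0,1\}$-valued Borel probability measure on a standard Borel space is necessarily a Dirac mass, but neither is a genuine obstacle: the $\langle \phi \rangle$-action on $S_p$ factors through the finite group $\mathbb{Z}/p\mathbb{Z}$, which makes the transversal construction elementary, and the measure-theoretic assertion follows by embedding $T$ into $[0,1]$ and examining where the distribution function of $q_* m_\omega$ jumps from $0$ to $1$.
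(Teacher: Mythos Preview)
Your argument is correct and shares the same opening move as the paper: use Lemma~\ref{30-07-19d} together with extremality to reduce to the case where $m_\omega$ is concentrated on the set $M_p$ of $F$-cyclic points of exact period $p$, and then invoke Lemma~\ref{31-07-19b} to know $m_\omega$ is ergodic. The divergence comes in the final step, where you must show an ergodic measure on $M_p$ lives on a single orbit. You build a Borel transversal $T$ for the free $\mathbb Z/p\mathbb Z$-action, push $m_\omega$ forward along the quotient map, and appeal to the general fact that a $\{0,1\}$-valued Borel probability on a standard Borel space is a Dirac mass. The paper instead works metrically: it equips the space of orbits with the Hausdorff distance, partitions $M_p$ into finitely many $\phi$-invariant Borel pieces on which orbits are $1/k$-close, uses ergodicity to select one piece of full measure for each $k$, and observes that the nested intersection is a single orbit. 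Your route is cleaner conceptually (it identifies the orbit space once and for all) but leans on a bit of descriptive set theory; the paper's route is more self-contained and avoids any appeal to Borel selection or the structure theory of standard Borel spaces, at the cost of a slightly more hands-on construction.
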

\begin{proof} Assume that $m_{\omega}$ is not concentrated on the set of non-periodic points. It follows then from Lemma \ref{30-07-19d} that $m_{\omega}$ is concentrated on 
\begin{equation*}
M_p = \left\{ x \in X: \ \phi^p(x) = x, \ \sum_{j=1}^p F\left(\phi^j(x)\right) = 0, \ \phi^j(x) \neq x, \ 1 \leq j < p \right\} 
\end{equation*}
 for some $p \in \mathbb N$. Choose a metric for the topology of $X$ and let $D$ be the corresponding Hausdorff metric on the set of compact subsets of $X$. For $x \in M_p$ let $\mathcal O_x$ denote the orbit of $x$ under $\phi$. For each $k \in \mathbb N$ we can then construct a partition
 $$
 M_p = A^k_1 \sqcup A^k_2 \sqcup A^k_3 \sqcup \cdots \sqcup A^k_{n_k}
 $$
 of $M_p$ into $\phi$-invariant Borel sets $A^k_i$ such that 
 $$
 D(\mathcal O_x,\mathcal O_y) \leq \frac{1}{k}
 $$
 when $x,y \in A^k_i, \ i =1,2,\cdots, n_k$. Since $m_{\omega}$ is ergodic by Lemma \ref{31-07-19b} there is an $i_k \in \{1,2,\cdots, n_k\}$ such that $m_{\omega}(A^k_{i_k}) =1$. Then 
 $$
 m_{\omega}\left( \bigcap_{k=1}^{\infty} A^k_{i_k}\right) \ = \ 1 \ .
 $$
This completes the proof since $\bigcap_{k=1}^{\infty} A^k_{i_k}$ must be a single orbit.

\end{proof}

Let $ x \in X$ be a point of minimal $\phi$-period $p$. Define $F_x : \{1,2,\cdots, p\} \to \mathbb R$ such that $F_x(i) = F(\phi^i(x))$. Let $\pi_x : C(X)\rtimes_{\phi} \mathbb Z \to C(\{1,2,\cdots, p\})\rtimes_{\sigma} \mathbb Z$ be the $*$-homomorphism obtained from the equivariant $*$-homomorphism $C(X) \to C(\{1,2,\cdots, p\})$ sending $f$ to the function $\tilde{f} \in C(\{1,2,\cdots ,p\})$ where
$$
\tilde{f}(i) = f(\phi^i(x))\ .
$$
Note that $\alpha^{F_x}_t \circ \pi_x = \pi_x \circ \alpha^F_t$ . As observed in Example \ref{31-07-19a} we can identify $C(\{1,2,\cdots, p\})\rtimes_{\sigma} \mathbb Z$ with $C(\mathbb T) \otimes M_p(\mathbb C)$ and we consider therefore also $\pi_x$ as a $*$-homomorphism $\pi_x : C(X) \rtimes_{\phi} \mathbb Z \to C( \mathbb T) \otimes M_p(\mathbb C)$. As observed in Example \ref{31-07-19a} there are no $\beta$-KMS states for $\alpha^{F_x}$ unless $\sum_{i=1}^p F(\phi^i(x)) = 0$, i.e. the orbit of $x$ is $F$-cyclic.

\begin{lemma}\label{01-08-19}  Let $ \mathcal O$ be a finite $F$-cyclic $\phi$-orbit. The set $\mathcal F^{\mathcal O}$ of $\beta$-KMS states $\omega$ of $\alpha^F$ such that $m_{\omega}$ is concentrated on $\mathcal O$ is a closed face in the simplex of $\beta$-KMS states for $\alpha^F$, and $\mathcal F^{\mathcal O}$ is affinely homeomorphic to the simplex of Borel probability measures on $\mathbb T$.  
\end{lemma}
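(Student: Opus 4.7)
The plan is to use the equivariant surjection $\pi_x : C(X) \rtimes_{\phi} \mathbb Z \to C(\{1,\dots,p\})\rtimes_{\sigma}\mathbb Z$ introduced just before the statement, for some chosen $x \in \mathcal O$ of minimal $\phi$-period $p$. Pullback by $\pi_x$ gives a continuous affine map from the simplex of $\beta$-KMS states for $\alpha^{F_x}$ into the $\beta$-KMS simplex for $\alpha^F$, and the image of this map lands inside $\mathcal F^{\mathcal O}$ since $\pi_x(f) = 0$ whenever $f \in C(X)$ vanishes on $\mathcal O$. Combined with the identification of the simplex of $\beta$-KMS states for $\alpha^{F_x}$ with the Borel probability measures on $\mathbb T$ already carried out in Example \ref{31-07-19a}, this would give the desired description as soon as the pullback is shown to be surjective onto $\mathcal F^{\mathcal O}$.

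The core step is therefore to prove that every $\omega \in \mathcal F^{\mathcal O}$ factors as $\omega = \tilde\omega \circ \pi_x$. The kernel of $\pi_x$ is the closed two-sided ideal in $C(X) \rtimes_{\phi} \mathbb Z$ generated by $I_{\mathcal O} := \{f \in C(X): f|_{\mathcal O} = 0\}$, i.e.\ the ideal corresponding to the closed $\phi$-invariant subset $\mathcal O \subseteq X$. For $f \in I_{\mathcal O}$ the function $f$ vanishes $m_\omega$-almost everywhere because $m_\omega$ is concentrated on $\mathcal O$, so the $*$-homomorphism $\overline{\pi}_\omega$ of Lemma \ref{31-07-19c} forces $\pi_\omega(f) = \overline{\pi}_\omega(f) = 0$. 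Consequently $\omega(a f b) = \left\langle \pi_\omega(a)\pi_\omega(f)\pi_\omega(b)u, u\right\rangle = 0$ for all $a, b \in C(X) \rtimes_{\phi} \mathbb Z$, so $\omega$ vanishes on $\ker \pi_x$ and descends to a state $\tilde\omega$ on $C(\{1,\dots,p\})\rtimes_{\sigma}\mathbb Z$. The equivariance $\alpha^{F_x}_t \circ \pi_x = \pi_x \circ \alpha^F_t$ then forces $\tilde\omega$ to be a $\beta$-KMS state for $\alpha^{F_x}$. The resulting pullback map is a continuous affine bijection between the two compact convex KMS simplices, hence an affine homeomorphism.

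It remains to check that $\mathcal F^{\mathcal O}$ is a closed face in the ambient KMS simplex. If $\omega_n \to \omega$ weak-$*$ with $\omega_n \in \mathcal F^{\mathcal O}$, then $\int_X f \ \mathrm{d}m_{\omega_n} \to \int_X f \ \mathrm{d}m_\omega$ for every $f \in C(X)$, i.e.\ $m_{\omega_n} \to m_\omega$ weakly as measures; since $\mathcal O$ is closed, Portmanteau gives $m_\omega(\mathcal O) \geq \limsup_n m_{\omega_n}(\mathcal O) = 1$ and so $\omega \in \mathcal F^{\mathcal O}$. If $\omega \in \mathcal F^{\mathcal O}$ is written as $t\omega_1 + (1-t)\omega_2$ with $0 < t < 1$ and $\omega_i$ $\beta$-KMS states, then $m_\omega = t m_{\omega_1} + (1-t) m_{\omega_2}$ forces $m_{\omega_i}(X \setminus \mathcal O) = 0$, so both $\omega_i$ lie in $\mathcal F^{\mathcal O}$. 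The main obstacle is the factorization step, which crucially uses the $L^\infty$-extension from Lemma \ref{31-07-19c} to promote the measure-theoretic concentration of $m_\omega$ on $\mathcal O$ to the operator-algebraic vanishing $\pi_\omega(f) = 0$ for $f \in I_{\mathcal O}$.
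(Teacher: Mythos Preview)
Your proof is correct and follows essentially the same route as the paper: show that $\omega \in \mathcal F^{\mathcal O}$ if and only if $\omega$ factors through $\pi_x$, then invoke Example \ref{31-07-19a}. Two minor differences are worth noting. First, the paper identifies $\ker \pi_x$ explicitly as the closed span of the $fU^k$ with $f \in C_0(X \setminus \mathcal O)$ by appealing to exactness of $\mathbb Z$, and then kills $\omega(fU^k)$ via the Cauchy--Schwarz inequality $|\omega(fU^k)|^2 \leq \omega(f^*f) = \int_X |f|^2\,\mathrm{d}m_\omega = 0$; this is more elementary than your use of the $L^\infty$-extension from Lemma \ref{31-07-19c} to force $\pi_\omega(f) = 0$. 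Second, the paper simply observes that the set of states factoring through a fixed $*$-homomorphism is ``clearly'' a closed face, whereas you verify closedness and the face property by hand via the measures $m_{\omega_i}$; both arguments are fine.
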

\begin{proof} Let $x \in \mathcal O$. We claim that $\omega$ factorises through $\pi_x$ if and only if $m_{\omega}$ is concentrated on  $\mathcal O$. To show this it suffices to show that $\omega$ factorises through $\pi_x$ when $m_{\omega}$ is concentrated on $\mathcal O$ since the reverse implication is trivial. To this end observe that since $\mathbb Z$ is an exact group the sequence
$$
0 \ \to \ C_0(X \backslash \mathcal O) \rtimes_{\phi} \mathbb Z \ \to \ C(X) \rtimes_{\phi} \mathbb Z \ \to \ C(\mathcal O)\rtimes_{\phi} \mathbb Z \ \to \ 0 
$$
is exact, implying that $\ker \pi_x$ is the closed linear span of elements of the form $fU^k$ where $f \in C_0(X \backslash \mathcal O)$ and $k \in \mathbb Z$. Since
\begin{align*}
& \left|\omega(fU^k)\right|^2 \leq \omega(f^*f) = \int_X |f|^2 \ \mathrm{d} m_{\omega} = 0 \ ,
\end{align*}
because $m_{\omega}$ is concentrated on $\mathcal O$, it follows that $\omega$ factorises through $\pi_x$, proving the claim. It follows that the set of $\beta$-KMS states $\omega$ for which $m_{\omega}$ is concentrated on $\mathcal O$ is a closed face since the set of those that factorise through $\pi_x$ clearly is. The remaining statements follow from Example \ref{31-07-19a}.
\end{proof}

\begin{thm}\label{01-08-19a} Let $\omega$ be an extremal $\beta$-KMS state, $\beta \neq 0$. Then either
$$
 \omega(a) = \int_X E(a) \ \mathrm{d} m_{\omega} \ \ \ \forall a \in C(X)\rtimes_{\phi} \mathbb Z \ ,
 $$
where $m_{\omega}$ is $\phi$-ergodic and concentrated on the set of points that are not $\phi$-periodic or $\omega \in \mathcal F^{\mathcal O}$ for some finite $F$-cyclic $\phi$-orbit $\mathcal O$. 
\end{thm}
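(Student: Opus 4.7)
My plan is to read off both alternatives from the preceding structural lemmas, reducing the non-trivial content to the case where $m_\omega$ lives on non-periodic points. Lemma~\ref{30-07-19dd} gives the key dichotomy: $m_\omega$ is concentrated either on a single finite $\phi$-orbit $\mathcal O$, which must be $F$-cyclic since otherwise no $e^{\beta F}$-conformal measure is supported on $\mathcal O$ (Example~\ref{31-07-19a}), or on the set of non-periodic points; Lemma~\ref{31-07-19b} gives the $\phi$-ergodicity of $m_\omega$ in both cases. In the finite-orbit alternative Lemma~\ref{01-08-19} directly yields $\omega\in\mathcal F^{\mathcal O}$, so the whole burden of the proof is to establish the first alternative under the assumption that $m_\omega$ is concentrated on non-periodic points.

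To handle this remaining case I would invoke the dual circle action $\hat\phi_z(fU^n)=z^n fU^n$ on $C(X)\rtimes_\phi\mathbb Z$. It commutes with $\alpha^F$ and fixes $C(X)$ pointwise, so each $\omega\circ\hat\phi_z$ is again an extremal $\beta$-KMS state with the same conformal restriction $m_\omega$, and averaging over $z\in\mathbb T$ gives $\omega\circ E=\int_{\mathbb T}\omega\circ\hat\phi_z\,dz$. The desired factorisation $\omega(a)=\int_X E(a)\,dm_\omega$ is therefore equivalent to the $\hat\phi$-stabiliser $H=\{z\in\mathbb T:\omega\circ\hat\phi_z=\omega\}$ being all of $\mathbb T$, and I would establish this by contradiction.

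Assume $H\subsetneq\mathbb T$; as a proper closed subgroup of $\mathbb T$, $H$ is finite cyclic, and hence some $n\neq 0$ has the functional $f\mapsto\omega(fU^n)$ not identically zero. The Cauchy--Schwarz estimate $|\omega(fU^n)|^2\leq\omega(ff^*)=\int_X|f|^2\,dm_\omega$ shows that this functional extends to a bounded form on $L^2(m_\omega)$, yielding $g_n\in L^2(m_\omega)$, $g_n\not\equiv 0$, with $\omega(fU^n)=\int_X fg_n\,dm_\omega$. The KMS identity $\omega(fU^n)=\omega(U^n f)$ combined with $n$-fold iteration of the conformality relation~\eqref{eqint} produces the cocycle equation
\begin{equation*}
g_n\ =\ (g_n\circ\phi^n)\,e^{\beta S_n F}\qquad m_\omega\text{-a.e.,}
\end{equation*}
where $S_n F=\sum_{k=0}^{n-1}F\circ\phi^k$. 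The $\alpha^F$-invariance $\omega\circ\alpha^F_t=\omega$ of $\beta$-KMS states, applied to $fU^n$ using $\alpha^F_t(fU^n)=f\cdot(e^{-it S_n F}\circ\phi^{-n})\cdot U^n$, gives the stronger pointwise constraint
\begin{equation*}
g_n\cdot\bigl(e^{-it S_n F\circ\phi^{-n}}-1\bigr)\ =\ 0\qquad m_\omega\text{-a.e., for every }t\in\mathbb R,
\end{equation*}
so $g_n$ is supported on $\{S_n F\circ\phi^{-n}=0\}$; together with the $\phi^n$-invariance of $\{g_n\neq 0\}$ forced by the cocycle equation, this gives $S_n F=0$ on $B:=\{g_n\neq 0\}$, and $\phi$-ergodicity makes the $\phi$-saturation of $B$ have full $m_\omega$-measure.

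The main obstacle will be to convert this data into a contradiction with the non-periodicity of $m_\omega$-a.e.\ point. The strategy I would pursue is: choose the minimal $p\mid n$ with $\phi^p(B)=B$ modulo $m_\omega$-null sets, so that $B,\phi(B),\dots,\phi^{p-1}(B)$ form a partition of the full-measure $\phi$-saturation; a telescoping identity together with $S_n F|_B=0$ then yields $S_p F|_B=0$, and the nowhere-vanishing $\phi^p$-invariant function $g_n|_B$ combined with the $\phi^p$-ergodicity of $m_\omega|_B$ localises $B$ onto a single $\phi^p$-periodic orbit --- contradicting the hypothesis that $m_\omega$ charges only non-periodic points. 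The contradiction forces $H=\mathbb T$, giving $\omega=\omega\circ E=\int_X E(\cdot)\,dm_\omega$.
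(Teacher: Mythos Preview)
Your reduction to the case where $m_\omega$ is concentrated on non-periodic points is correct and matches the paper's use of Lemmas~\ref{30-07-19dd}, \ref{31-07-19b} and~\ref{01-08-19}. The dual-action setup is also sound: the derivation of $g_n\in L^2(m_\omega)$, the cocycle identity $g_n=(g_n\circ\phi^n)e^{\beta S_nF}$, and the support constraint $g_n$ vanishes off $\{S_nF=0\}$ are all correct.

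The gap is in your final paragraph. The assertion that ``the nowhere-vanishing $\phi^p$-invariant function $g_n|_B$ combined with the $\phi^p$-ergodicity of $m_\omega|_B$ localises $B$ onto a single $\phi^p$-periodic orbit'' is simply false. First, $g_n$ is only $\phi^n$-invariant on $B$, not $\phi^p$-invariant; the divisibility $p\mid n$ goes the wrong way. Second, and more seriously, ergodicity of a measure never forces that measure to be supported on a periodic orbit: Lebesgue measure under an irrational rotation is ergodic and has no periodic points. What ergodicity would give you (if you had it for $\phi^p$) is that an invariant function is \emph{constant}, not that points are periodic. There is no mechanism in your data that produces periodic points, so no contradiction arises. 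The telescoping claim $S_pF|_B=0$ is also unjustified: $S_nF=\sum_{j=0}^{k-1}S_pF\circ\phi^{jp}$ vanishing on $B$ does not force the individual summands to vanish, since $m_\omega$ is not $\phi$-invariant and the terms need not coincide.

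The paper's argument is far more direct and avoids all of this. To show $\omega(fU^n)=0$ for $n\neq 0$ and $f\geq 0$, one covers the (closed, $m_\omega$-null) set of $n$-periodic points by a small open set $V$ and uses a partition of unity to write $f=f_0+\sum_{i>0}f_i$ with $\operatorname{supp}f_0\subseteq V$ and each $\operatorname{supp}f_i\subseteq V_i$ satisfying $V_i\cap\phi^{-n}(V_i)=\emptyset$. For such $f_i$ one computes, using only that $C(X)$ is fixed by $\alpha^F$,
\[
\omega(f_iU^n)=\omega\bigl(\sqrt{f_i}\,U^n\,(\sqrt{f_i}\circ\phi^n)\bigr)=\omega\bigl(U^n\,(\sqrt{f_i}\circ\phi^n)\sqrt{f_i}\bigr)=0,
\]
since $(\sqrt{f_i}\circ\phi^n)\sqrt{f_i}$ has support in $\phi^{-n}(V_i)\cap V_i=\emptyset$. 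The remaining term $|\omega(f_0U^n)|\leq\|f\|_\infty\, m_\omega(V)^{1/2}$ is then made small. This uses nothing beyond Cauchy--Schwarz and the KMS identity for elements of $C(X)$, and completely bypasses the need to analyse $g_n$.
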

\begin{proof} It remains to show that $\omega$ factorises through $E$ when $m_{\omega}$ is concentrated on the set $M$ of points not periodic under $\phi$. For this it suffices to show that $\omega(fU^n) =0$ when $f \in C(X)$ is non-negative and $n\neq 0$. Let $M_{n}\subseteq X$ be the closed set of $n$-periodic points, and for $\varepsilon >0$ find an open set $V$ with $M_{n}\subseteq V$ and $m(V)<\varepsilon$. Every $x\notin M_{n}$ has an open neighborhood $V_{x}$ such that $V_{x}\cap\phi^{-n}(V_{x})=\emptyset$. We can therefore write $f=f_{0}+\sum_{i>0} f_{i}$ as a finite sum of non-negative continuous functions with $\supp(f_{0})\subseteq V$ and $\supp(f_{i})\subseteq V_{i}$ where $V_{i}\cap\phi^{-n}(V_{i})=\emptyset$. For $i>0$ we find that
\begin{align*}
&\omega(f_{i}U^{n})= \omega\left(\sqrt{f_i}U^nU^{-n}\sqrt{f_i}U^n \right) = \omega(\sqrt{f_{i}}U^{n} \sqrt{f_{i}}\circ\phi^{n})\\
&
=\omega(U^{n}\sqrt{f_{i}}\circ\phi^{n}\sqrt{f_{i}} )=0 \ ,
\end{align*}
and hence $|\omega(fU^{n})|=|\omega(f_{0}U^{n})|\leq \sqrt{\omega(f_{0}^{2})} \leq \lVert f \rVert_{\infty} \varepsilon$. Since $\varepsilon > 0$ was arbitrary it follows that $\omega(fU^{n})=0$, as desired.
\end{proof} 

It is possible to base a proof of Theorem \ref{01-08-19a} on the work of Neshveyev and the first author, \cite{N} and \cite{Ch}, but the necessary translation from the groupoid picture would not make the proof any shorter. 

\begin{cor}\label{01-08-19d} Let $\beta  \in \mathbb R \backslash \{0\}$. The map $\omega \mapsto m_{\omega}$ of Lemma \ref{lemma21} from $\beta$-KMS states for $\alpha^F$ to the $e^{\beta F}$-conformal measures for $\phi$ is injective, and hence an affine homeomorphism if and only if there are no finite $F$-cyclic $\phi$-orbits.
\end{cor}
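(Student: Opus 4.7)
Surjectivity of $\omega\mapsto m_\omega$ is already provided by Lemma~\ref{lemma22}, so the corollary reduces to injectivity being equivalent to the absence of finite $F$-cyclic $\phi$-orbits. I would treat the two directions separately.

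For the direction ``$F$-cyclic orbit $\Rightarrow$ non-injective'': if $\mathcal O$ is a finite $F$-cyclic $\phi$-orbit of minimal period $p$, a short direct computation shows that there is only one $e^{\beta F}$-conformal probability measure concentrated on $\mathcal O$. Writing $m = \sum_{i=0}^{p-1} c_i \delta_{\phi^i(x)}$ with $x \in \mathcal O$, the conformality relation forces $c_{i+1} = c_i e^{\beta F(\phi^i(x))}$; the $F$-cyclicity $\sum_j F(\phi^j(x)) = 0$ makes these relations consistent, and normalisation determines the $c_i$ uniquely. On the other hand, by Lemma~\ref{01-08-19} the face $\mathcal F^{\mathcal O}$ of $\beta$-KMS states whose associated measure is concentrated on $\mathcal O$ is affinely homeomorphic to the simplex of Borel probability measures on $\mathbb T$; every element of this (infinite-dimensional) face restricts to the same $m$, so injectivity fails.

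For the reverse direction, assume no finite $F$-cyclic $\phi$-orbit exists, and let $\omega$ be an arbitrary $\beta$-KMS state. Applying Lemma~\ref{30-07-19d} I decompose $\omega = t_\infty \omega_\infty + \sum_{p\geq 1} t_p \omega_p$: for each $p$ with $t_p > 0$ the measure $m_{\omega_p}$ is concentrated on the set \eqref{30-07-19c}, which is precisely the collection of points lying on some $F$-cyclic $\phi$-orbit of minimal period $p$. Under our hypothesis this set is empty, so $t_p = 0$ for every $p\geq 1$ and $m_\omega$ is concentrated on the non-$\phi$-periodic points.

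It remains to deduce that $\omega$ factorises through the conditional expectation $E$, from which the formula $\omega(a) = \int_X E(a)\,dm_\omega$ gives injectivity immediately. My plan is to reuse verbatim the partition-of-unity argument from the proof of Theorem~\ref{01-08-19a} to conclude $\omega(fU^n) = 0$ for all $f \in C(X)$ and $n\neq 0$; in that proof extremality of $\omega$ was invoked only to obtain the support condition on $m_\omega$, which we have just established for a general $\omega$ under our hypothesis. The main subtlety is thus the bookkeeping check that the argument in Theorem~\ref{01-08-19a} really proceeds without any further use of extremality once the support statement is in hand, which the proof there makes transparent. Combining the two directions yields the equivalence.
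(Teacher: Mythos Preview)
Your proof is correct. The paper states the corollary without proof, intending it as an immediate consequence of Theorem~\ref{01-08-19a} (for extremal states) together with Lemma~\ref{01-08-19}; the passage from extremal to arbitrary $\beta$-KMS states would go via Krein--Milman, since the set of states factoring through $E$ is weak*-closed and convex.

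Your route is a mild but genuine variation: instead of invoking Theorem~\ref{01-08-19a} plus Krein--Milman, you apply Lemma~\ref{30-07-19d} directly to an arbitrary $\omega$ to obtain the support condition on $m_\omega$, and then reuse the partition-of-unity computation from the proof of Theorem~\ref{01-08-19a}. You are right that this computation uses only the fact that $m_\omega$ gives measure zero to the periodic points, not extremality. Your approach trades the abstract Krein--Milman step for the concrete decomposition lemma; both are short, and either is perfectly acceptable here. Your argument for the ``$F$-cyclic orbit $\Rightarrow$ non-injective'' direction, pinning down the unique conformal measure on $\mathcal O$ explicitly, is a useful addition that the paper leaves implicit.
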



\begin{cor}\label{01-08-19e} Let $\beta  \in \mathbb R \backslash \{0\}$. Assume that $X$ is not a finite set and that $\phi$ is minimal. The map $\omega \mapsto m_{\omega}$ of Lemma \ref{lemma21} from $\beta$-KMS states for $\alpha^F$ to the $e^{\beta F}$-conformal measures for $\phi$ is an affine homeomorphism.
\end{cor}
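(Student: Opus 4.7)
The strategy is to combine Corollary \ref{01-08-19d} with the observation that minimality forces the absence of finite $F$-cyclic $\phi$-orbits, and then promote the resulting continuous affine bijection to a homeomorphism by a compactness argument.

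First, I would observe that since $\phi$ is minimal and $X$ is infinite, $\phi$ has no periodic points at all. Indeed, if $x \in X$ satisfied $\phi^p(x) = x$ for some $p \geq 1$, then the orbit $\{x,\phi(x),\dots,\phi^{p-1}(x)\}$ would be a non-empty finite closed $\phi$-invariant subset of $X$; by minimality it would have to equal $X$, contradicting that $X$ is infinite. In particular there are no finite $F$-cyclic $\phi$-orbits, so Corollary \ref{01-08-19d} applies and gives that the map $\omega \mapsto m_\omega$ is injective. Surjectivity has already been established in Lemma \ref{lemma22} (with explicit inverse $m \mapsto \omega_m$ where $\omega_m(a) = \int_X E(a)\, \mathrm{d}m$). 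Hence the map is an affine bijection.

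Next I would check that both sides carry natural compact convex structures. On the KMS side, the set of $\beta$-KMS states for $\alpha^F$ is a weak-$*$ closed subset of the state space of $C(X)\rtimes_\phi \mathbb Z$, hence compact. On the measure side, the set of $e^{\beta F}$-conformal probability measures is characterized by the linear relations $\int_X f\, \mathrm{d}m = \int_X (f\circ\phi)e^{\beta F}\, \mathrm{d}m$ for $f \in C(X)$, which are weak-$*$ closed conditions on the compact set of Borel probability measures on $X$; thus this set is also compact and convex. The assignment $\omega \mapsto m_\omega$ is simply the restriction map $\omega \mapsto \omega|_{C(X)}$, which is manifestly weak-$*$ continuous and affine.

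Finally, a continuous bijection between compact Hausdorff spaces is automatically a homeomorphism, so the map is an affine homeomorphism, as required. I do not foresee a significant obstacle: all the work has been done in the preceding results, and the only new input is the elementary observation that infinite minimal systems have no periodic orbits.
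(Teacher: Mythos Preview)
Your argument is correct and matches the paper's intended reasoning: the paper gives no explicit proof here because the statement is immediate from Corollary \ref{01-08-19d} once one notes that an infinite minimal system has no periodic points. Your compactness discussion is accurate but redundant, since the ``affine homeomorphism'' conclusion is already built into the statement of Corollary \ref{01-08-19d}.
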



It follows from Corollary \ref{01-08-19e} that the map from Lemma \ref{lemma21} is an affine homeomorphism when $\beta \neq 0$ and $C(X)\rtimes_{\phi} \mathbb Z$ is simple.

\section{Non-singular transformations and factor types}\label{factor}


In the following we will introduce the notion of types of conformal measures using the standard notion of type of an equivalence relation. We refer the reader to Definition 3.4 in \cite{FM} for the definition of the types $I_{n}$, $n\in \mathbb{N}$, $I_{\infty}$, $II_{1}$, $II_{\infty}$ and  $III $ of equivalence relations. For our homeomorphism $\phi$ on the compact metric space $X$ we consider the orbit-equivalence relation $\mathcal{R}$ where $x\sim y$ if and only if there exists $n\in \mathbb{Z}$ such that $\phi^{n}(x)=y$. It follows from Proposition 3.1 in \cite{FM} that for any $e^{\beta F}$-conformal measure $m$ the space $X$ can be decomposed into $\phi$-invariant Borel sets 
\begin{equation}\label{eqadd1}
X =\bigsqcup_{n\in \mathbb{N}} M_{I_{n}} \sqcup M_{I_{\infty}}\sqcup M_{II_{1}}\sqcup M_{II_{\infty}}\sqcup M_{III}
\end{equation}
unique up to measure zero such that $\mathcal{R}$ restricted to the set $M_{\alpha}$ is of type $\alpha$. Using this decomposition of the equivalence relation $\mathcal{R}$ we can define different types of conformal measures.

\begin{defn}\label{15-08-19} An $e^{\beta F}$-conformal measure $m$ is
\begin{itemize}
\item of type $I_p$ when $m(M_{I_p}) = 1$,
\item of type $I_{\infty}$ when $m(M_{I_{\infty}}) = 1$,
\item of type $II_1$ when $m(M_{II_1}) = 1$,
\item of type $II_{\infty}$ when $m(M_{II_{\infty}}) = 1$, and
\item of type $III$ when $m(M_{III}) = 1$.
\end{itemize}
\end{defn}

In the following we will describe the sets occurring in \eqref{eqadd1} in terms of $\beta F$ when $\beta \neq 0$. Fix therefore an $e^{\beta F}$-conformal measure $m$ and a decomposition of $X$ as in \eqref{eqadd1} with respect to $m$. To ease notation we set
$$
S_k(F) = \begin{cases} -\sum_{j=1}^{|k|} F\circ \phi^{-j} \ , & k \leq -1 \ , \\ 0 \ , \ & k = 0  \ ,\\ \sum_{j=0}^{k-1} F \circ \phi^j \ , \ & k \geq 1 \ . \end{cases}
$$

\begin{lemma}\label{lemadd41}
For $n\in \mathbb{N}$ we have up to a $m$ null set that
$$
M_{I_{n}} \subseteq \left\{ x \in X: \ \phi^n(x) = x,  \ \phi^j(x) \neq x, \ 1 \leq j < n, \ \ S_n(F)(x) = 0 \right\} \ .
$$
and we have up to a $m$ null set that 
$$
M_{I_{\infty}} \subseteq \left\{ x \in X: \
\sum_{n \in \mathbb Z}  e^{\beta S_n(F)(x)} \  < \ \infty \right\} \ .
$$
\end{lemma}

\begin{proof}
By definition there is a Borel isomorphism $\psi$ from $m$-almost all of $M_{I_{n}}$ to $\mu$-almost all of a space $S\times X'$ where the measure $\mu=\lambda \times \mu'$ is a product of the counting measure $\lambda$ on $S$ and a measure $\mu'$ on $X'$ and $|S|=n$ for $n\in \mathbb{N}\cup\{ \infty\}$. Defining a relation $\mathcal{R}'$ on $S\times X'$ by setting $(s,x)\sim (t,y)$ if and only if $x=y$ the map $\psi$ satisfies that
\begin{equation*}
\psi\left( \{ y\in M_{I_{n}}\ | \ (y, x) \in \mathcal{R} \} \right) = 
 \{ z\in S\times X'\ | \ (z , \psi(x))\in \mathcal{R}' \} \ 
\end{equation*}
for $m$-almost all $x\in M_{I_{n}}$ and that $m|_{M_{I_{n}}}\circ \psi^{-1}$ is equivalent to $\mu$. If $n\in \mathbb{N}$ then \eqref{eqadd1} implies that $m$-almost all points in $M_{I_{n}}$ has an orbit of size $n$. This implies for any Borel set $B  \subseteq M_{I_{n}}$ that 
$$
m(B)= m(\phi^{n}(B)) = \int_{B} e^{\beta S_{n}(F)(x)} \ \mathrm{d} m (x) \ ,
$$
which proves the statement for $n\in \mathbb{N}$. If $n= \infty$ the set $M_{s}: =\psi^{-1}(\{s\}\times X')$ for a $s\in S$ has positive measure in $M_{I_{\infty}}$, and it satisfies that $\phi^{k}(M_{s})\cap \phi^{l}(M_{s})$ is a null-set for all $k\neq l$. Hence
$$
1 \geq \sum_{n\in \mathbb{Z}}m\left (  \phi^{n}(M_{s}) \right)
=\sum_{n\in \mathbb{Z}} \int_{M_{s}} e^{\beta S_{n}(F)(x)}\ \mathrm{d}m
=\int_{M_{s}}\sum_{n\in \mathbb{Z}}  e^{\beta S_{n}(F)(x)}\ \mathrm{d}m
$$
and since $\bigcup_{s\in S} M_{s}=M_{I_{\infty}}$ this proves the inclusion for $n=\infty$.
\end{proof}

To describe the set $M_{II_{1}}$ we need two general results for $e^{\beta F}$-conformal measures.

\begin{lemma}\label{25-10-19} Let $\beta \in \mathbb R \backslash \{0\}$.
\begin{itemize}
\item[1)] An atomic $e^{\beta F}$-conformal measure is ergodic if and only if it is concentrated on a single $\phi$-orbit.
\item[2)] Let $x \in X$. If $\phi^p(x) = x$ for some $p \in \mathbb N$, there is an $e^{\beta F}$-conformal measure concentrated on $x$'s $\phi$-orbit if and only 
$$
S_p(F)(x) = 0 \ .
$$
\item[3)] Let $x \in X$ and assume that $x$ is not $\phi$-periodic. There is an  $e^{\beta F}$-conformal measure concentrated on $x$'s $\phi$-orbit if and only
\begin{equation}\label{eq73}
\sum_{k\in \mathbb Z} e^{\beta S_k(F)(x)} \  < \ \infty \ .
\end{equation} 
\end{itemize}
\end{lemma}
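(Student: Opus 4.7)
The plan is to exploit a single rigidity observation: if $m$ is $e^{\beta F}$-conformal and $y \in X$ satisfies $m(\{y\})>0$, then applying the defining identity $m(\phi(B)) = \int_B e^{\beta F}\,\mathrm{d}m$ to the singleton $B=\{y\}$ forces
\[
m(\{\phi(y)\}) \ = \ e^{\beta F(y)}\,m(\{y\}) \ .
\]
Iterating this forwards and (using $\phi\inv$) backwards I obtain $m(\{\phi^k(y)\}) = e^{\beta S_k(F)(y)}\,m(\{y\})$ for every $k \in \Z$. In particular every point in the $\phi$-orbit of $y$ is an atom of positive mass, the set of atoms is $\phi$-invariant, and the weights along any orbit are determined up to one scalar. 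The whole lemma will follow from this observation together with a normalization.

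For (1), I would first note that for an atomic conformal measure $m$ the set of atoms is $\phi$-invariant and decomposes into countably many $\phi$-orbits, each of which is a $\phi$-invariant Borel set; if $m$ is ergodic exactly one of these orbits carries all of the mass, so $m$ is concentrated on it. Conversely, if $m$ is concentrated on a single orbit $\mathcal O$, any $\phi$-invariant Borel set either contains every point of $\mathcal O$ or none (since $\phi$-invariance propagates membership along the orbit), so $m(B)\in\{0,1\}$ and $m$ is ergodic.

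For (2), suppose $\phi^p(x)=x$ and let $q$ be the minimal $\phi$-period of $x$, so that $p=lq$ for some $l\in\N$. An $e^{\beta F}$-conformal measure concentrated on the orbit of $x$ must, by the rigidity above, satisfy $m(\{x\}) = m(\{\phi^q(x)\}) = e^{\beta S_q(F)(x)}m(\{x\})$, forcing $S_q(F)(x)=0$ since $\beta\neq 0$. A direct grouping argument gives $S_p(F)(x) = l\,S_q(F)(x)$ at a $q$-periodic point, so the conditions $S_p(F)(x)=0$ and $S_q(F)(x)=0$ coincide. Conversely, if $S_p(F)(x)=0$ I would set $c_i = e^{\beta S_i(F)(x)}$ for $i=0,1,\dots,q-1$, normalize by $Z=\sum_{i=0}^{q-1} e^{\beta S_i(F)(x)}$, and verify the conformal identity on singletons of the orbit; consistency around the cycle is exactly the condition $S_q(F)(x)=0$, and the identity on each singleton is built into the definition of the $c_i$.

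For (3), if $x$ is not $\phi$-periodic, the rigidity observation shows that any $e^{\beta F}$-conformal measure on the orbit of $x$ has the form $m = c_0\sum_{k\in\Z} e^{\beta S_k(F)(x)}\delta_{\phi^k(x)}$, so such a (finite, hence normalizable) measure exists precisely when $\sum_{k\in\Z}e^{\beta S_k(F)(x)}<\infty$. Conversely, if the series converges I normalize the displayed expression; the conformal identity again reduces to singletons of the orbit, where it follows from $c_{k+1}=e^{\beta F(\phi^k(x))}c_k$. There is no substantial obstacle: the only point of care throughout is the equivalence $S_p(F)(x)=0 \Leftrightarrow S_q(F)(x)=0$ used in (2), handled by the additivity identity $S_{lq}(F)(x) = l\,S_q(F)(x)$ at a $q$-periodic point.
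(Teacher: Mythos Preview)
Your argument is correct and is precisely the elementary verification the paper has in mind: the paper's own proof of this lemma reads in full ``Left to the reader.'' The rigidity observation $m(\{\phi^k(y)\}) = e^{\beta S_k(F)(y)}m(\{y\})$, the orbit decomposition of the set of atoms for part (1), the reduction via $S_{lq}(F)(x)=l\,S_q(F)(x)$ for part (2), and the normalization of $\sum_{k\in\Z} e^{\beta S_k(F)(x)}\delta_{\phi^k(x)}$ for part (3) are exactly the steps one is expected to supply, and you have carried them out without gaps.
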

\begin{proof} Left to the reader.
\end{proof}

\begin{prop}\label{propadd4}
Let $m_{0}$ be an $e^{\beta F}$ conformal measure. Then $m_{0}$ is equivalent to a $\sigma$-finite $\phi$-invariant measure $\nu$ if and only if there is a Borel function $u: X \to \mathbb{R}$ such that $\beta F(x)=u(\phi(x))-u(x)$ for $m_{0}$ almost all $x\in X$. In this case the function $u$ can be chosen as $\ln(k)$ where $\mathrm{d} m_{0} = k \mathrm{d} \nu$.
\end{prop}

\begin{proof}
Assume that $\mathrm{d} m_{0} = k \mathrm{d} \nu$ with $k$ positive. We find that
\begin{equation*}
\begin{split}  & \int_{\phi^{-1}(A)} k \circ \phi \ \mathrm{d} \nu \ = \ \int_{X} (1_A \circ \phi) k \circ \phi \ \mathrm{d} \nu \ = \ \int_X 1_A k \ \mathrm{d}\nu \\
& = \ m_{0}(A) \ = \ \int_{\phi^{-1}(A)} e^{\beta F} \ \mathrm{d} m_{0}  = \int_{\phi^{-1}(A)} e^{\beta F} k \ \mathrm{d} \nu  \ .
 \end{split}
\end{equation*}
and hence  $k\circ \phi(x) = e^{\beta F(x)} k(x)$ for $\nu$-almost every $x$. It is now clear that $u:=\ln(k)$ satisfies the criterion. For the other direction assume that we have a Borel function $u: X \to \mathbb{R}$ such that $\beta F(x)=u(\phi(x))-u(x)$ for $m_{0}$ almost all $x\in X$. Setting $\mathrm{d} \nu = e^{-u}\mathrm{d} m_{0}$ we see that
\begin{equation*}
\int_{X} f \ \mathrm{d} \nu 
= \int_{X} f \ e^{-u} \mathrm{d} m_{0}
= \int_{X} f\circ \phi \ e^{-u\circ \phi} e^{\beta F} \ \mathrm{d} m_{0}
= \int_{X} f\circ \phi \ \mathrm{d} \nu \ ,
\end{equation*}
which proves the Proposition.
\end{proof}

We can now describe the set $M_{II_{1}}$. For this we set
\begin{align*}
M_{I}&:= \left\{ x \in X: \
\sum_{n \in \mathbb Z}  e^{\beta S_n(F)(x)} \  < \ \infty \right\} \cup\\
&\bigcup_{n \in \mathbb{N}} \left\{ x \in X: \ \phi^n(x) = x,  \ \phi^j(x) \neq x, \ 1 \leq j < n, \ \ S_n(F)(x) = 0 \right\}  \ .
\end{align*}

\begin{lemma}\label{lemaddp2}
Up to an $m$-null set then 
$$
M_{II_{1}} \subseteq \left\{x \in X: \ 
\liminf_n \frac{1}{n} \sum_{j=1}^{n} e^{ \beta S_j(F)(x)} \ > \  0  \ \right\} \backslash M_I \
$$

\end{lemma}

\begin{proof}
Define $m_{0}$ to be the restriction of $m$ to $M_{II_{1}}$. By definition $m_{0}$ is equivalent to an invariant probability measure $\nu$, so by Proposition \ref{propadd4} we have a positive Borel function $k \in L^{1}(\nu)$ such that $k\circ \phi(x) = e^{\beta F(x)} k(x)$ for $\nu$-almost every $x$. By the pointwise ergodic theorem we have that
$$
\lim_{n\to \infty} \frac{1}{n} \sum_{i=0}^{n-1} k \circ \phi^{i}(x)
$$
converges for $\nu$-almost all $x$ to a positive number, yet since 
$$
k \circ \phi^{i}(x) = e^{\beta S_{i}(F)(x)} k(x)
$$
this implies that
$$
\frac{1}{n} \sum_{i=0}^{n-1} e^{\beta S_{i}(F)(x)} 
$$
converges to a positive number for $\nu$-almost all $x$. If $m_{0}(M_{II_{1}}\cap M_{I})>0 $ the relation $\mathcal{R}$ restricted to  $M_{II_{1}}\cap M_{I}$ is of type $II_{1}$. It follows from Proposition 3.2 in \cite{FM} that there then also exists an ergodic measure $m'$ on $M_{II_{1}}\cap M_{I}$ such that $m'$ is type $II_{1}$. By a result of K. Schmidt, Theorem 1.2 in \cite{Sc}, there is a $\phi$-invariant Borel set $B$ with $m'(B)=1$ and the property that $m''(B)=0$ for all ergodic $e^{\beta F}$-conformal measures $m''\neq m'$. However this contradicts Lemma \ref{25-10-19}, proving the Lemma.
\end{proof}

To describe the sets $M_{II_{\infty}}$ and $M_{III}$ we need the following result.

\begin{lemma}\label{14-08-19} Let $m$ be an $e^{\beta F}$-conformal measure. There is a (possibly empty) $\phi$-invariant Borel set $N \subseteq X$ with the properties that
\begin{itemize}
\item there is a Borel function $u : X \backslash N \to \mathbb R$ such that $\beta F(x) = u (\phi(x)) - u(x)$ for all $x \in X \backslash N$ and
\item when $A \subseteq N$ is a $\phi$-invariant Borel subset and $u : A \to \mathbb R$ a Borel function such that $\beta F(x) = u (\phi(x)) - u(x)$ for all $x \in A$ then $m(A) = 0$. 
\end{itemize}
\end{lemma}
\begin{proof} Call a $\phi$-invariant Borel set $A \subseteq X$ admissible when $m(A) > 0$ and there is a Borel function $u : A\to \mathbb R$ such that $\beta F(x) = u(\phi(x)) - u(x)$ for all $x \in A$. Assume that there is an admissible Borel set. By Zorns lemma there is then a maximal collection $A_i, i \in I$, of mutually disjoint admissible Borel sets. Note that this set is countable since $\sum_{i \in I} m(A_i)  \leq 1$. Then $N = X \backslash \bigcup_{i \in I} A_i$ will have the stated properties. If there are no admissible Borel sets, set $N = X$.
\end{proof}

\begin{lemma}
The set $M_{III}$ is contained in $N$ up to a $m$ null set.
\end{lemma}

\begin{proof}
Assume that $M_{III}\cap N^{C}$ has positive $m$-measure, and let $m_{0}$ denote the restriction of $m$ to this set. It follows from Lemma \ref{14-08-19} that there is a Borel function $u: X\to \mathbb{R}$ such that $\beta F(x)=u(\phi(x))-u(x)$ for $m_{0}$ almost all $x$. From Proposition \ref{propadd4} we get that $m_{0}$ is equivalent to an invariant measure, contradicting that $m$ restricted to $M_{III}$ is of type $III$. 

\end{proof}

Combining all of the above we now get the following description of the decomposition of the space $X$.

\begin{thm}
For the orbit equivalence relation on the set $X$ and the $e^{\beta F}$-conformal measure $m$ we get the following description of the decomposition of $X$ in \eqref{eqadd1}: 
\begin{itemize}
\item For all $n\in \mathbb{N}$ then 
$$
M_{I_{n}}=\left\{ x \in X: \ \phi^n(x) = x,  \ \phi^j(x) \neq x, \ 1 \leq j < n, \ \ S_n(F)(x) = 0 \right\} \ .
$$
\item $M_{I_{\infty}} = \left\{ x \in X: \
\sum_{n \in \mathbb Z}  e^{\beta S_n(F)(x)} \  < \ \infty \right\} $ .
\item $M_{II_{1}} = \left\{x \in X: \  
\liminf_n \frac{1}{n} \sum_{j=1}^{n} e^{ \beta S_j(F)(x)} \ > \  0  \ \right\} \setminus M_{I}$ .
\item $M_{III}=N$ where $N$ is defined in Lemma \ref{14-08-19}. 
\item $M_{II_{\infty}}=X\setminus (M_{I}\cup M_{II_{1}}\cup M_{III})$ .  
\end{itemize}
\end{thm}

\begin{proof}
By \eqref{eqadd1} it suffices to prove that the five sets $N$, $M_{II_{\infty}}$,
\begin{equation} \label{eqadd2}
\left\{ x \in X: \
\sum_{n \in \mathbb Z}  e^{\beta S_n(F)(x)} \  < \ \infty \right\} \ ,
\end{equation}
$$
\bigcup_{n \in \mathbb{N}}\left\{ x \in X: \ \phi^n(x) = x,  \ \phi^j(x) \neq x, \ 1 \leq j < n, \ \ S_n(F)(x) = 0 \right\}  \ \text{ and }
$$
\begin{equation}\label{eqadd3}
\left\{x \in X: \  
\liminf_n \frac{1}{n} \sum_{j=1}^{n} e^{ \beta S_j(F)(x)} \ > \  0  \ \right\} \setminus M_{I}
\end{equation}
are all disjoint up to $m$ null sets. The last three sets are easily seen to be disjoint from each other. Furthermore it is easy to check that defining a function $k$ on the last two sets by
$$
k(x)= \left(\liminf_n \frac{1}{n} \sum_{j=1}^{n} e^{ \beta S_j(F)(x)} \right)^{-1}
$$ 
we get that $k(\phi(x))=e^{\beta F(x) } k(x)$, and hence by taking logarithms and comparing to Lemma \ref{14-08-19} it follows that these two sets are disjoint from $N$. A similar argument works by considering the function
$$
k(x)= \left (\sum_{n \in \mathbb Z}  e^{\beta S_n(F)(x)} \right)^{-1}
$$
for $x$ in the set \eqref{eqadd2}.

Since $m$ restricted to $M_{II_{\infty}}$ is equivalent with an infinite invariant measure it follows from Proposition \ref{propadd4} that $M_{II_{\infty}}$ is disjoint from $N$. If $m(M_{II_{\infty}}\cap M_{I})>0 $ then Proposition 3.2 in \cite{FM} implies that there exists an ergodic measure $m_{0}$ on $M_{II_{\infty}}\cap M_{I}$ such that $m_{0}$ is type $II_{\infty}$, and using Theorem 1.2 in \cite{Sc} and Lemma \ref{25-10-19} as in the proof of Lemma \ref{lemaddp2} this gives a contradiction. A similar argument would prove that $M_{II_{\infty}}$ was disjoint from the set in \eqref{eqadd3} if we could prove that an ergodic $e^{\beta F}$ conformal probability measure concentrated on \eqref{eqadd3} was of type $II_{1}$. Assume therefore that $m_{0}$ is concentrated on the set \eqref{eqadd3}.  For $j \in \mathbb N$ and $f \in C(X)$, iterated application of \eqref{eqint} shows that
\begin{equation*}\label{30-12-18a}
\begin{split}
& \int_X f \circ \phi^{-j} \ \mathrm{d} m_{0} =  \int_X f e^{\beta S_j(F)(x)} \ \mathrm{d} m_{0} \ .
\end{split}
\end{equation*}
Hence 
\begin{equation*}\label{30-12-18}
\frac{1}{n}\sum_{j=1}^{n} m_{0} \circ \phi^j \ = \ H_n \ \mathrm{d} m_{0} \ ,
\end{equation*}
where
\begin{equation*}\label{16-01-19e}
H_n = \frac{1}{n} \sum_{j=1}^{n} e^{\beta S_j(F)(x)} \ .
\end{equation*}
By compactness of the Borel probability measures in the weak* topology there is a sequence $n_1 < n_2 < n_3 < \cdots$ in $\mathbb N$ and a Borel probability measure $\nu$ such that
\begin{equation*}\label{02-01-19a}
\lim_{i \to \infty} H_{n_i} \mathrm{d} m_{0} \ = \ \lim_{i \to \infty}\frac{1}{n_i}\sum_{j=1}^{n_i} m_{0} \circ \phi^j \ = \  \mathrm{d} \nu
\end{equation*}
in the weak*-topology. Note that $\nu$ is $\phi$-invariant. Set 
$$
g(x) = \liminf_i H_{n_i}(x) \ .
$$ 
It follows from Fatous lemma that
$$
\int_X fg \ \mathrm{d} m_{0} \leq \liminf_i \int_X f H_{n_i} \ \mathrm{d} m_{0} = \int_X f \ \mathrm{d} \nu
$$
for all non-negative $f \in C(X)$. This shows that $g \ \mathrm{d} m_{0}$ is absolutely continuous with respect to $\nu$.  It follows from the assumption that $g(x) > 0$
for $m_{0}$-almost all $x$ and hence that $m_{0}$ is absolutely continuous with respect $\nu$. This implies that $m_{0}$ is equivalent with a finite invariant measure, proving that measures concentrated on \eqref{eqadd3} are of type $II_{1}$. This proves that all five sets are disjoint, which proves the Theorem.
\end{proof}

The above theorem describes the decomposition of an equivalence relation in terms of $\beta F$ on the set $X$. We say that $m$ is of type $I$ when $m(M_{I}) = 1$. In Theorem \ref{15-08-19e} below we will describe the type of ergodic $e^{\beta F}$-conformal measures in terms which will make it easy to describe the corresponding von Neumann algebras. Before doing this we need a small elementary lemma.

\begin{lemma} \label{22-08-19}
An ergodic $e^{\beta F}$-conformal measure $m$ can only be equivalent to one $\sigma$-finite $\phi$-invariant measure up to multiplication by scalars.
\end{lemma}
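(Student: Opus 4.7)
The strategy is to suppose $\nu_1$ and $\nu_2$ are both $\sigma$-finite $\phi$-invariant Borel measures on $X$ equivalent to $m$, and to show that the Radon-Nikodym derivative $f = \mathrm{d}\nu_1/\mathrm{d}\nu_2$ is essentially a positive constant. Since $\nu_1 \sim m \sim \nu_2$, the derivative $f$ exists, is $\nu_2$-a.e.\ finite (because $\nu_1$ is $\sigma$-finite), and $\nu_2$-a.e.\ strictly positive (because $\nu_1 \sim \nu_2$).

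Next I would show that $f$ is $\phi$-invariant $\nu_2$-a.e. For any Borel set $A \subseteq X$, applying $\phi$-invariance of $\nu_1$ and then of $\nu_2$ yields
\begin{equation*}
\int_A f \, \mathrm{d}\nu_2 \ = \ \nu_1(A) \ = \ \nu_1(\phi^{-1}(A)) \ = \ \int_X (1_A \circ \phi)\, f \, \mathrm{d}\nu_2 \ = \ \int_A f\circ \phi^{-1} \, \mathrm{d}\nu_2 \ ,
\end{equation*}
where the last equality uses $\nu_2 \circ \phi = \nu_2$. Since this holds for every Borel $A$, $f = f\circ \phi^{-1}$ $\nu_2$-a.e., equivalently $f\circ \phi = f$ $\nu_2$-a.e.

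Then I would transfer the $\phi$-ergodicity of $m$ to the equivalent measure $\nu_2$: a Borel set $B$ with $\phi^{-1}(B) = B$ satisfies $m(B) \in \{0,1\}$, and since $m$ and $\nu_2$ have the same null sets, $\nu_2(B) = 0$ or $\nu_2(X \setminus B) = 0$. For each $a \in \mathbb{R}$ the level set $\{f > a\}$ coincides with its $\phi^{-1}$-image up to a $\nu_2$-null set, so can be adjusted by a null set to a strictly $\phi$-invariant Borel set; hence either $\nu_2(\{f>a\}) = 0$ or $\nu_2(\{f\leq a\}) = 0$. Setting $c := \inf\{a \in \mathbb{R} : \nu_2(\{f > a\}) = 0\}$, a countable limiting argument along rationals gives $f = c$ $\nu_2$-a.e., and $c \in (0,\infty)$ since $f$ is $\nu_2$-a.e.\ finite and positive. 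Therefore $\nu_1 = c\,\nu_2$.

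The only mild subtlety is that $\nu_2$ is $\sigma$-finite rather than finite, so the standard "ergodic implies invariant functions are constant" statement must be deployed via equivalence with the probability measure $m$ rather than directly on $\nu_2$; this is the sole place care is needed, and it is handled by the translation of $m$-null sets to $\nu_2$-null sets in the previous step. No deeper obstacle appears.
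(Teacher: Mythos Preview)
Your proof is correct and follows essentially the same route as the paper: take the Radon--Nikodym derivative $f = \mathrm{d}\nu_1/\mathrm{d}\nu_2$, show $f\circ\phi = f$ almost everywhere using the $\phi$-invariance of both $\nu_1$ and $\nu_2$, and conclude from the ergodicity of $m$ that $f$ is constant. The only difference is that you spell out explicitly, via level sets and the transfer of null sets from $m$ to $\nu_2$, why an a.e.\ $\phi$-invariant Borel function must be a.e.\ constant in the $\sigma$-finite setting, whereas the paper asserts this in a single sentence.
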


\begin{proof}
Assume that $\mu_{1}$ and $\mu_{2}$ are equivalent to $m$ and that they both are $\sigma$-finite and $\phi$-invariant. By the Radon-Nikodym theorem there is a $\mu_{2}$ measurable function $f$ such that
\begin{equation*}
\mu_{1}(B)=\int_{B} f \ \mathrm{d}\mu_{2} 
\end{equation*}  
for all Borel sets $B$. Since 
\begin{align*}
& \int_{B} f \ \mathrm{d}\mu_{2}  = \mu_{1}(B)=\mu_{1}(\varphi(B))=\int_{\varphi(B)} f \ \mathrm{d}\mu_{2} \\
&= \int_{X} 1_{B}\circ \varphi^{-1} f \ \mathrm{d}\mu_{2} 
=\int_{B}  f\circ \varphi \ \mathrm{d}\mu_{2} \ ,
\end{align*}  
it follows that $f=f\circ \varphi$ almost everywhere. The ergodicity of $m$ implies that $f$ is constant, completing the proof.
\end{proof}

\begin{thm}\label{15-08-19e} Let $m$ be an ergodic $e^{\beta F}$-conformal measure. 
\begin{itemize}
\item[$\bullet$] \ \ $m$ is of type $I_p$ if and only if $m$ is atomic and concentrated on an $F$-cyclic $\phi$-orbit with minimal period $p$.
\item[$\bullet$] \ \ $m$ is of type $I_{\infty}$ if and only if $m$ is atomic and concentrated on a single infinite $\phi$-orbit.
\item[$\bullet$] \ \ $m$ is of type $II_1$ if and only if $m$ is equivalent to a $\phi$-invariant non-atomic Borel probability measure.
\item[$\bullet$] \ \ $m$ is of type $II_{\infty}$ if and only $m$ is equivalent to an infinite $\sigma$-finite non-atomic $\phi$-invariant measure.
\item[$\bullet$] \ \ $m$ is of type $III$ if and only if $m$ is singular with respect to all $\sigma$-finite $\phi$-invariant Borel measures. 
\end{itemize}
\end{thm}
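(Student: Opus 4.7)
My plan is to prove the forward implications ($\Rightarrow$) of all five bullets and then deduce the converses from a partition argument. The five descriptions on the right are mutually exclusive: the first two are atomic and concentrated on disjoint kinds of orbits, the middle two are non-atomic and, by Lemma~\ref{22-08-19}, distinguished by the total mass of the (unique up to scalar) equivalent $\sigma$-finite $\phi$-invariant measure, while (e) is incompatible with the other four. They are also exhaustive for any ergodic conformal $m$: ergodicity implies $m$ is extremal in the simplex of $e^{\beta F}$-conformal measures, and Lemma~\ref{15-08-19b} applied to the Lebesgue decomposition of $m$ against any $\sigma$-finite $\phi$-invariant $\mu$ forces $m$ either to be singular with respect to every such $\mu$ (case (e)) or to be equivalent to some such $\mu$ (cases (c), (d)); the remaining atomic cases (a), (b) come from Lemma~\ref{25-10-19}. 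Hence proving each forward implication automatically yields every converse.

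Type $III\Rightarrow$~(e) is Proposition~\ref{15-08-19c}. For $II_1\Rightarrow$~(c), Lemma~\ref{14-08-19b} produces a $\phi$-invariant probability measure $\nu$ with $d\nu=e^{-u}\,dm$, and after restricting $\nu$ to $\{e^{-u}>0\}$ I obtain a $\phi$-invariant probability measure equivalent to $m$. Non-atomicity follows from non-atomicity of $m$, which is forced by $m(M_I)=0$: an atom combined with ergodicity and Lemma~\ref{25-10-19} would throw $m$ into some $M_{I_p}$ or $M_{I_\infty}$. For $II_\infty\Rightarrow$~(d), the inclusion $M_{II_\infty}\subseteq X\setminus M_{III}$ and Lemma~\ref{14-08-19} give a Borel $h$ with $\beta F=h\circ\phi-h$, and Lemma~\ref{15-08-19d} then yields a $\sigma$-finite $\phi$-invariant $\nu$ equivalent to $m$; non-atomicity is by the same atom-to-$M_I$ argument. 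To rule out $\nu(X)<\infty$ I would apply Birkhoff to $e^u\in L^1(\nu)$ (if normalized): $\frac{1}{n}\sum_{j=1}^{n} e^{u\circ\phi^j}\to\int e^u\,d\nu=m(X)=1$ $\nu$-a.e., hence $H_n(x)=e^{-u(x)}\cdot\frac{1}{n}\sum_{j=1}^{n} e^{u(\phi^j x)}\to e^{-u(x)}>0$ $m$-a.e., putting $m$ into $M_{II_1}$ and contradicting $m(M_{II_\infty})=1$.

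For $I_\infty\Rightarrow$~(b), Lemma~\ref{14-08-19a} supplies $u=-\log H$ on $M_{I_\infty}$ and Lemma~\ref{15-08-19d} a $\sigma$-finite $\phi$-invariant $\nu\sim m$ with $d\nu=H\,dm$. The core point is that $\phi$ is dissipative on $M_{I_\infty}$: for the finite-$\nu$-measure sets $A_N=\{x\in M_{I_\infty}:H(x)\le N\}$ and each $x\in M_{I_\infty}$, the relation $H(\phi^n x)=e^{-\beta S_n(F)(x)}H(x)$ together with $\sum_n e^{\beta S_n(F)(x)}<\infty$ (hence $e^{-\beta S_n(F)(x)}\to\infty$ as $|n|\to\infty$) implies $\#\{n:\phi^n x\in A_N\}<\infty$ for $m$-a.e.\ $x$. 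Invoking the standard fact that an ergodic dissipative non-singular transformation on a standard Borel space is concentrated on a single orbit, I conclude $m$ is atomic on one orbit, which is infinite since any periodic point forces $H$ to diverge. The case $I_p\Rightarrow$~(a) is handled by copying the Hausdorff-metric partition argument from the proof of Lemma~\ref{30-07-19dd}: writing $M_{I_p}=\bigsqcup_i A^k_i$ into $\phi$-invariant Borel pieces each consisting of orbits within Hausdorff distance $1/k$, ergodicity picks out a single $A^k_{i_k}$ of full $m$-measure, and the intersection $\bigcap_k A^k_{i_k}$ is a single orbit; it has minimal period $p$ and is $F$-cyclic because $S_p(F)=0$ on $M_{I_p}$.

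The step I expect to be the main obstacle is the dissipativity argument in the $I_\infty$ case: neither the "ergodic plus dissipative implies atomic on a single orbit" structure result nor any wandering-set machinery has been isolated earlier in the paper, so this step will either need a short self-contained argument (for instance, using the level sets of $u$ to construct an explicit Borel cross-section of the orbits in $M_{I_\infty}$ and invoking ergodicity to collapse the cross-section to a point) or a clean citation to the ergodic theory of non-singular transformations. The remaining pieces then assemble routinely from Lemmas~\ref{14-08-19a}--\ref{22-08-19} and Proposition~\ref{15-08-19c}.
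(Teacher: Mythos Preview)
Your overall logical architecture is sound and the forward implications for types $II_1$, $II_\infty$ and $III$ are essentially the same arguments the paper uses (your Birkhoff computation in the $II_\infty$ direction is exactly the paper's argument for the converse of the $II_1$ bullet, just deployed contrapositively). The genuine difference is in the $I$-type cases. The paper does \emph{not} argue via dissipativity or the Hausdorff-metric partition; instead it imports a single external fact, Theorem~1.2 of Schmidt \cite{Sc}: for the ergodic $m$ there is a $\phi$-invariant Borel set $B$ with $m(B)=1$ and $m'(B)=0$ for every other ergodic $e^{\beta F}$-conformal $m'$. Then for $m$-a.e.\ $x\in M_{I_\infty}$ the atomic conformal measure on the orbit of $x$ supplied by Lemma~\ref{25-10-19}(3) must coincide with $m$ itself (otherwise it would give $B$ measure zero while living inside $B$), so $m$ is that atomic measure; the $I_p$ case is identical using Lemma~\ref{25-10-19}(2). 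This is shorter than your route but costs an outside reference.

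Your alternative for $I_\infty$ is correct, and your instinct that the dissipativity step is the only non-routine point is accurate. It can be made self-contained in a few lines without citing the Hopf decomposition: with $A_N=\{H\le N\}$, the set $W_N=\{x\in A_N:\phi^n(x)\notin A_N\text{ for all }n<0\}$ is wandering and meets every orbit through $A_N$, hence $\bigcup_n\phi^n(W_N)$ is $\phi$-invariant of positive $m$-measure for large $N$; any Borel $B\subseteq W_N$ with $0<m(B)<m(W_N)$ would give a $\phi$-invariant set $\bigcup_n\phi^n(B)$ of measure $1$ by ergodicity, yet disjoint from $W_N\setminus B$ by the wandering property, a contradiction, so $W_N$ is an atom and $m$ lives on a single orbit. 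Your $I_p$ argument via the Hausdorff-metric partition (borrowed from Lemma~\ref{30-07-19dd}) is also fine and again avoids Schmidt. So your proof trades one external citation for a short wandering-set lemma; both approaches work.
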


\begin{proof} By Theorem 1.2 in \cite{Sc}, there is a $\phi$-invariant Borel set $B$ with $m(B)=1$ and the property that $m'(B)=0$ for all ergodic $e^{\beta F}$-conformal measures $m'\neq m$. In combination with 2) and 3) of Lemma \ref{25-10-19} the statements on the type $I_p$ and the type $I_{\infty}$ cases follow from this. Then, by 1) of Lemma \ref{25-10-19}, in the remaining cases $m$ is not atomic, and the last three cases then follows from Lemma \ref{22-08-19} and the definition of these types, c.f. Definition 3.3 in \cite{FM}.
\end{proof}


\begin{remark}\label{23-10-19e} In a recent preprint K. Athanassopoulos describes various results pertaining to questions about conformal measures equivalent to invariant ones, and he describes a sufficient condition for the existence of a conformal measure of type $II_1$ equivalent to a given ergodic invariant measure, cf. Main Result in \cite{A}.
\end{remark} 

\subsection{Factor types and crossed products}

Let $m$ be an $e^{\beta F}$-conformal measure and consider the elements of $L^{\infty}(m)$ as multiplication operators on $L^2(m)$.  Define $\pi : C(X) \to B\left(l^2(\mathbb Z, L^2(m))\right)$ such that
$$
\left(\pi(f)\psi\right)(k) \ = \ f\circ \phi^k \psi(k) 
$$
and $\lambda_k \in B\left(l^2(\mathbb Z, L^2(m))\right)$ such that
$$
\left(\lambda_k \psi\right)(j) = \psi(j-k) \ .
$$
Then $\lambda_k \pi(f)\lambda_{-k} \ = \ \pi( f \circ \phi^{-k})$ and we get a $*$-homomorphism $\pi : C(X) \rtimes_{\phi} \mathbb Z \to  B\left(l^2(\mathbb Z, L^2(m))\right)$ such that
$\pi(fU^k) \ = \ \pi(f)\lambda_k$. Let $u \in l^2(\mathbb Z, L^2(m))$ be the element such that $u(k) = 0$ when $k\neq 0$ and $u(0) = 1$. The triple $(l^2(\mathbb Z, L^2(m)), \pi, u)$ is then (isomorphic to) the GNS-representation of the $\beta$-KMS state $\omega_m$. Since $\pi\left(C(X)\rtimes_{\phi} \mathbb Z\right)''$ is $*$-isomorphic to the von Neumann algebra crossed product $L^{\infty}(m) \rtimes_{\phi} \mathbb Z$, cf. \cite{KR}, we conclude that

\begin{lemma}\label{06-09-19} The von Neumann algebra generated by the GNS-representation of $\omega_m$ is isomorphic to  $L^{\infty}(m) \rtimes_{\phi} \mathbb Z$.
\end{lemma}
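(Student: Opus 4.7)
The plan is to verify the two assertions that the paragraph preceding the lemma treats as immediate: first, that the triple $(l^2(\mathbb Z, L^2(m)), \pi, u)$ is (isomorphic to) the GNS-representation of $\omega_m$; and second, that the bicommutant $\pi(C(X)\rtimes_{\phi} \mathbb Z)''$ coincides with the von Neumann algebra crossed product $L^{\infty}(m) \rtimes_{\phi} \mathbb Z$.

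For the first point, I would take $a = fU^n$ with $f \in C(X)$ and $n \in \mathbb Z$ and unwind the definitions: $\lambda_n u$ is the element of $l^2(\mathbb Z, L^2(m))$ supported at coordinate $n$ with value the constant function $1$, so $\pi(fU^n)u = \pi(f)\lambda_n u$ is supported at coordinate $n$ with value $f \circ \phi^n$. Pairing with $u$ then yields $\int_X f\, \mathrm{d} m$ when $n=0$ and $0$ otherwise, which is precisely $\omega_m(E(fU^n)) = \omega_m(fU^n)$ by the formula in Lemma \ref{lemma22}. Cyclicity of $u$ follows from the same computation: as $f$ ranges over $C(X)$, the vectors $\pi(fU^n)u$ span a dense subspace of the $n$-th coordinate $L^2(m) \hookrightarrow l^2(\mathbb Z, L^2(m))$, and the algebraic sum of these coordinates is dense. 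Uniqueness of the GNS construction gives the identification.

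For the second point, the spatial von Neumann algebra crossed product $L^{\infty}(m) \rtimes_{\phi} \mathbb Z$ is, by definition, the von Neumann algebra in $B(l^2(\mathbb Z, L^2(m)))$ generated by the operators $\tilde{\pi}(g)$ with $(\tilde{\pi}(g)\psi)(k) = (g \circ \phi^k) \psi(k)$ for $g \in L^\infty(m)$ together with the translations $\lambda_k$, cf.\ \cite{KR}. Since $\pi(f) = \tilde{\pi}(f)$ for $f \in C(X)$ and $\lambda_k = \pi(U^k)$ already lies in $\pi(C(X)\rtimes_{\phi} \mathbb Z)$, it suffices to show that the SOT-closure of $\{\pi(f) : f \in C(X)\}$ contains $\tilde{\pi}(g)$ for every $g \in L^\infty(m)$. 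Given such $g$, Lusin's theorem and regularity of $m$ yield a bounded sequence $\{f_j\} \subset C(X)$ with $\sup_j \lVert f_j \rVert_\infty < \infty$ and $f_j \to g$ $m$-almost everywhere. The $e^{\beta F}$-conformality of $m$ makes $m \circ \phi^{-k}$ equivalent to $m$ for every $k \in \mathbb Z$, hence $f_j \circ \phi^k \to g \circ \phi^k$ also $m$-almost everywhere, and dominated convergence on each coordinate of $l^2(\mathbb Z, L^2(m))$ delivers the required SOT-convergence $\pi(f_j) \to \tilde{\pi}(g)$.

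The only substantive step is the SOT-identification of the closure of $\pi(C(X))$ with $\tilde{\pi}(L^{\infty}(m))$, as this is the place where the hypothesis on $m$ actually enters; everything else is a matter of unwinding the definitions of the GNS construction and of the spatial crossed product.
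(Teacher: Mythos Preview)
Your proposal is correct and follows exactly the approach the paper takes: the paper simply asserts, in the paragraph preceding the lemma, that the triple $(l^2(\mathbb Z, L^2(m)),\pi,u)$ is the GNS representation of $\omega_m$ and that $\pi(C(X)\rtimes_{\phi}\mathbb Z)''$ coincides with $L^{\infty}(m)\rtimes_{\phi}\mathbb Z$ by reference to \cite{KR}, without further argument. You have supplied the details the paper leaves implicit, including the observation that conformality of $m$ is what makes $m\circ\phi^{-k}$ equivalent to $m$ and hence makes the SOT-approximation work coordinatewise.
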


Note that $\phi$ acts freely on $L^{\infty}(m)$ when $m$ annihilates all $\phi$-periodic orbits. We can therefore combine Theorem \ref{15-08-19e} above with Proposition 8.6.10 in \cite{KR} to  obtain the following

\begin{thm}\label{15-08-19g} Let $m$ be an $e^{\beta F}$-conformal measure and $\pi_m$ the GNS representation of the $\beta$-KMS state $\omega_m$, cf. Lemma \ref{lemma21}.
\begin{itemize}
\item $
\pi_m\left(C(X) \rtimes_{\phi} \mathbb Z\right)'' \simeq L^{\infty} (m) \rtimes_{\phi} \mathbb Z \ . $
\item Let $t \in \left\{I_{\infty},II_1,II_{\infty}, III\right\}$ and assume that $m$ is ergodic. Then
$\pi_m\left(C(X) \rtimes_{\phi} \mathbb Z\right)''$ is a factor of type $t$ if and only if $m$ is of type $t$.
\end{itemize}
\end{thm}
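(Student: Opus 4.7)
The first bullet is exactly Lemma~\ref{06-09-19}, so only the type correspondence requires work. My plan is to reduce to the classical Murray--von Neumann classification of the type of $L^{\infty}(m) \rtimes_{\phi} \mathbb Z$ for a free ergodic non-singular $\mathbb Z$-action, namely Proposition~8.6.10 of \cite{KR}, and then to match each of its conditions on $(X,m,\phi)$ to the measure-theoretic description of the types of $m$ already supplied by Theorem~\ref{15-08-19e}.

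The first step is to check that in each of the four cases $m$ is carried by a $\phi$-invariant Borel set on which $\phi$ acts freely, i.e.\ that $m$ assigns no mass to $\phi$-periodic points. For $t = I_\infty$ this is built into the definition of $M_{I_\infty}$: if $x$ has minimal $\phi$-period $p$ then the subseries of $\sum_{n \in \mathbb Z} e^{\beta S_n(F)(x)}$ along multiples of $p$ is either an infinite constant (when $S_p(F)(x)=0$) or a two-sided geometric series (when $S_p(F)(x)\neq 0$), hence divergent, so $x \notin M_{I_\infty}$. For $t \in \{II_1, II_\infty, III\}$ the set of $\phi$-periodic points is a $\phi$-invariant Borel set disjoint from $M_t$; if $m$ charged it, ergodicity would force it to have full $m$-measure, and then parts (1)--(2) of Lemma~\ref{25-10-19} would concentrate $m$ on a single $F$-cyclic finite orbit, placing $m$ in some $M_{I_p}$ and contradicting $m(M_t) = 1$. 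Hence after passing to a full-measure $\phi$-invariant Borel subset the action is free.

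With freeness secured, the second step is to invoke Proposition~8.6.10 of \cite{KR}, which states that for a free ergodic non-singular $\mathbb Z$-action on a standard probability space, $L^{\infty}(m) \rtimes_{\phi} \mathbb Z$ is a factor of type $I_\infty$ exactly when $m$ is purely atomic and concentrated on a single infinite orbit; of type $II_1$ exactly when $m$ is equivalent to a $\phi$-invariant Borel probability measure; of type $II_\infty$ exactly when $m$ is equivalent to an infinite $\sigma$-finite $\phi$-invariant Borel measure; and of type $III$ exactly when $m$ is singular with respect to every $\sigma$-finite $\phi$-invariant Borel measure. By Theorem~\ref{15-08-19e} these four conditions are respectively equivalent to $m(M_{I_\infty})=1$, $m(M_{II_1})=1$, $m(M_{II_\infty})=1$, and $m(M_{III})=1$, giving the desired equivalence. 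The main technical content has already been absorbed into Theorem~\ref{15-08-19e}, so the only step that requires genuine attention here is the freeness reduction; the remainder is a direct application of the dictionary provided by \cite{KR}.
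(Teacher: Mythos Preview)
Your proof follows exactly the paper's approach: the first bullet is Lemma~\ref{06-09-19}, and the second is obtained by combining the measure-theoretic type classification of Theorem~\ref{15-08-19e} with Proposition~8.6.10 of \cite{KR}, once freeness of the action on $L^\infty(m)$ is secured. One small slip in your freeness argument: the set of periodic points is not literally disjoint from $M_{II_1}$ (a periodic $x$ with $\beta S_p(F)(x)>0$ can lie there), and Lemma~\ref{25-10-19}(1) only applies to \emph{atomic} measures; the cleanest fix is to read freeness directly off Theorem~\ref{15-08-19e}, since for $t\in\{II_1,II_\infty\}$ the measure is non-atomic and for $t=III$ any atom on a periodic orbit would yield a non-singular invariant measure on that orbit.
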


When $m$ is of type $I_p$ the $\beta$-KMS state $\omega_m$ is not extremal, $\phi$ does not act freely on $L^{\infty}(m)$ and $\pi_m\left(C(X) \rtimes_{\phi} \mathbb Z\right)''$ is not a factor; it's a copy of $M_p(\mathbb C)\otimes L^{\infty}(\mathbb T)$.

\section{$e^{\beta F}$-conformal measures of all types}\label{types}

It is time to show that the theory we have developed is not vacuous by showing that all factor types actually occur; even when we restrict to minimal homeomorphisms. To do so in order, note that Example \ref{31-07-19a} provides us with $e^{\beta F}$-conformal measures of type $I_p$. The easiest and most informative way of realizing type $II_1$-examples is perhaps the following whose proof we leave to the reader.

\begin{prop}\label{03-10-19} Let $H : X \to \mathbb R$ be a continuous function and set $F = H \circ \phi - H$. For all $\beta \in \mathbb R$ the map 
$$
\lambda \mapsto \frac{e^{\beta H}}{\int_X e^{\beta H} \mathrm{d}\lambda }\mathrm{d}\lambda
$$ 
is a homeomorphism from the set of $\phi$-invariant Borel probability measures $\lambda$ onto the set of $e^{\beta F}$-conformal measures.
\end{prop}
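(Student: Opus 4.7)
The plan is to exhibit an explicit inverse given by essentially the same formula with $H$ replaced by $-H$, and then to check continuity in the weak* topology on both sides.

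First I would verify that $\Phi(\lambda) := \frac{e^{\beta H}}{\int_X e^{\beta H}\,\mathrm{d}\lambda}\mathrm{d}\lambda$ is indeed $e^{\beta F}$-conformal whenever $\lambda$ is a $\phi$-invariant Borel probability measure. Using \eqref{eqint} as the goal and the identity $F = H\circ\phi - H$, the key manipulation is, for $f\in C(X)$,
\begin{equation*}
\int_X f\circ\phi \cdot e^{\beta F}\,\mathrm{d}\Phi(\lambda) = \frac{1}{\int_X e^{\beta H}\,\mathrm{d}\lambda}\int_X (f e^{\beta H})\circ\phi\,\mathrm{d}\lambda,
\end{equation*}
which by $\phi$-invariance of $\lambda$ equals $\int_X f\,\mathrm{d}\Phi(\lambda)$.

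Next I would define the candidate inverse $\Psi(m) := \frac{e^{-\beta H}}{\int_X e^{-\beta H}\,\mathrm{d}m}\mathrm{d}m$ for an $e^{\beta F}$-conformal measure $m$, and verify that $\Psi(m)$ is $\phi$-invariant. The main trick is the identity $e^{-\beta H} = e^{-\beta H\circ\phi}\cdot e^{\beta F}$; applied inside $\int_X f\circ\phi\cdot e^{-\beta H}\,\mathrm{d}m$ and followed by \eqref{eqint}, this gives
\begin{equation*}
\int_X f\circ\phi\cdot e^{-\beta H}\,\mathrm{d}m = \int_X (f\cdot e^{-\beta H})\circ\phi\cdot e^{\beta F}\,\mathrm{d}m = \int_X f\cdot e^{-\beta H}\,\mathrm{d}m,
\end{equation*}
which after normalisation yields $\int f\circ\phi\,\mathrm{d}\Psi(m) = \int f\,\mathrm{d}\Psi(m)$. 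A direct substitution then shows $\Psi\circ\Phi = \id$ and $\Phi\circ\Psi = \id$; both the numerator densities and the normalising constants collapse.

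Finally, for the topological statement, I would note that both domains are compact Hausdorff in the weak* topology (being weak*-closed subsets of the probability measures on $X$, a fact we have already used in, e.g., the proof of Theorem \ref{thm41} and Lemma \ref{14-08-19b}). Since $e^{\beta H}$ is continuous, $\lambda \mapsto \int_X e^{\beta H}\,\mathrm{d}\lambda$ is continuous and strictly positive, so $\lambda \mapsto \int_X f\,\mathrm{d}\Phi(\lambda) = \frac{\int_X f e^{\beta H}\,\mathrm{d}\lambda}{\int_X e^{\beta H}\,\mathrm{d}\lambda}$ is continuous for every $f\in C(X)$; hence $\Phi$ is weak*-continuous, and a continuous bijection between compact Hausdorff spaces is a homeomorphism. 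There is no essential obstacle here; the only point worth care is to keep track of the direction of \eqref{eqint} when manipulating the conformal measure, which is the reason I prefer to verify $\Psi(m)$ is invariant directly rather than invoke \eqref{eqconf} on Borel sets.
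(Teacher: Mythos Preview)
Your argument is correct and complete: the verification that $\Phi$ lands in the conformal measures, the explicit inverse $\Psi$, and the compact--Hausdorff continuity argument are all sound. The paper does not give a proof of this proposition (it is explicitly left to the reader), and your approach is exactly the routine verification one expects.
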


In combination with Theorem \ref{08-03-19c} it follows that all  $\beta$-KMS states correspond to $e^{\beta F}$-conformal measures of type $II_1$ when $\alpha^F$ inner and there are no finite $F$-cyclic $\phi$-orbits.

It is considerably harder to realize the remaining three types, but they all occur for well-chosen diffeomorphisms of the circle. This follows from work of Katznelson, \cite{K}, Herman, \cite{He}, and Douady and Yoccoz, \cite{DY}, as we now explain.

\subsection{Diffeomorphisms of the circle and $e^F$-conformal measures of type $I_{\infty}$, $II_{\infty}$ and $III$}

To exhibit $e^{\beta F}$-conformal measures of type $II_{\infty}$ and type $III$ we rely on the following result by Katznelson, which is a combination of Theorem 3.2 and Theorem 3.3 from Part II of \cite{K}.

\begin{thm}\label{Katz}[Katznelson] 
Let $\lambda$ denote Haar measure on the circle $\mathbb{T}$. 
\begin{enumerate}
\item There exists an orientation preserving $C^{\infty}$-diffeomorphism $\phi_{1}$ of $\mathbb{T}$ with irrational rotation number such that there exists an infinite $\sigma$-finite $\phi_{1}$-invariant measure $m$ equivalent to $\lambda$.
\item There exists an orientation preserving $C^{\infty}$-diffeomorphism $\phi_{2}$ of $\mathbb{T}$ with irrational rotation number such that there does not exist any $\sigma$-finite $\phi_{2}$-invariant measure equivalent to $\lambda$.
\end{enumerate}
\end{thm}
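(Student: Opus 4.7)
The plan is to invoke Katznelson's constructions from \cite{K} directly, since both statements are due to him. To indicate the underlying strategy I would outline the Anosov--Katok fast approximation by conjugation as he uses it. One builds the desired $\phi_i$ as a $C^{\infty}$-limit of diffeomorphisms of the form
$$
\phi_n \ = \ h_n^{-1}\circ R_{p_n/q_n}\circ h_n,
$$
where $R_{\alpha}$ denotes rotation by $\alpha$ and $h_n$ is a carefully chosen $C^{\infty}$ circle diffeomorphism. The rationals $p_n/q_n$ are chosen to converge very rapidly to an irrational $\alpha$, and the $h_n$ are adjusted so that $\phi_n$ converges in every $C^k$-norm to a limit $\phi_i$ with rotation number $\alpha$, while at the same time the distortion of Haar measure $\lambda$ along orbits is driven toward a prescribed behaviour.

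For part (1), the conjugacies $h_n$ should be arranged so that in the limit the Ces\`aro averages $\frac{1}{n}\sum_{j=1}^n \lambda\circ\phi_1^j$ admit a nontrivial absolutely continuous weak* limit. An argument parallel to the proof of Lemma \ref{14-08-19b}, applied with $F = -\log D\phi_1$ and $\beta = 1$ so that $\lambda$ becomes $e^F$-conformal, then produces a nontrivial $\sigma$-finite $\phi_1$-invariant Borel measure $m$ absolutely continuous with respect to $\lambda$; the construction is moreover arranged so that $m(\mathbb T)=\infty$, placing $\lambda$ in the type $II_{\infty}$ regime of Definition \ref{15-08-19}, and a symmetric argument for $\phi_1^{-1}$ yields the reverse absolute continuity, so that $m$ is actually equivalent to $\lambda$.

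For part (2), the $h_n$ should be tuned so that Haar measure, viewed as an $e^F$-conformal measure for $F = -\log D\phi_2$ with $\beta = 1$, is of type $III$. Concretely, one makes $\log D\phi_n$ oscillate along orbits wildly enough that no subsequence of the averages of $\lambda\circ\phi_2^j$ admits a nontrivial absolutely continuous limit; combined with Proposition \ref{15-08-19c} this forces $\lambda$ to be singular with respect to every $\sigma$-finite $\phi_2$-invariant Borel measure.

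The main obstacle, and the technical heart of Katznelson's argument, is the tension between the $C^{\infty}$-convergence of the conjugates $\phi_n$, which demands that each $h_n$ be tame in every $C^k$-norm, and the need for $h_n$ to introduce enough orbit-wise distortion of $\lambda$ to produce the prescribed pathological ergodic behaviour in the limit. Balancing these demands simultaneously is what makes the two constructions delicate, and for the full quantitative estimates we refer to \cite{K}.
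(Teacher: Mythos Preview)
The paper does not prove this theorem; it simply records it as ``a combination of Theorem 3.2 and Theorem 3.3 from Part II of \cite{K}'' and moves on. Your opening line --- invoke Katznelson's constructions directly --- is therefore exactly what the paper does, and would already suffice.

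Your additional sketch is a reasonable gesture toward the Anosov--Katok mechanism, but it contains a couple of slips worth flagging. First, the sign of the potential is wrong: by equation \eqref{etype2} (Lemma \ref{02-10-19}) one has $\lambda(\phi(B)) = \int_B D(\phi)\,\mathrm{d}\lambda$, so $\lambda$ is $e^{F}$-conformal for $F = \log D(\phi)$, not $-\log D(\phi)$. Second, your appeal to Lemma \ref{14-08-19b} for part (1) is misplaced: that lemma is about the type $II_1$ situation and produces a \emph{finite} invariant probability measure equivalent to $m$, whereas here you need an \emph{infinite} $\sigma$-finite one. The relevant dichotomy in the paper's language is that $\lambda$ should land in $M_{II_\infty}$, and the invariant measure then comes from Lemma \ref{15-08-19d} once one knows the cohomological equation has a Borel solution --- but establishing that solution, and controlling its integrability, is precisely the hard content of Katznelson's construction, not something the paper's lemmas supply. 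Similarly for part (2), invoking Proposition \ref{15-08-19c} presupposes that $\lambda$ is of type $III$, which is essentially a restatement of what is to be shown rather than a route to it.

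None of this affects the bottom line: the statement is quoted from \cite{K} and no independent proof is expected here.
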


\begin{lemma}\label{02-10-19}
Let $\phi$ be an orientation preserving $C^{2}$ diffeomorphism of $\mathbb{T}$ with irrational rotation number, and let $\lambda$ denote the Haar measure on $\mathbb{T}$. Then $\phi$ is minimal and $\lambda$ is $\phi$-ergodic. Furthermore, 
\begin{equation} \label{etype2}
\lambda(\phi(B))=\int_{B} D(\phi) \ \mathrm{d} \lambda 
\end{equation}
for all Borel sets $B\subseteq \mathbb{T}$, where $D(\phi)$ is the differential of $\phi$. 
\end{lemma}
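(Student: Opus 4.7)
The plan is to handle the three assertions separately: the first and third are essentially standard, and the real substance is the ergodicity of $\lambda$.

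For minimality, I would invoke \emph{Denjoy's theorem}: any orientation-preserving $C^2$ (more generally, $C^1$ with $D\phi$ of bounded variation) diffeomorphism of $\mathbb T$ with irrational rotation number $\alpha$ has no wandering intervals and is topologically conjugate to the rotation $R_\alpha$ via a homeomorphism $h$ satisfying $h\circ\phi = R_\alpha\circ h$. Since $R_\alpha$ is minimal and minimality is preserved by topological conjugacy, $\phi$ is minimal.

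For the identity \eqref{etype2}, I would just apply the $C^1$ change-of-variables formula. Since $\phi$ is orientation preserving, $D\phi>0$, so for any Borel set $B\subseteq\mathbb T$,
\begin{equation*}
\lambda(\phi(B)) \ = \ \int_{\mathbb T} 1_{\phi(B)} \ \mathrm{d}\lambda \ = \ \int_{\mathbb T} (1_{\phi(B)}\circ\phi)\, D\phi \ \mathrm{d}\lambda \ = \ \int_B D\phi \ \mathrm{d}\lambda \ .
\end{equation*}

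For $\phi$-ergodicity of $\lambda$ I would combine the Denjoy conjugacy with a bounded-distortion argument. Because $\phi$ is $C^2$, $\log D\phi$ has bounded variation, and \emph{Denjoy's inequality} gives a uniform bound on $V(\log D\phi^n|_I)$ whenever $I,\phi(I),\ldots,\phi^{n-1}(I)$ are pairwise disjoint. Letting $q_n$ be the denominators of the continued-fraction convergents of $\alpha$ and using the conjugacy $h$ to transport the combinatorics of the orbit of $R_\alpha$ back to $\phi$, the intervals bounded by consecutive points of $\{x,\phi(x),\dots,\phi^{q_n-1}(x)\}$ form a Denjoy tower to which the distortion estimate applies. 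Suppose $B$ is $\phi$-invariant with $\lambda(B)>0$ and let $x$ be a Lebesgue density point of $B$. The bounded-distortion estimate transfers the statement ``$B$ has Lebesgue density $1$ at $x$'' to each iterate $\phi^k(x)$, $0\leq k<q_n$, with distortion constant independent of $n$, and since these iterates are $1/q_n$-dense (via $h$), one concludes that $B$ has Lebesgue density arbitrarily close to $1$ on all of $\mathbb T$, hence $\lambda(B)=1$.

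The main obstacle is the ergodicity step: Denjoy's distortion bound itself is classical, but packaging it into a clean density argument requires some care with how the conjugacy $h$ distorts Lebesgue measure (note that $h$ need not be absolutely continuous, as Katznelson's examples show, so one must work entirely with $\phi$ rather than push things over to $R_\alpha$). I would expect to cite Herman's thesis or Katok--Hasselblatt for the distortion lemma rather than reprove it, and then spell out only the density-point argument.
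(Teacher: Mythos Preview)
Your proposal is correct and follows the same route as the paper: Denjoy's theorem for minimality, the change-of-variables formula for \eqref{etype2}, and Herman's work for the ergodicity of $\lambda$. The paper is even terser---it simply cites Theorem~1.4 in Chapter~VII of Herman's thesis \cite{He} for the ergodicity statement---whereas you sketch the underlying bounded-distortion/density-point argument that Herman's theorem encapsulates; your outline of that argument is accurate, including the caution that one must work with $\phi$ directly rather than push through the (possibly singular) conjugacy $h$.
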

\begin{proof} $\phi$ is topological conjugate to an irrational rotation by a result of Denjoy, \cite{De}, and hence minimal. That $\lambda $ is $\phi$-ergodic follows from Theorem 1.4 in Chapter VII in \cite{He}. \eqref{etype2} is wellknown and easy to prove.
\end{proof}

The existence of $e^{F}$-conformal measures of type $II_{\infty}$ and type $III$ follows now by combining Lemma \ref{02-10-19} with Katznelson's Theorem \ref{Katz} as follows: For $\phi_{i}$ set $F:= \log D(\phi_{i})$. Then $\lambda$ is $e^{F}$-conformal and ergodic for $\phi_{i}$ by Lemma \ref{02-10-19}. Theorem \ref{15-08-19e} implies that $\lambda$ is of type $II_{\infty}$ for $i=1$ and of type $III$ for $i=2$. By appealing to results we shall prove below, we can offer the following additional information about the examples of Katznelson:
\begin{itemize}
\item For both $i = 1$ and $i=2$ there is a unique $D(\phi_i)^{\beta}$-conformal measure for all $\beta \in \mathbb R$. 
\end{itemize} The statement concerning existence follows from Theorem \ref{18-10-19b} or Theorem \ref{Hurra} while the uniqueness is a consequence of the uniqueness statement in the following theorem which is a generalisation of Th\'eor\`eme 1 in \cite{DY}.

\begin{thm}\label{02-10-19c}
Assume $\phi$ is a homeomorphism of the circle topological conjugate to an irrational rotation $R_{\alpha}$, i.e. there is a homeomorphism $h$ with $\phi=h\circ R_{\alpha} \circ h^{-1}$. There exists $\beta$-KMS states for $\alpha^{F}$ for some $\beta \neq 0$ if and only if 
$$
\int_{\mathbb T}
 F\circ h \ d\lambda =0 \ ,
$$ 
in which case they exist for all $\beta \in \mathbb{R}$. If $F:\mathbb{T}\to \mathbb{R}$ has bounded variation the $\beta$- KMS state is unique for each $\beta \in \mathbb R$.
\end{thm}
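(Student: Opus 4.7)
The plan is to handle existence and uniqueness separately, reducing uniqueness to the pure rotation case by transporting everything through the conjugating map $h$, and then invoking Denjoy--Koksma in the spirit of \cite{DY}.

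For the existence part, I would start by noting that because $\phi = h \circ R_{\alpha} \circ h^{-1}$ with $\alpha$ irrational, $\phi$ is minimal and uniquely ergodic, with unique $\phi$-invariant Borel probability measure $h_{*}\lambda$. Since $\phi$ has no periodic points and $X = \mathbb T$ is infinite, Corollary \ref{01-08-19e} gives an affine bijection between $\beta$-KMS states for $\alpha^{F}$ and $e^{\beta F}$-conformal measures for each $\beta \neq 0$. One then applies Theorem \ref{18-10-19b}: for uniquely ergodic $\phi$ and any $\beta \neq 0$, an $e^{\beta F}$-conformal measure exists if and only if the integral of $F$ against the unique invariant measure vanishes, and in that case such a measure exists for every $\beta \in \mathbb R$. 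That integral equals $\int_{\mathbb T} F \circ h \, d\lambda$ by definition of $h_{*}\lambda$, which yields the claimed equivalence and the extension to all $\beta$.

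For uniqueness, the case $\beta = 0$ is automatic: a $0$-KMS state is a trace, and traces on $C(\mathbb T) \rtimes_{\phi} \mathbb Z$ correspond to $\phi$-invariant probability measures, which are unique by unique ergodicity. So assume $\beta \neq 0$ and that $F$ has bounded variation. I would transport the problem to $R_{\alpha}$: if $m$ is $e^{\beta F}$-conformal for $\phi$, then $\mu := (h^{-1})_{*}m$ is $e^{\beta \tilde F}$-conformal for $R_{\alpha}$, where $\tilde F := F \circ h$. Because a circle homeomorphism is piecewise monotone, $V(\tilde F) = V(F) < \infty$, and by the existence criterion already proved, $\int_{\mathbb T} \tilde F \, d\lambda = 0$. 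Hence it suffices to show that for an irrational rotation $R_{\alpha}$ and a potential $\tilde F$ of bounded variation with zero Lebesgue integral there is at most one $e^{\beta \tilde F}$-conformal probability measure.

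The heart of the argument uses the Denjoy--Koksma inequality: if $p_{n}/q_{n}$ are the continued fraction convergents of $\alpha$, then
\[
\bigl| S_{q_{n}}(\tilde F)(x) - q_{n}\textstyle\int \tilde F\, d\lambda \bigr| \leq V(\tilde F), \qquad x \in \mathbb T.
\]
With $\int \tilde F\, d\lambda = 0$ this forces $\|S_{q_{n}}(\tilde F)\|_{\infty} \leq V(\tilde F)$, so the Radon--Nikodym derivatives $e^{\beta S_{q_{n}}(\tilde F)} = d(\mu \circ R_{\alpha}^{q_{n}})/d\mu$ are squeezed between two positive constants, uniformly in $n$. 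Combined with $R_{\alpha}^{q_{n}} \to \mathrm{id}$ uniformly on $\mathbb T$, this gives a bounded-distortion statement along the convergents. Given two $e^{\beta \tilde F}$-conformal probability measures $\mu_{1}, \mu_{2}$, one may assume by Lemma \ref{31-07-19b} (passing to extreme points) that both are ergodic; Schmidt's theorem (as quoted in the proof of Theorem \ref{15-08-19e}) forces them to be either equal or mutually singular. The DY argument then uses the bounded-distortion property above to rule out mutual singularity by propagating the local comparison $c \mu_{i}(B) \leq \mu_{i}(R_{\alpha}^{q_{n}}B) \leq C\mu_{i}(B)$ across the dense orbit structure of $R_{\alpha}$ and obtaining that $\mu_{1}$ and $\mu_{2}$ are mutually absolutely continuous, whence equal by ergodicity. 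Uniqueness for $\phi, F$ follows by pushing back through $h$.

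The main obstacle is the last step: converting the Denjoy--Koksma uniform bound along the subsequence $\{q_{n}\}$ into a genuine equivalence (or contradiction to mutual singularity) of any two ergodic conformal measures. This is exactly where the proof of Th\'eor\`eme 1 of \cite{DY} does its real work in the case $F = \log D(\phi)$, and the adaptation requires only the abstract BV hypothesis via Denjoy--Koksma rather than any differentiability of $\phi$.
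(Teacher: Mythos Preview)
Your proposal is correct and follows essentially the same route as the paper's proof: existence is obtained from Theorem~\ref{18-10-19b} via the identification of the unique $\phi$-invariant measure with $h_{*}\lambda$, and uniqueness is reduced to the Denjoy--Koksma bounded-distortion argument of Lemme~1 in \cite{DY}. The paper's own proof is in fact just a two-sentence pointer to Theorem~\ref{18-10-19b} and to \cite{DY}, so your write-up supplies exactly the details the paper omits; your choice to transport explicitly to $R_{\alpha}$ before invoking Denjoy--Koksma is a natural way to make the ``obvious adaptation'' precise.
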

\begin{proof}  The statement concerning existence follows from Theorem \ref{18-10-19b} below because $\lambda \circ h^{-1}$ is the only $\phi$-invariant Borel probability measure. The uniqueness follows from an obvious adaptation of the proof of Lemme 1 in \cite{DY}. 
\end{proof}

Theorem \ref{02-10-19c} applies to the examples by Katznelsom because his diffeomorphisms are $C^{\infty}$. In particular, they are conjugate to a rotation by the result of Denjoy, \cite{De}. We have no idea what the factor types of the $D(\phi_i)^{\beta}$-conformal measures are in Katznelsons examples when $\beta \notin \{0,1\}$.


The paper by Douady and Yoccoz contains examples of irrational rotations of the circle for which there is a potential $F : \mathbb T \to \mathbb R$ such that there are two distinct $e^{F}$-conformal measures, both of type $I_{\infty}$. The construction in \cite{DY} can be modified to provide $n$ distinct $e^{F}$-conformal measures, but their construction only works for rotations on the circle for which the rotation number satisfies a certain diophantic equation, and will not work for e.g. the Liouville numbers.  See Section 7.2 in \cite{DY}. It will be shown in Appendix \ref{AppA} below that there can be arbitrarily many ergodic $e^{\beta F}$-conformal measures of type $I_{\infty}$ for any non-periodic homeomorphisms.

\section{Construction and existence of $e^{\beta F}$-conformal measures}


There exists a method to construct conformal measures, developed by Patterson, \cite{P}, Sullivan, \cite{S} and Denker and Urbanski, \cite{DU}, and it is mainly aimed at non-invertible dynamical systems. We shall here develop an alternate method inspired by Hopf's ratio ergodic theorem for homeomorphisms, cf. e.g. \cite{Ho}.

\begin{lemma}\label{12-09-19a} Assume that there is a point $x\in X$ and two increasing sequences of natural numbers, $\{n_{i}\}_{i=1}^{\infty}$ and $\{m_{i}\}_{i=1}^{\infty}$, such that
\begin{equation}\label{12-09-19l}
\lim_{i\to \infty} \frac{e^{\beta S_{-n_{i}}(F)(x)}+e^{\beta S_{m_{i}}(F)(x)}}{\sum_{k=-n_{i}}^{m_{i}} e^{\beta S_{k}(F)(x)}} \ = \ 0 \ .
\end{equation}
Define
\begin{equation*}
L_{i}(f)\ = \ \frac{\sum_{k=-n_{i}}^{m_{i}} f(\phi^{k}(x))e^{\beta S_{k}(F)(x)} }{\sum_{k=-n_{i}}^{m_{i}} e^{\beta S_{k}(F)(x)}} \ .
\end{equation*}
There is an $e^{\beta F}$-conformal measure $m_x$ and an increasing sequence $\{i_{j}\}_{j=1}^{\infty}$ in $\mathbb N$ such that
\begin{equation*}
\lim_{j \to \infty} L_{i_j}(f) \ = \ \int_X f \ \mathrm{d}m_x
\end{equation*}
for all $f \in C(X)$. 
\end{lemma}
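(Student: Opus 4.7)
The plan is to realize each $L_i$ as integration against a Borel probability measure, extract a weak-$*$ convergent subsequence by compactness, and then verify the conformality condition \eqref{eqint} for the limit measure by an elementary telescoping argument in which the hypothesis \eqref{12-09-19l} controls the boundary terms.

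First I would observe that $L_i$ is a positive linear functional on $C(X)$ with $L_i(1) = 1$, since it is a convex combination of point evaluations $\delta_{\phi^k(x)}$ with the non-negative weights $e^{\beta S_k(F)(x)}/Z_i$ where $Z_i := \sum_{k=-n_i}^{m_i} e^{\beta S_k(F)(x)}$. By the Riesz representation theorem each $L_i$ is given by integration against a Borel probability measure $\mu_i$ on $X$, concentrated on the finite orbit segment $\{\phi^{-n_i}(x),\dots,\phi^{m_i}(x)\}$. Since $X$ is a compact metric space, the set of Borel probability measures is weak-$*$ compact and metrizable, so there exists an increasing $\{i_j\}$ and a Borel probability measure $m_x$ with $\mu_{i_j} \to m_x$ weak-$*$.

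Next I would verify that $m_x$ is $e^{\beta F}$-conformal using the identity
\begin{equation*}
S_k(F)(x) + F(\phi^k(x)) \ = \ S_{k+1}(F)(x) \qquad (k \in \mathbb Z),
\end{equation*}
which is immediate from the definition of $S_k(F)$. Fix $f\in C(X)$ and compute
\begin{equation*}
L_i\bigl((f\circ\phi)e^{\beta F}\bigr) \ = \ \frac{1}{Z_i}\sum_{k=-n_i}^{m_i} f(\phi^{k+1}(x))\,e^{\beta S_{k+1}(F)(x)} \ = \ \frac{1}{Z_i}\sum_{k=-n_i+1}^{m_i+1} f(\phi^{k}(x))\,e^{\beta S_k(F)(x)}.
\end{equation*}
Comparing with $L_i(f)$, the sums differ only at the two endpoints, so
\begin{equation*}
\bigl|L_i(f) - L_i((f\circ\phi)e^{\beta F})\bigr| \ \le \ \|f\|_\infty \, \frac{e^{\beta S_{-n_i}(F)(x)} + e^{\beta S_{m_i+1}(F)(x)}}{Z_i}.
\end{equation*}
Writing $e^{\beta S_{m_i+1}(F)(x)} = e^{\beta F(\phi^{m_i}(x))} e^{\beta S_{m_i}(F)(x)}$ and using that $F$ is continuous on the compact space $X$ so $C := \sup_{y\in X} e^{\beta F(y)} < \infty$, we obtain the bound
\begin{equation*}
\bigl|L_i(f) - L_i((f\circ\phi)e^{\beta F})\bigr| \ \le \ \|f\|_\infty (1+C)\, \frac{e^{\beta S_{-n_i}(F)(x)} + e^{\beta S_{m_i}(F)(x)}}{Z_i},
\end{equation*}
which tends to $0$ by the hypothesis \eqref{12-09-19l}. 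Passing to the limit along $\{i_j\}$ yields $\int_X f \, dm_x = \int_X (f\circ\phi)e^{\beta F}\, dm_x$ for every $f\in C(X)$, which is precisely \eqref{eqint}.

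The only mild obstacle is the bookkeeping in the index shift and the need to absorb the extra factor $e^{\beta F(\phi^{m_i}(x))}$ into a uniform constant; continuity of $F$ together with compactness of $X$ makes this routine. Everything else is weak-$*$ compactness and linearity, so no serious difficulty arises.
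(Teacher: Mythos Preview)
Your proof is correct and follows essentially the same approach as the paper's: both extract a subsequential weak-$*$ limit of the probability measures $\mu_i$ (the paper phrases this as a diagonal argument using separability of $C(X)$, which amounts to the same thing), and both verify conformality via the identity $S_k(F)(x)+F(\phi^k(x))=S_{k+1}(F)(x)$, the resulting index shift leaving only the two boundary terms, with the $e^{\beta S_{m_i+1}(F)(x)}$ term controlled by $\|e^{\beta F}\|_\infty\cdot e^{\beta S_{m_i}(F)(x)}$ exactly as you do.
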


\begin{proof}
Since $C(X)$ is separable and $|L_{i}(f)|\leq \lVert f \rVert_{\infty}$, a standard diagonal sequence argument gives an increasing sequence $\{i_{j}\}_{j=1}^{\infty}$ such that $\lim_{j\to \infty} L_{i_{j}}(f)$ exists for all $f\in C(X)$. It follows from the Riesz representation theorem that there is a Borel probability measure $m_{x}$ on $X$ such that
\begin{equation*}
\int_X f \ \mathrm{d} m_x \ = \ \lim_{j \to \infty} L_{i_j}(f) 
\end{equation*}
for all $f \in C(X)$. Since $S_{k}(F)(x)+F(\phi^{k}(x))=S_{k+1}(F)(x)$ for all $k\in \mathbb{Z}$ we get for all $j \in \mathbb{N}$ that
\begin{align*}
&L_{i_{j}}(f)-L_{i_{j}}(e^{\beta F} f\circ \phi ) \\
&  \\
&= \ \frac{\sum_{k=-n_{i_{j}}}^{m_{i_{j}}} f(\phi^{k}(x))e^{\beta S_{k}(F)(x)} }{\sum_{k=-n_{i_{j}}}^{m_{i_{j}}} e^{\beta S_{k}(F)(x)}}
-\frac{\sum_{k=-n_{i_{j}}}^{m_{i_{j}}} f(\phi^{k+1}(x)) e^{\beta F(\phi^{k}(x))}e^{\beta S_{k}(F)(x)} }{\sum_{k=-n_{i_{j}}}^{m_{i_{j}}} e^{\beta S_{k}(F)(x)}} 
\\ 
&\\
&= \ \frac{f(\phi^{-n_{i_{j}}}(x)) e^{\beta S_{-n_{i_{j}}}(F)(x)}  - f(\phi^{m_{i_{j}}+1}(x)) e^{\beta S_{m_{i_{j}}+1}(F)(x)}}{\sum_{k=-n_{i_{j}}}^{m_{i_{j}}} e^{\beta S_{k}(F)(x)}} \\
&=\frac{f(\phi^{-n_{i_{j}}}(x)) e^{\beta S_{-n_{i_{j}}}(F)(x)}  - f(\phi^{m_{i_{j}}+1}(x)) e^{\beta F(\phi^{m_{i_{j}}}(x))}e^{\beta S_{m_{i_{j}}}(F)(x)}}{\sum_{k=-n_{i_{j}}}^{m_{i_{j}}} e^{\beta S_{k}(F)(x)}} \ .
\end{align*}
Since $\lvert f(\phi^{m_{i_{j}}+1}(x))\rvert\leq \lVert f \rVert_{\infty}$ and $ \lvert e^{F(\phi^{m_{i_{j}}}(x))}\rvert \leq \lVert e^{\beta F}\rVert_{\infty}$ for all $j$ it follows from our assumption \eqref{12-09-19l} that
\begin{equation*}
\lvert L_{i_{j}}(f)-L_{i_{j}}(e^{\beta F}f\circ \phi)  \rvert \to 0 \text{ for } j\to \infty \ , 
\end{equation*}
proving that $m_{x}$ is $e^{\beta F}$ conformal.
\end{proof}

\begin{thm}\label{thm31} 
Let $X$ be a compact metric space and let $\phi : X \to X$ be a homeomorphism. Fix a $\beta\in \mathbb{R}$ and a continuous function $F:X\to \mathbb{R}$. The following are equivalent:
\begin{enumerate}
\item There exists an $e^{\beta F}$-conformal measure for $\phi$.
\item There exists a point $x\in X$ such that 
\begin{equation}\label{eq31}
\limsup_k \frac{1}{k} \sum_{i=0}^{k-1} \beta F(\phi^{i}(x)) \ \leq \ 0 
\end{equation}
and 
\begin{equation}\label{eq32}
\limsup_k \frac{1}{k} \sum_{i=1}^{k} - \beta F(\phi^{-i}(x)) \ \leq \ 0 \ .
\end{equation}
\item There exists a point $x\in X$ such that 
\begin{equation}\label{eq33}
\liminf_k \frac{1}{k} \sum_{i=0}^{k-1} \beta F(\phi^{i}(x)) \ \leq \ 0 
\end{equation}
and 
\begin{equation}\label{eq34}
\liminf_k \frac{1}{k} \sum_{i=1}^{k} - \beta F(\phi^{-i}(x)) \ \leq \ 0 \ .
\end{equation}
\end{enumerate}
\end{thm}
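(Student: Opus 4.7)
The strategy is to establish the cycle $(1) \Rightarrow (2) \Rightarrow (3) \Rightarrow (1)$. Since $\liminf \leq \limsup$, the implication $(2) \Rightarrow (3)$ is immediate, and the substance lies in the other two.

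For $(1) \Rightarrow (2)$, I would iterate \eqref{eqint} to derive the identity
\[
\int_X e^{\beta S_k(F)} \ \mathrm{d}m \ = \ 1 \qquad \text{for every } k \in \mathbb Z,
\]
where the case $k<0$ uses that $m$ is also $e^{-\beta F \circ \phi^{-1}}$-conformal for $\phi^{-1}$. Chebyshev's inequality then yields $m\left(\{x : \beta S_k(F)(x) \geq \epsilon k\}\right) \leq e^{-\epsilon k}$ for every $\epsilon > 0$, so $\sum_k e^{-\epsilon k} < \infty$ and the Borel--Cantelli lemma gives $\limsup_k \beta S_k(F)(x)/k \leq \epsilon$ for $m$-almost every $x$. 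Running $\epsilon$ through $1/n$ and intersecting the resulting full-measure sets produces $\limsup_k \beta S_k(F)(x)/k \leq 0$ $m$-almost everywhere. The identical argument applied to the pair $(\phi^{-1}, -F \circ \phi^{-1})$ delivers $\limsup_k \beta S_{-k}(F)(x)/k \leq 0$ $m$-almost everywhere, and any point in the intersection of the two full-measure sets witnesses (2).

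For $(3) \Rightarrow (1)$ the plan is to verify the hypothesis of Lemma \ref{12-09-19a} at the point $x$ furnished by (3). Writing $a_k := e^{\beta S_k(F)(x)}$, $T_m := \sum_{k=0}^m a_k$ and $U_n := \sum_{k=-n}^{0} a_k$, the crucial combinatorial claim is that $\liminf_k \beta S_k(F)(x)/k \leq 0$ forces the existence of $m_i \to \infty$ with $a_{m_i}/T_{m_i} \to 0$. I would prove this by contradiction: if $a_m/T_m \geq c > 0$ for all $m \geq M_0$, then $T_{m-1} \leq (1-c) T_m$ iterates to the exponential lower bound $T_m \geq T_{M_0}(1-c)^{-(m-M_0)}$, and the estimate $a_m \geq cT_m$ yields
\[
\beta S_m(F)(x) \ \geq \ m \log\bigl(1/(1-c)\bigr) + O(1),
\]
whose $\liminf$ is $\log(1/(1-c)) > 0$, contradicting (3). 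The symmetric statement providing $n_i \to \infty$ with $a_{-n_i}/U_{n_i} \to 0$ follows by applying the same reasoning to $(\phi^{-1}, -F \circ \phi^{-1})$, under which backward $F$-sums become forward sums. Combining via
\[
\frac{e^{\beta S_{-n_i}(F)(x)} + e^{\beta S_{m_i}(F)(x)}}{\sum_{k=-n_i}^{m_i} e^{\beta S_k(F)(x)}} \ \leq \ \frac{a_{m_i}}{T_{m_i}} + \frac{a_{-n_i}}{U_{n_i}} \ \longrightarrow \ 0
\]
verifies \eqref{12-09-19l}, and Lemma \ref{12-09-19a} produces the desired $e^{\beta F}$-conformal measure.

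The main obstacle is the borderline case $\liminf \beta S_k(F)(x)/k = 0$ of the combinatorial claim: when the $\liminf$ is strictly negative one has $a_{m_i} \to 0$ geometrically along a subsequence while $T_{m_i} \geq a_0 = 1$, so the ratio vanishes automatically; in the critical case the partial sums can diverge and the exponential-dichotomy contradiction above is what does the work. A minor subsidiary point is the periodic case: if $x$ is $\phi$-periodic then (3) forces the orbit to be $F$-cyclic, and the conformal measure produced by Lemma \ref{12-09-19a} collapses to the normalised counting measure on that orbit.
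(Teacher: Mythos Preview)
Your argument is correct, and in fact streamlines the paper's proof in both nontrivial implications.

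For $(1)\Rightarrow(2)$ the paper first passes to an ergodic conformal measure, observes that the two $\limsup$ functions are $\phi$-invariant and hence constant, and then derives a contradiction from the assumption that the constant is positive by covering $X$ with the level sets $M_n=\{x:\beta S_n(F)(x)\ge nc_1/2\}$ and using $1=\int e^{\beta S_n(F)}\,\mathrm{d}m\ge m(M_n)e^{nc_1/2}$. Your Markov/Borel--Cantelli argument extracts the same decay $m(\beta S_k(F)\ge\epsilon k)\le e^{-\epsilon k}$ directly from the identity $\int e^{\beta S_k(F)}\,\mathrm{d}m=1$ and bypasses the reduction to the ergodic case entirely; it is the cleaner route.

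For $(3)\Rightarrow(1)$ the paper first disposes of the case $\sum_k e^{\beta S_k(F)(x)}<\infty$ via the atomic measure of Lemma~\ref{25-10-19}, and then proves the stronger Claim~1 (existence of $\{m_i\}$ that works against \emph{every} $\{n_i\}$) by a two-case analysis: either $\beta S_m(F)(x)\le0$ infinitely often, or one can choose $m_i$ so that $m\mapsto \beta S_m(F)(x)/m$ attains successive minima there, followed by a delicate geometric-series estimate. Your contradiction---if $a_m/T_m\ge c$ eventually then $T_m$ grows like $(1-c)^{-m}$ and hence $\liminf \beta S_m(F)(x)/m\ge\log\frac{1}{1-c}>0$---is both shorter and self-contained: it needs no divergence hypothesis, and the weaker conclusion $a_{m_i}/T_{m_i}\to0$ already suffices because of your splitting inequality $\frac{a_{-n_i}+a_{m_i}}{\sum_{-n_i}^{m_i}a_k}\le a_{m_i}/T_{m_i}+a_{-n_i}/U_{n_i}$, which the paper does not exploit. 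The paper's formulation of Claim~1 does buy a little extra: it shows that the forward and backward subsequences can be chosen completely independently of one another, a robustness that is not needed for the theorem itself but is conceptually pleasant.
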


\begin{proof}
$1) \Rightarrow 2)$: Assume that there exists an $e^{\beta F}$-conformal measure. By Krein-Milmans Theorem there is also an extremal $e^{\beta F}$-conformal measure $m$, which is then $\phi$-ergodic. Consider the Borel functions $G$ and $H$ on $X$ given by
\begin{equation*}
H(x):=\limsup_{k} \frac{1}{k} \sum_{i=1}^{k} - \beta F \circ \phi^{-i}(x)  
\end{equation*}
and 
\begin{equation*}
G(x):=\limsup_{k} \frac{1}{k} \sum_{i=0}^{k-1}\beta F \circ \phi^{i}(x)  \ ,
\end{equation*}
respectively. They are both $\phi$-invariant and must therefore be constant $m$-almost everywhere since $m$ is $\phi$-ergodic, i.e. there exist $c_{1},c_{2} \in \mathbb{R}$ such that $G(x)=c_{1}$ and $H(x)=c_{2}$ for $m$-almost all $x$. Assume for a contradiction that $c_{1}>0$ and set
\begin{equation*}
M_{n} = \left\{ x \in X: \  \frac{1}{n} \sum_{i=0}^{n-1}\beta F \circ \phi^{i}(x) \geq \frac{c_{1}}{2} \right\} \ .
\end{equation*}
For all $N \in \mathbb N$,
\begin{equation}\label{eqsum}
X = \bigcup_{n \geq N} M_n \ ,
\end{equation}
up to an $m$-null set. Using \eqref{eqint} for $f=1$ gives $1=\int_{X}e^{\beta F}\mathrm{d} m$. In fact, iterated applications of the formula \eqref{eqint} show that
\begin{equation*}
1 = \int_X \exp \left( \sum_{i=0}^{n-1} \beta F \circ \phi^i \right) \ \mathrm{d} m \ \text{ for all } n\geq 1 \ ,
\end{equation*}
and hence
\begin{align*}
&1 \ \geq \ \int_{M_n} \exp \left( \sum_{i=0}^{n-1} \beta F \circ \phi^i \right) \ \mathrm{d} m \\
&  \geq \  \int_{M_n} \exp \left( n \frac{c_{1}}{2} \right)  \ \mathrm{d} m  \ =  \ m(M_n) \exp\left(\frac{nc_{1}}{2}\right) \ .
\end{align*} 
It follows that $m(M_n) \leq \exp\left(-n \frac{c_{1}}{2}\right)$ for all $n$  and in combination with \eqref{eqsum} that
\begin{equation*}
1 = m(X) \leq \sum_{n=N}^{\infty} \exp \left(-n \frac{c_{1}}{2}\right)
\end{equation*}
for all $N$, contradicting that $\lim_{N \to \infty} \sum_{n=N}^{\infty} \exp\left(-n \frac{c_{1}}{2}\right) = 0$. Hence $c_{1}\leq 0$. Since \eqref{eqint} can also be used to prove that:
\begin{equation*}
1= \int_X \exp \left( \sum_{i=1}^{n}- \beta F \circ \phi^{-i} \right) \ \mathrm{d} m \ \text{ for all } n\geq 1 \ ,
\end{equation*}
a similar argument gives that $c_{2}\leq 0$. In conclusion $m$-almost all points satisfy \eqref{eq31} and \eqref{eq32},  completing the proof of $1)\Rightarrow 2)$.

 $2) \Rightarrow 3)$ is obvious and we will argue that $3)$ implies $1)$ to complete the proof. Assume therefore that there exists a point $x\in X$ satisfying \eqref{eq33} and \eqref{eq34}. We can assume that
\begin{equation}\label{11-11-19}
\sum_{k=-\infty}^{\infty} e^{\beta S_{k}(F)(x)} \ = \ \infty  \end{equation}
since otherwise we get the desired $e^{\beta F}$-conformal measure from 3) of Lemma \ref{25-10-19}.  As a crucial step we prove

\emph{Claim 1: There exists an increasing sequence $\{m_{i}\}_{i=1}^{\infty}$ in $\mathbb N$ such that for any increasing sequence $\{n_{i}\}_{i=1}^{\infty}$ in $\mathbb N$ we have
\begin{equation*}
\lim_{i\to \infty} \ \frac{e^{\beta S_{m_{i}}(F)(x)}}{\sum_{k=-n_{i}}^{m_{i}} e^{\beta S_{k}(F)(x)}} \ =\ 0 \ .
\end{equation*}}

To prove Claim 1 assume first that there exists an increasing infinite sequence $\{m_{i}\}_{i=1}^{\infty}$ with the property that
\begin{equation}\label{ee1}
\sum_{j=0}^{m_{i}-1} \beta F ( \phi^{j}(x)) \leq 0 \text{ for all } i\in \mathbb{N} \ .
\end{equation}
Then for any increasing sequence $\{n_{i}\}_{i=1}^{\infty}$ we get
\begin{equation*}
0 \ \leq \ \frac{e^{\beta S_{m_{i}}(F)(x)}}{\sum_{k=-n_{i}}^{m_{i}} e^{\beta S_{k}(F)(x)}} \ \leq \ \frac{1}{\sum_{k=-n_{i}}^{m_{i}} e^{\beta S_{k}(F)(x)}} \ .
\end{equation*}
It follows from \eqref{11-11-19} that the righthand side converges to $0$, and we can therefore assume that there is no infinite sequence satisfying \eqref{ee1}. This implies that we can choose a $m_{1}\in \mathbb{N}$ so that 
\begin{equation*}
\sum_{j=0}^{l-1} \beta F ( \phi^{j}(x)) > 0 
\end{equation*}
for all $l\geq m_{1}$. Combined with \eqref{eq33} this implies that
\begin{equation*}
\inf_{l\geq m_{1}} \frac{1}{l} \sum_{i=0}^{l-1} \beta F ( \phi^{i}(x)) =0 \ ,
\end{equation*}
and hence we can choose an increasing sequence $\{m_{i}\}_{i=1}^{\infty}$ such that for every $i\in \mathbb{N}$ and every $l\in [m_{1}, m_{i}[$ we have
\begin{equation} \label{e123}
\frac{1}{l} \beta S_{l}(F)(x) =
\frac{1}{l} \sum_{j=0}^{l-1} \beta F ( \phi^{j}(x)) >
\frac{1}{m_{i}} \sum_{j=0}^{m_{i}-1} \beta F ( \phi^{j}(x))=\frac{1}{m_{i}} \beta S_{m_{i}}(F)(x) \ .
\end{equation}
Define the decreasing sequence $\{c_{i}\}_{i=1}^{\infty}$ of positive reals by setting 
$$
c_{i}:=\exp\Big(\frac{1}{m_{i}} \beta S_{m_{i}}(F)(x)\Big)
$$ 
for each $i$. It follows from the choice of $\{m_{i}\}_{i=1}^{\infty}$ that $\lim_{i\to \infty} c_{i}=1$. Take an arbitrary increasing sequence $\{n_{i}\}_{i=1}^{\infty}$. If $\{c_{i}^{m_{i}}\}_{i=1}^{\infty}$ does not converge to infinity then it is bounded for some subsequence $i_{j}$ and 
\begin{equation*}
\frac{e^{\beta S_{m_{i_{j}}}(F)(x)}}{\sum_{k=-n_{j}}^{m_{i_{j}}} e^{\beta S_{k}(F)(x)}}
=\frac{c_{i_{j}}^{m_{i_{j}}}}{\sum_{k=-n_{j}}^{m_{i_{j}}} e^{\beta S_{k}(F)(x)}}
\end{equation*}
will converge to zero for $j\to \infty$ by \eqref{11-11-19}, and we have established the claim. Assume therefore that $c_{i}^{m_{i}}\to \infty$ for $i\to \infty$ and let $\{n_{i}\}_{i=1}^{\infty}$ be any increasing sequence in $\mathbb N$. Using \eqref{e123} we get that
\begin{align*}
& 0 \ \leq \ \frac{e^{\beta S_{m_{i}}(F)(x)}}{\sum_{k=-n_{i}}^{m_{i}} e^{\beta S_{k}(F)(x)}} \
\leq \ \frac{c_{i}^{m_{i}}}{\sum_{k=m_{1}}^{m_{i}} e^{\beta S_{k}(F)(x)}} \\
&\leq \ \frac{c_{i}^{m_{i}}}{\sum_{k=m_{1}}^{m_{i}} c_{i}^{k}} \ 
= \ \left(\sum_{k=0}^{m_{i} -m_{1}} (\frac{1}{c_{i}})^{k}\right)^{-1} \ .
\end{align*}
Since $\lim_{i\to \infty}c_{i}=1$ and $\lim_{i\to \infty}c_{i}^{-m_{i}}=0$ we deduce from the formula
\begin{equation*}
1=\bigg(1-\frac{1}{c_{i}}\bigg)\sum_{k=0}^{m_{i} -m_{1}} \bigg(\frac{1}{c_{i}}\bigg)^{k} + \bigg(\frac{1}{c_{i}}\bigg)^{m_{i}} c_{i}^{m_{1}-1} \ ,
\end{equation*}
that $\sum_{k=0}^{m_{i} -m_{1}} (\frac{1}{c_{i}})^{k}\to \infty$ for $i\to \infty$. This finishes the proof of Claim 1.

\smallskip

The condition in \eqref{eq34} is the same as the one in \eqref{eq33} if one replaces $F$ by $-F\circ \phi^{-1}$ and $\phi$ by $\phi^{-1}$ and the sum \eqref{11-11-19} is unchanged by this replacement. In this way Claim $1$ implies 

\smallskip

\emph{Claim 2: There exists an increasing sequence $\{n_{i}\}_{i=1}^{\infty}$ in $\mathbb N$ such that for any increasing sequence $\{m_{i}\}_{i=1}^{\infty}$ in $\mathbb N$ we have
\begin{equation*}
\frac{e^{\beta S_{-n_{i}}(F)(x)}}{\sum_{k=-n_{i}}^{m_{i}} e^{\beta S_{k}(F)(x)}} \to 0 \text{ for } i \to \infty \  .
\end{equation*}}

\smallskip 
It follows from the two claims that there are increasing sequences $\{n_{i}\}_{i=1}^{\infty}$ and $\{m_{i}\}_{i=1}^{\infty}$ in $\mathbb N$ such that
\begin{equation*}
\frac{e^{\beta S_{-n_{i}}(F)(x)}+e^{\beta S_{m_{i}}(F)(x)}}{\sum_{k=-n_{i}}^{m_{i}} e^{\beta S_{k}(F)(x)}} \to 0 \text{ for } i \to \infty \ ,
\end{equation*}
and the existence of an $e^{\beta F}$-conformal measure follows from Lemma \ref{12-09-19a}.  Hence $3) \Rightarrow 1)$.

\end{proof}

\begin{remark}\label{Schmidt}
 Klaus Schmidt mention in \cite{Sc} that it is an open question to decide when there are $e^g$-conformal measures for a given automorphism of a standard Borel space and a given real-valued Borel function $g$. As far as we can tell this problem is still open, but Theorem \ref{thm31} offers an answer when the dynamical system is a homeomorphism and $g$ is continuous.
\end{remark}

\begin{cor}\label{cor34} Let $m$ be an ergodic $e^{\beta F}$-conformal measure. Then \eqref{eq31} and \eqref{eq32} hold for $m$-almost all $x \in X$. 
\end{cor}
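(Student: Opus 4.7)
The plan is to recycle the argument from the implication $1) \Rightarrow 2)$ in the proof of Theorem \ref{thm31}, where essentially the same conclusion was drawn for the ergodic conformal measure extracted via Krein-Milman. Since here $m$ is already assumed to be ergodic, no such extraction is needed and the same reasoning applies directly.

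Concretely, I would define the Borel functions
\begin{equation*}
G(x) := \limsup_{k} \frac{1}{k} \sum_{i=0}^{k-1}\beta F \circ \phi^{i}(x), \qquad H(x) := \limsup_{k} \frac{1}{k} \sum_{i=1}^{k} -\beta F \circ \phi^{-i}(x),
\end{equation*}
and first observe that both are $\phi$-invariant (the averages of $F\circ\phi^i$ along forward and backward orbits change only by a vanishing boundary term when $x$ is replaced by $\phi(x)$). By the assumed $\phi$-ergodicity of $m$, they are $m$-almost everywhere equal to constants $c_1$ and $c_2$ respectively.

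The heart of the matter is to show $c_1 \leq 0$ and $c_2 \leq 0$. For $c_1$, assume for contradiction that $c_1 > 0$ and set $M_n = \{x : \frac{1}{n}\sum_{i=0}^{n-1}\beta F\circ\phi^i(x) \geq c_1/2\}$. Then, up to $m$-null sets, $X = \bigcup_{n\geq N} M_n$ for every $N$. Iterating the conformality identity \eqref{eqint} gives
\begin{equation*}
1 = \int_X \exp\!\left(\sum_{i=0}^{n-1}\beta F\circ\phi^i\right)\, \mathrm{d}m \;\geq\; m(M_n)\exp(n c_1/2),
\end{equation*}
so $m(M_n) \leq e^{-nc_1/2}$, yielding $1 \leq \sum_{n\geq N} e^{-nc_1/2}$ for all $N$, a contradiction. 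The argument for $c_2 \leq 0$ is the same, using the backward-orbit version
\begin{equation*}
1 = \int_X \exp\!\left(\sum_{i=1}^{n} -\beta F\circ\phi^{-i}\right)\, \mathrm{d}m
\end{equation*}
which follows by iterated application of \eqref{eqint} (or equivalently by noting that $m$ is $e^{-\beta F\circ\phi^{-1}}$-conformal for $\phi^{-1}$).

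Combining these, $G(x) \leq 0$ and $H(x) \leq 0$ for $m$-almost all $x$, which is exactly \eqref{eq31} and \eqref{eq32} almost surely. There is no real obstacle here, since the argument is already contained in Theorem \ref{thm31}; the only small point to note is that ergodicity is used to replace the ``extract an extreme point'' step with a direct application to $m$ itself.
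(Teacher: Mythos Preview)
Your proposal is correct and is exactly the approach the paper takes: its proof of Corollary~\ref{cor34} simply states that this was established in the proof of Theorem~\ref{thm31}, namely the $1)\Rightarrow 2)$ argument you have reproduced, applied directly to the given ergodic measure $m$.
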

\begin{proof} This was established in the proof of Theorem \ref{thm31}. 
\end{proof}

\begin{cor}\label{cor36}  Let $I$ be the \emph{KMS spectrum} for $\alpha^{F}$, i.e. the set of real numbers $\beta$ such that there exists an $e^{\beta F}$-conformal measure for $\phi$. Then $I$ is one of following intervals:
\begin{itemize}
\item $I = \{0\}$,
\item $I = ]-\infty, \  0]$,
\item $I = [0,\infty[$, or
\item $I = \mathbb R$.
\end{itemize}
\end{cor}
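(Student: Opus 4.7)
The plan is to use Theorem~\ref{thm31} to show that the positive and negative parts of $I$ are each either empty (apart from $0$) or a full half-line, and to note separately that $0 \in I$. Since $X$ is compact and $\phi$ is continuous, the Krylov--Bogolyubov theorem yields a $\phi$-invariant Borel probability measure $\nu$, and any such $\nu$ is trivially $e^{0 \cdot F}$-conformal, so $0 \in I$.

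For the positive side, suppose some $\beta_0 > 0$ lies in $I$. By the implication $1) \Rightarrow 2)$ in Theorem~\ref{thm31}, there is a point $x \in X$ such that
\begin{equation*}
\limsup_k \frac{1}{k}\sum_{i=0}^{k-1} \beta_0 F(\phi^i(x)) \ \leq \ 0 \quad \text{and} \quad \limsup_k \frac{1}{k}\sum_{i=1}^{k} -\beta_0 F(\phi^{-i}(x)) \ \leq \ 0.
\end{equation*}
Since $\beta_0 > 0$, the first condition is equivalent to $\limsup_k \tfrac{1}{k}\sum_{i=0}^{k-1} F(\phi^i(x)) \leq 0$, and the second is equivalent to $\liminf_k \tfrac{1}{k}\sum_{i=1}^{k} F(\phi^{-i}(x)) \geq 0$. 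Both conditions on $x$ are now $\beta_0$-free. For any other $\beta' > 0$, multiplying by $\beta'$ (respectively $-\beta'$) preserves these inequalities, so the same $x$ satisfies \eqref{eq31} and \eqref{eq32} with $\beta'$ in place of $\beta_0$. By $2) \Rightarrow 1)$ of Theorem~\ref{thm31}, $\beta' \in I$. Hence $I \cap (0,\infty)$ is either empty or all of $(0,\infty)$.

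The negative side is symmetric: if some $\beta_0 < 0$ lies in $I$, then choosing $x$ from Theorem~\ref{thm31}(2) and dividing by $\beta_0$ (which reverses inequalities) yields the $\beta_0$-free conditions $\liminf_k \tfrac{1}{k}\sum_{i=0}^{k-1} F(\phi^i(x)) \geq 0$ and $\limsup_k \tfrac{1}{k}\sum_{i=1}^k F(\phi^{-i}(x)) \leq 0$; for any $\beta' < 0$, multiplying by $\beta'$ (respectively $-\beta'$) restores \eqref{eq31} and \eqref{eq32}, so $\beta' \in I$. Combining the two alternatives with $0 \in I$ gives exactly the four listed possibilities for $I$. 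There is no serious obstacle here once Theorem~\ref{thm31} is in hand; the only subtlety to check carefully is the sign bookkeeping when passing through $\beta$, which is why the characterization was formulated in $\beta$-free terms above.
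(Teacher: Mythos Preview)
Your proof is correct and follows essentially the same route as the paper's: both argue via Theorem~\ref{thm31} that conditions \eqref{eq31} and \eqref{eq32} depend on $\beta$ only through its sign, so membership of one nonzero $\beta_0$ in $I$ forces the entire half-line of the same sign into $I$. The only cosmetic difference is that you invoke Krylov--Bogolyubov for $0\in I$, whereas the paper simply notes that \eqref{eq31} and \eqref{eq32} hold trivially at $\beta=0$ and then appeals to $2)\Rightarrow 1)$ of Theorem~\ref{thm31}.
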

\begin{proof} This follows from the observation that the two conditions \eqref{eq31} and \eqref{eq32} hold for $\beta = 0$, and if they hold for some non-zero $\beta_{0}$ they hold for all real numbers $\beta$ with the same sign as $\beta_{0}$.
\end{proof}

\begin{cor}\label{cor37} Assume that there is an ergodic $\phi$-invariant probability measure $\nu$ such that $\int_X F \ \mathrm{d}\nu = 0$. It follows that there is an $e^{\beta F}$-conformal measure for all $\beta \in \mathbb R$.
\end{cor}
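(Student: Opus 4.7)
My plan is to derive the corollary directly from Theorem \ref{thm31} by producing a point $x \in X$ satisfying condition (2) (which in fact we will satisfy with equality, so it trivially holds for every $\beta$ at once).

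The key input is Birkhoff's ergodic theorem applied to $\nu$. Since $\nu$ is $\phi$-invariant, ergodic, and $F \in L^1(\nu)$ with $\int_X F\, \mathrm{d}\nu = 0$, Birkhoff gives us that
\begin{equation*}
\lim_{k \to \infty} \frac{1}{k} \sum_{i=0}^{k-1} F(\phi^{i}(x)) \ = \ 0
\end{equation*}
for $\nu$-almost every $x \in X$. Because $\nu$ is also $\phi^{-1}$-invariant and ergodic (the $\phi^{-1}$-invariant Borel sets coincide with the $\phi$-invariant ones), applying Birkhoff to the transformation $\phi^{-1}$ with the same integrand yields
\begin{equation*}
\lim_{k \to \infty} \frac{1}{k} \sum_{i=1}^{k} F(\phi^{-i}(x)) \ = \ 0
\end{equation*}
for $\nu$-almost every $x \in X$. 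The intersection of these two sets of full measure is nonempty (in fact of full $\nu$-measure), so we may fix one such $x$.

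For this $x$ and for any $\beta \in \mathbb{R}$, multiplying the first limit by $\beta$ and the second by $-\beta$ gives
\begin{equation*}
\lim_{k \to \infty} \frac{1}{k} \sum_{i=0}^{k-1} \beta F(\phi^{i}(x)) \ = \ 0 \quad \text{and} \quad \lim_{k \to \infty} \frac{1}{k} \sum_{i=1}^{k} -\beta F(\phi^{-i}(x)) \ = \ 0 \ .
\end{equation*}
In particular the $\limsup$'s in \eqref{eq31} and \eqref{eq32} are equal to $0$, so condition (2) of Theorem \ref{thm31} is satisfied. By the theorem there is an $e^{\beta F}$-conformal measure, and since $\beta$ was arbitrary the claim follows. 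There is no real obstacle here; the only minor point worth noting is the need to invoke Birkhoff for both $\phi$ and $\phi^{-1}$ (or, equivalently, once for $\phi$ applied to both $F$ and $F \circ \phi^{-1}$) and to observe that the same point $x$ can be chosen to work for both one-sided averages simultaneously.
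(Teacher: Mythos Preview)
Your proof is correct and follows essentially the same approach as the paper: apply Birkhoff's ergodic theorem to both $\phi$ and $\phi^{-1}$ to find a point where both one-sided averages vanish, and then invoke Theorem \ref{thm31}. The paper's version is slightly terser but the argument is identical.
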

\begin{proof} 
Since $\nu$ is a $\phi$-ergodic and $\phi$-invariant measure, it is also $\phi^{-1}$-ergodic and $\phi^{-1}$-invariant. By Birkhoff's pointwise ergodic theorem,
\begin{equation*}
\lim_{n\to \infty} \frac{1}{n} \sum_{i=0}^{n-1} \beta F \circ \phi^i(x) = \int_X \beta F \ \mathrm{d}\nu = 0 
\end{equation*}
and 
\begin{equation*}
\lim_{n\to \infty} \frac{1}{n} \sum_{i=1}^{n} - \beta F \circ \phi^{-i}(x)=\int_X -\beta F \ \mathrm{d}\nu = 0 \ 
\end{equation*}
 for $\nu$-almost all $x \in X$.
Hence Theorem \ref{thm31} applies.
\end{proof}

 In general the condition in Corollary \ref{cor37}, that $\int_X F \ \mathrm{d}\nu = 0$ for an ergodic $\phi$-invariant measure $\nu$, is only a sufficient condition for the existence of an $e^{\beta F}$-conformal measure for all $\beta \in \mathbb R$, but as the next proposition shows it is actually not that far away from also being necessary.

\begin{prop}\label{18-10-19a} Assume that there is an $e^{\beta F}$-conformal measure for some $\beta \neq 0$. It follows that there are ergodic $\phi$-invariant Borel probability measures $\nu_+$ and $\nu_-$ such that $\int_X F  \ \mathrm{d}\nu_+ \geq 0$ and $\int_X F \ \mathrm{d}\nu_- \leq 0$. In particular, a convex combination of $\nu_+$ and $\nu_-$ is a $\phi$-invariant Borel probability measure $\nu$ such that $\int_X F  \ \mathrm{d}\nu =0$.
\end{prop}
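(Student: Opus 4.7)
The plan is to harvest the information encoded in Corollary \ref{cor34}. An extremal $e^{\beta F}$-conformal measure $m$ exists by Krein--Milman, and Corollary \ref{cor34} tells us that \eqref{eq31} and \eqref{eq32} hold $m$-almost everywhere. Fix such a point $x \in X$ and assume for the moment that $\beta > 0$ (the case $\beta < 0$ is symmetric after interchanging the roles of $\nu_+$ and $\nu_-$). Then $x$ satisfies
\[
\limsup_{k\to\infty} \tfrac{1}{k}\sum_{i=0}^{k-1} F(\phi^i(x)) \leq 0, \qquad \liminf_{k\to\infty} \tfrac{1}{k}\sum_{i=1}^{k} F(\phi^{-i}(x)) \geq 0.
\]

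First I would produce $\nu_-$. Using the $\limsup$ condition, extract a subsequence $k_j \to \infty$ along which the forward Birkhoff averages converge to a value $c_- \leq 0$. By weak$^*$ compactness I can pass to a further subsequence so that the empirical measures $\tfrac{1}{k_j}\sum_{i=0}^{k_j-1}\delta_{\phi^i(x)}$ converge weak$^*$ to some Borel probability measure $\nu_-'$. The usual telescoping estimate (where the error is bounded by $2\lVert f\rVert_\infty/k_j$) shows $\nu_-'$ is $\phi$-invariant, and weak$^*$ convergence applied to the continuous function $F$ gives $\int_X F\, \mathrm{d}\nu_-' = c_- \leq 0$. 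An ergodic decomposition of $\nu_-'$ must then contain an ergodic component $\nu_-$ with $\int_X F\, \mathrm{d}\nu_- \leq 0$.

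The construction of $\nu_+$ is completely parallel, but using the backward averages and the $\liminf$ condition. Choose a subsequence $n_j \to \infty$ along which $\tfrac{1}{n_j}\sum_{i=1}^{n_j} F(\phi^{-i}(x)) \to c_+ \geq 0$ and the measures $\tfrac{1}{n_j}\sum_{i=1}^{n_j}\delta_{\phi^{-i}(x)}$ converge weak$^*$ to a probability measure $\nu_+'$. The same telescoping argument (applied to $\phi^{-1}$) shows $\nu_+'$ is $\phi^{-1}$-invariant and hence $\phi$-invariant, with $\int_X F\, \mathrm{d}\nu_+' = c_+ \geq 0$. Ergodic decomposition yields an ergodic component $\nu_+$ with $\int_X F\, \mathrm{d}\nu_+ \geq 0$.

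For the final assertion, set $a = \int_X F\, \mathrm{d}\nu_+ \geq 0$ and $b = \int_X F\, \mathrm{d}\nu_- \leq 0$. If $a = b = 0$ take $\nu = \nu_+$; otherwise $a - b > 0$ and $t := -b/(a-b) \in [0,1]$, whereupon $\nu := t\nu_+ + (1-t)\nu_-$ is a $\phi$-invariant Borel probability measure with $\int_X F\, \mathrm{d}\nu = 0$. There is no serious obstacle in this plan; the only point requiring mild care is the case analysis for the sign of $\beta$ and the check that passing to ergodic components preserves the sign of $\int F\, \mathrm{d}\nu'$, which is immediate because the ergodic decomposition integrates $\int F$ over its components.
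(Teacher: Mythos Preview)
Your proof is correct and follows essentially the same route as the paper: both start from the point $x$ furnished by Corollary~\ref{cor34} (equivalently, the proof of Theorem~\ref{thm31}), take weak$^*$ condensation points of the forward and backward empirical measures to obtain $\phi$-invariant probability measures with $\int F$ of the desired sign, and then pass to ergodic measures. The only difference is in this last step: you invoke the ergodic decomposition of $\nu_{\pm}'$ and pick a component with the right sign of $\int F$, whereas the paper instead observes that $\delta_+ := \sup\{\int F\,\mathrm{d}\nu : \nu \in M(X)\} \geq 0$, that the set of maximisers is a nonempty closed face of the Choquet simplex $M(X)$, and that any extreme point of this face is an extreme point of $M(X)$ and hence ergodic. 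Both extractions are standard; the paper's face argument is marginally more self-contained (it avoids citing the ergodic decomposition theorem), while yours is perhaps the more reflexive move.
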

\begin{proof} This follows from the more general Theorem 6.8 in \cite{Th}, but we give here a direct proof based on Theorem \ref{thm31}. Let $M(X)$ denote the set of $\phi$-invariant Borel probability measures on $X$. Let $x  \in X$ be a point for which \eqref{eq31} and \eqref{eq32} both hold. Let $\nu_1$ be a condensation point for the weak* topology of the sequence 
$$
\frac{1}{n} \sum_{i=0}^{n-1} \delta_{\phi^i(x)} \ , \ n = 1,2,3,4, \cdots \ .
$$
Then $\nu_1 \in M(X)$ and it follows from \eqref{eq31} that $\beta \int_X F \ \mathrm{d}\nu_1 \leq 0$. In the same way \eqref{eq32} gives rise to a $\nu_2 \in M(X)$ such that $-\beta \int_X F \ \mathrm{d}\nu_2 \leq 0$. Set
$$
\delta_+ = \sup \left\{ \int_X F \ \mathrm{d} \nu : \ \nu \in M(X) \right\} \ .
$$
It follows from the preceding that $\delta_+ \geq 0$. The set
$$
\left\{\nu \in M(X): \ \int_X F \ \mathrm{d} \nu \ = \ \delta_+ \right\}
$$
is a closed non-empty face in $M(X)$ and it contains therefore an extreme point $\nu_+$ of $M(X)$. Then $\nu_+$ is $\phi$-ergodic and $\int_X F \   \mathrm{d}\nu_+ \geq 0$. In the same way we get also a $\phi$-ergodic $\nu_- \in  M(X)$ such that $\int_X F \ \mathrm{d} \nu_- \leq 0$.

\end{proof}

\begin{thm}\label{18-10-19b} Assume that $(X,\phi)$ is uniquely ergodic and let $\mu$ be the $\phi$-invariant Borel probability measure. The following conditions are equivalent:
\begin{itemize}
\item[i)] There is an  $e^{\beta F}$-conformal measure for some $ \beta \neq 0$. 
\item[ii)] There is an  $e^{\beta F}$-conformal measure for all $ \beta \in \mathbb R$. 
\item[iii)] $\int_X F \ \mathrm{d}\mu \ = \ 0 $.
\end{itemize}
\end{thm}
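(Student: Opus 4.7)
The proof should be essentially a direct combination of earlier results, since the heavy lifting has already been done in Proposition \ref{18-10-19a} and Corollary \ref{cor37}. My plan is to verify the three implications in the order (iii)$\Rightarrow$(ii)$\Rightarrow$(i)$\Rightarrow$(iii).

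For (iii)$\Rightarrow$(ii), I would invoke Corollary \ref{cor37}: since $(X,\phi)$ is uniquely ergodic, the unique $\phi$-invariant Borel probability measure $\mu$ is automatically ergodic (if there were a non-trivial $\phi$-invariant Borel set, conditioning $\mu$ on it and on its complement would produce two distinct $\phi$-invariant probability measures), so the hypothesis $\int_X F\,\mathrm{d}\mu = 0$ is precisely what Corollary \ref{cor37} requires to produce $e^{\beta F}$-conformal measures for every $\beta \in \mathbb R$.

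The implication (ii)$\Rightarrow$(i) is of course immediate by specialising to any $\beta \neq 0$.

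For (i)$\Rightarrow$(iii), the key input is Proposition \ref{18-10-19a}. Applied to the assumed $e^{\beta F}$-conformal measure at some $\beta \neq 0$, it yields ergodic $\phi$-invariant Borel probability measures $\nu_+$ and $\nu_-$ with $\int_X F\,\mathrm{d}\nu_+ \geq 0$ and $\int_X F\,\mathrm{d}\nu_- \leq 0$. But unique ergodicity forces $\nu_+ = \nu_- = \mu$, so that both inequalities apply to $\int_X F\,\mathrm{d}\mu$, giving (iii). There is no real obstacle here; the content of the theorem has already been absorbed into the preceding results, and unique ergodicity is precisely the hypothesis needed to collapse the two potentially different measures $\nu_+$ and $\nu_-$ from Proposition \ref{18-10-19a} into a single measure whose $F$-integral must then vanish.
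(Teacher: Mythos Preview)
Your proof is correct and follows exactly the same approach as the paper: the paper's proof is the terse two-line statement that i) $\Rightarrow$ iii) follows from Proposition \ref{18-10-19a} and iii) $\Rightarrow$ ii) from Corollary \ref{cor37}, and you have simply unpacked these references (including the observation that the unique invariant measure is ergodic and that $\nu_+ = \nu_- = \mu$).
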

\begin{proof} i) $\Rightarrow$ iii) follows from Proposition \ref{18-10-19a} and iii) $\Rightarrow$ ii) from Corollary \ref{cor37}.
\end{proof}

\begin{remark}\label{DYanswer}
Following equation $(7.1)$ in \cite{DY}, Douady and Yoccoz raise the question about existence and uniqueness of $e^{\beta \psi}$-conformal measures for an irrational rotation $R_{\alpha}$ and for a potential $\psi$ with $\int \psi \ \mathrm{d}\lambda=0$. Theorem \ref{18-10-19b} answers the existence question since it implies that there are $e^{\beta \psi}$-conformal measures for all $\beta$ in this setup. It follows also, from Theorem \ref{02-10-19c}, that the measure is unique for each $\beta$ when $\psi$ has bounded variation. Douady and Yoccoz construct examples to show that uniqueness fails for certain irrational rotations and certain $\psi$. By Theorem \ref{thm73} there are counterexamples to uniqueness for all irrational rotations.

\end{remark}

\begin{remark}\label{24-10-19} It follows from Theorem \ref{18-10-19b} and Theorem \ref{thm41} that when $(X,\phi)$ is uniquely ergodic the flow $\alpha^F$ must be approximately inner if there is a $\beta$-KMS state for $\beta \neq 0$. This is not the case if $(X,\phi)$ is not uniquely ergodic. To show this by example, consider a homeomorphism $\phi$ with exactly two ergodic $\phi$-invariant Borel probability measures $\omega_1$ and $\omega_2$. There is then a potential $F$ such that $\int_X F \ \mathrm{d} \omega_1  = 0$ and  $\int_X F \ \mathrm{d} \omega_2  \neq  0$. By Corollary \ref{cor37} there are $e^{\beta F}$-conformal measures for all $\beta \in \mathbb R$ and hence also $\beta$-KMS states for $\alpha^F$ for all $\beta \in \mathbb R$, but $\alpha^F$ is not approximately inner 
by Theorem \ref{thm41}.
\end{remark}

For any of the intervals $I$ in Corollary \ref{cor36} there exist examples of systems $(X, \phi)$ and potentials $F:X\to \mathbb{R}$ with KMS spectrum $I$. If $F=1$ then $I=\{0\}$ by Corollary \ref{cor35}, and if $F=0$ then $I=\mathbb{R}$ by Corollary \ref{cor37}. To realize the case $I = [0,\infty)$ in Corollary \ref{cor36} in a very simple example, let $X =[0,1]$ and $\phi(x) = x^2$. Take $F$ to be any real valued continuous function on $[0,1]$ which is negative in a neighborhood of $0$ and positive in a neighborhood of $1$. The two conditions \eqref{eq31} and \eqref{eq32} are met iff $\beta \geq 0$, i.e. $I =[0,\infty)$ in this case. The case $I =]-\infty,0]$ arises by exchanging the last $F$ by $-F$. However, if one restricts attention to minimal homeomorphisms the KMS spectrum is either $\{0\}$ or $\mathbb{R}$, as shown in the next theorem.

\begin{thm}\label{Hurra}
Let $X$ be a compact metric space and let $\phi:X\to X$ be a minimal homeomorphism. Assume that $F:X\to \mathbb{R}$ is continuous. The KMS spectrum of $\alpha^{F}$ is $I=\{0\}$ or $I=\mathbb{R}$.
\end{thm}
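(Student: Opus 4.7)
The plan is to use Corollary \ref{cor36} to reduce the possibilities to four intervals and then rule out the two half-line cases by a Baire-category construction that leverages minimality. Corollary \ref{cor36} gives $I \in \{\{0\},[0,\infty),(-\infty,0],\mathbb R\}$, and replacing $F$ by $-F$ sends $I$ to $-I$, so it suffices to exclude $I = [0,\infty)$. Suppose $I = [0,\infty)$. Since $1 \in I$, Proposition \ref{18-10-19a} produces ergodic $\phi$-invariant Borel probability measures $\nu_\pm$ with $c_\pm := \int_X F\,\mathrm{d}\nu_\pm$ of opposite signs. If either integral vanishes, Corollary \ref{cor37} forces $I = \mathbb R$, a contradiction, so $c_+ > 0 > c_-$ strictly. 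The strategy is to produce a point $y \in X$ verifying conditions \eqref{eq33} and \eqref{eq34} of Theorem \ref{thm31} at $\beta = -1$, which will give $-1 \in I$, the desired contradiction.

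Fix a metric $d$ compatible with the weak-$*$ topology on Borel probability measures on $X$. For an ergodic invariant probability measure $\nu$, let $A_\nu$ denote the set of $y \in X$ for which $\nu$ is a weak-$*$ accumulation point of the forward empirical measures $\nu_N^y := \frac{1}{N}\sum_{i=0}^{N-1}\delta_{\phi^i(y)}$. Writing $A_\nu = \bigcap_{m,n}\bigcup_{N \geq n}\{y : d(\nu_N^y,\nu) < 1/m\}$ exhibits $A_\nu$ as a $G_\delta$ set; Birkhoff's pointwise ergodic theorem supplies $\nu$-generic points, so $A_\nu$ is non-empty; and since $\nu_N^{\phi(y)}$ differs from $\nu_N^y$ by a signed measure of total variation at most $2/N$, $A_\nu$ is $\phi$-invariant. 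The closure of $A_\nu$ is then a closed, non-empty, $\phi$-invariant subset of $X$, hence equals $X$ by minimality, so $A_\nu$ is a dense $G_\delta$. Applying the same argument to $\phi^{-1}$, which is minimal with the same invariant measures, produces a dense $G_\delta$ set $\tilde A_\nu$ that detects $\nu$ as an accumulation point of the backward empirical measures $\tilde\nu_N^y := \frac{1}{N}\sum_{i=1}^N \delta_{\phi^{-i}(y)}$.

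By Baire's theorem, $A_{\nu_+} \cap \tilde A_{\nu_-}$ is non-empty; pick $y$ in it. Along a suitable subsequence $N_k \to \infty$, $\nu_{N_k}^y \to \nu_+$ weakly, giving $\limsup_k \frac{1}{k}\sum_{i=0}^{k-1} F(\phi^i(y)) \geq c_+ > 0$; along a subsequence $M_k \to \infty$, $\tilde\nu_{M_k}^y \to \nu_-$ weakly, giving $\liminf_k \frac{1}{k}\sum_{i=1}^k F(\phi^{-i}(y)) \leq c_- < 0$. Substituting $\beta = -1$, condition \eqref{eq33} becomes $\limsup_k \frac{1}{k}\sum_{i=0}^{k-1} F(\phi^i(y)) \geq 0$ and condition \eqref{eq34} becomes $\liminf_k \frac{1}{k}\sum_{i=1}^k F(\phi^{-i}(y)) \leq 0$, both of which hold strictly at $y$. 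Theorem \ref{thm31} therefore furnishes an $e^{-F}$-conformal measure for $\phi$, so $-1 \in I$, contradicting $I = [0,\infty)$.

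The main obstacle is ensuring that $A_{\nu_+} \cap \tilde A_{\nu_-}$ is non-empty; this is precisely where minimality is used, since it upgrades the non-empty, $\phi$-invariant $G_\delta$ sets $A_\nu$ and $\tilde A_\nu$ to dense $G_\delta$ sets, opening the door for Baire's theorem to provide a common point. The rest of the argument is a routine translation between weak-$*$ convergence of empirical measures and the limsup/liminf conditions appearing in Theorem \ref{thm31}.
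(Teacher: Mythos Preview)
Your proof is correct and takes a genuinely different route from the paper's. The paper argues directly: starting from a point $x$ satisfying \eqref{eq31}--\eqref{eq32} for $\beta=1$ (supplied by Theorem \ref{thm31}), it builds by hand a nested sequence of open sets $U_0\supseteq\overline{U_1}\supseteq U_1\supseteq\cdots$, using minimality at each stage to steer iterates of $x$ into the previous set while controlling the ergodic averages; any $z\in\bigcap_i\overline{U_i}$ then satisfies the $\liminf$ conditions of Theorem \ref{thm31} for $\beta=-1$. Your argument replaces this explicit construction with a soft Baire-category step: Proposition \ref{18-10-19a} plus Corollary \ref{cor37} give ergodic invariant measures $\nu_\pm$ with $\int F\,\mathrm d\nu_+>0>\int F\,\mathrm d\nu_-$, and minimality upgrades the (non-empty, $\phi$-invariant, $G_\delta$) sets of forward/backward $\nu_\pm$-quasi-generic points to dense $G_\delta$ sets, after which Baire hands you the common point. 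Your approach is shorter and more conceptual, and it makes the role of the invariant measures explicit; the paper's approach is more self-contained in that it does not invoke Proposition \ref{18-10-19a} (whose proof already packages a similar passage through empirical measures). Both routes use minimality in the same essential way---to force a non-empty invariant set to be dense---but yours delegates the ``find a good point'' step to the Baire category theorem rather than to an inductive construction.
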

\begin{proof}
In light of Lemma \ref{lemma22} and Corollary \ref{cor36} it suffices to show that the existence of an $e^{F}$-conformal measure for $\phi$ implies that there exists an $e^{-F}$-conformal measure for $\phi$. Assume therefore that we have an $e^{F}$-conformal measure. By Theorem \ref{thm31} there exists a point $x\in X$ such that
\begin{equation} \label{ei41}
\limsup_{k} \frac{1}{k}\sum_{i=0}^{k-1} F(\phi^{i}(x))\leq 0
\end{equation}
and 
\begin{equation}\label{ei42}
\limsup_{k} \frac{1}{k}\sum_{i=1}^{k} -F(\phi^{-i}(x))\leq 0 \ .
\end{equation}
We construct now a sequence $\left\{U_i\right\}_{i=0}^{\infty}$ of non-empty open sets in $X$ and two increasing sequences $\left\{N_i\right\}_{i=1}^{\infty}$, $\left\{M_i\right\}_{i=1}^{\infty}$ of natural numbers such that
\begin{equation} \label{ei51}
U_{i-1} \supseteq \overline{U_{i}} \supseteq U_{i} 
\end{equation}
and for all $y\in U_{i}$ we have
\begin{equation} \label{ei52}
\frac{1}{N_{i}}\sum_{j=1}^{N_{i}} F(\phi^{-j}(y)) \ < \ \frac{1}{i} \quad\text{ and }\quad \frac{1}{M_{i}} \sum_{j=0}^{M_{i}-1} -F(\phi^{j}(y)) \ < \ \frac{1}{i}
\end{equation}
when $i \geq 1$. To do this by induction set $U_0 = X$. Equation \eqref{ei41} implies that we can find a number $N_{1}$ such that
\begin{equation*}
\frac{1}{N_{1}}\sum_{i=1}^{N_{1}} F(\phi^{-i}(\phi^{N_{1}}(x)))=\frac{1}{N_{1}}\sum_{i=0}^{N_{1}-1} F(\phi^{i}(x)) \ < \ 1 \ .
\end{equation*}
By continuity we can find an open ball $V_{1}$ containing $\phi^{N_{1}}(x)$ such that for all $y\in V_{1}$ we have
\begin{equation*}
\frac{1}{N_{1}}\sum_{i=1}^{N_{1}} F(\phi^{-i}(y)) \ < \ 1 \ .
\end{equation*} 
Using \eqref{ei42} and minimality of $\phi$ we can pick $M_{1}\in \mathbb{N}$ with $\phi^{-M_{1}}(x)\in V_{1}$ such that
\begin{equation*}
\frac{1}{M_{1}}\sum_{i=0}^{M_{1}-1} -F(\phi^{i}(\phi^{-M_{1}}(x))) \ = \
 \frac{1}{M_{1}}\sum_{i=1}^{M_{1}} -F(\phi^{-i}(x))\ < \ 1 \  .
\end{equation*}
By continuity there is an open ball $U_{1} \subseteq V_1$ such that for all $y\in U_{1}$ we have the estimate
\begin{equation*}
\frac{1}{M_{1}}\sum_{i=0}^{M_{1}-1} -F(\phi^{i}(y)) \ < \ 1 \ .
\end{equation*}
This starts the induction. For the induction step assume that for some $n\in \mathbb{N}$ we have two finite increasing sequences $\{N_{i}\}_{i=1}^{n}$ and $\{M_{i}\}_{i=1}^{n}$ of natural numbers and $n+1$ non-empty open sets $\{U_{i}\}_{i=0}^{n}$ such that for each $i\in \{1, \dots , n\}$ and each $y \in U_i$ both \eqref{ei51} and \eqref{ei52} hold. Using \eqref{ei41} and minimality of $\phi$ we can pick a $N_{n+1} >N_{n}$ such that $\phi^{N_{n+1}}(x)\in U_{n}$ and
\begin{equation*}
\frac{1}{N_{n+1}} \sum_{i=1}^{N_{n+1}} F(\phi^{-i}(\phi^{N_{n+1}}(x)))   =
\frac{1}{N_{n+1}}\sum_{i=0}^{N_{n+1}-1} F(\phi^{i}(x)) \ <\ \frac{1}{n+1} \ .
\end{equation*}
By continuity there is a non-empty open set $V_{n+1}\subseteq U_{n}$ such that for all $y\in V_{n+1}$, 
\begin{equation*}
\frac{1}{N_{n+1}} \sum_{i=1}^{N_{n+1}} F(\phi^{-i}(y)) \  <\ \frac{1}{n+1} \ .
\end{equation*}
Now use \eqref{ei42} to pick $M_{n+1}>M_{n}$ such that $\phi^{-M_{n+1}}(x)\in V_{n+1}$ and
\begin{equation*}
\frac{- 1}{M_{n+1}} \sum_{i=0}^{M_{n+1}-1} F(\phi^{i}(\phi^{-M_{n+1}}(x)))=
\frac{-1}{M_{n+1}}\sum_{i=1}^{M_{n+1}} F(\phi^{-i}(x)) \ <\ \frac{1}{n+1} \ .
\end{equation*}
Choose $U_{n+1}$ open and non-empty with $\overline{U_{n+1}} \subseteq V_{n+1}$ such that for all $y\in U_{n+1}$ 
\begin{equation*}
\frac{1}{M_{n+1}} \sum_{i=0}^{M_{n+1}-1} -F(\phi^{i}(y)) \ <\ \frac{1}{n+1}\ .
\end{equation*}
This completes the induction step and hence the construction of the sequences $\left\{U_i\right\}_{i=0}^{\infty}$, $\left\{N_i\right\}_{i=1}^{\infty}$ and $\left\{M_i\right\}_{i=1}^{\infty}$. The intersection $\bigcap_{i} \overline{U_{i}}$ contains a point $z$ which must satisfy that
\begin{equation*}
\liminf_n
\frac{1}{n}\sum_{j=1}^{n} F(\phi^{-j}(z)) \ \leq \ 0  
\end{equation*}
and
\begin{equation*}
\liminf_n\frac{1}{n} \sum_{i=0}^{n-1} -F(\phi^{i}(z))  \ \leq \ 0 \ .
\end{equation*}
It follows therefore from Theorem \ref{thm31} that there exists an $e^{-F}$-conformal measure.

\end{proof}

\section{Variations with $\beta$}

In this section we describe two examples to show how the structure of the collection of $e^{\beta F}$-conformal measures can vary with $\beta$.

\subsection{Variation of ergodic conformal measures of type $I_{\infty}$} \label{section7}

An atomic $e^{\beta F}$-conformal measure concentrated on a single infinity $\phi$-orbit is of type $I_{\infty}$. The following result shows that such $e^{\beta F}$-conformal measures occur in abundance for any non-periodic homeomorphism provided the potential $F$ is well-chosen. 

\begin{thm}\label{thm73}
Let $X$ be a compact metric space and $\phi : X \to X$ a homeomorphism. Let $\{x_{1},x_{2}, \dots, x_{q}\} \subseteq X$ be a finite subset consisting of points $x_p$ in $X$ that are not periodic under $\phi$ and have disjoint orbits, and choose a $\beta_{p}  \ < \ 0$ and an interval $J_{p}$ of the form $]-\infty, \beta_{p}]$ or $]-\infty, \beta_{p}[$ for each $1\leq p \leq q$. There exists a continuous function $F:X\to \mathbb{R}$ such that there is an $e^{\beta F}$-conformal measure concentrated on the $\phi$-orbit of $x_p$ if and only if
\begin{equation*}
\beta \in J_{p} 
\end{equation*}
for $p = 1,2,\cdots , q$. The potential $F$ can be chosen such that the flow $\alpha^{F}$ is approximately inner.
\end{thm}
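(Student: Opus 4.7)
My plan is to reduce, via Lemma \ref{25-10-19}(3), to the construction of a continuous potential $F$ whose Birkhoff sums $S_k(F)(x_p)$ have a prescribed logarithmic growth, then to build $F$ as a superposition of continuous coboundary blocks localized along each orbit $\mathcal O_p$.

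First I would observe that for a non-periodic point $x_p$, Lemma \ref{25-10-19}(3) gives: an $e^{\beta F}$-conformal measure concentrated on the orbit of $x_p$ exists iff $\sum_{k\in\mathbb Z} e^{\beta S_k(F)(x_p)} < \infty$. A direct estimate shows that $S_k(F)(x_p) \sim A_p\log|k|$ with $A_p = -1/\beta_p > 0$ makes this series converge iff $\beta < \beta_p$, realizing the open case $J_p = (-\infty, \beta_p)$; inserting a secondary term $B_p\log\log|k|$ with $B_p/A_p > 1$ pushes the borderline point $\beta = \beta_p$ into convergence, giving the closed case $J_p = (-\infty, \beta_p]$.

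Next I would construct $F_p$ for each orbit separately. Fix a very small open neighborhood $V_0^{(p)}$ of $x_p$ and a continuous bump $\psi_0^{(p)}$ supported in $V_0^{(p)}$ with $\psi_0^{(p)}(x_p) = 1$. Set $\psi_k^{(p)} := \psi_0^{(p)}\circ \phi^{-k}$, a bump at $\phi^k(x_p)$. Choose weights $c_k^{(p)}$ reproducing the required growth ($c_k^{(p)} = A_p\log|k|$, with a log-log refinement in the closed case) and define
\[
F_p \ := \ \sum_{k\in\mathbb Z} c_k^{(p)}\bigl(\psi_k^{(p)} - \psi_{k+1}^{(p)}\bigr).
\]
Since $\psi_k^{(p)} - \psi_{k+1}^{(p)} = \psi_{k+1}^{(p)}\circ\phi - \psi_{k+1}^{(p)}$ is a continuous coboundary, each summand has zero integral against every $\phi$-invariant measure. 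Reindexing, $F_p = \sum_k (c_k^{(p)} - c_{k-1}^{(p)})\psi_k^{(p)}$, which converges uniformly because $|c_k^{(p)} - c_{k-1}^{(p)}| = O(1/|k|)$. Evaluating on $\mathcal O_p$ gives $F_p(\phi^k(x_p)) = c_k^{(p)} - c_{k-1}^{(p)}$, and telescoping yields $S_k(F_p)(x_p) = c_{k-1}^{(p)} - c_{-1}^{(p)} \sim A_p\log|k|$ as needed. Setting $F = \sum_p F_p$, a dominated-convergence argument using $\phi$-invariance shows $\int F\,\mathrm{d}\nu = 0$ for every $\phi$-invariant Borel probability measure $\nu$, so by Theorem \ref{thm41} the flow $\alpha^F$ is approximately inner.

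The hard part will be controlling the cross-contribution of $F_p$ to the Birkhoff sums along the other orbits $\mathcal O_q$, $q\neq p$. In the general case of dense orbits, $V_0^{(p)}$ unavoidably meets $\mathcal O_q$, so $F_p$ is nonzero at many points of $\mathcal O_q$. I would address this by taking $V_0^{(p)}$ so small---and $\psi_0^{(p)}$ so sharply peaked at $x_p$---that the contribution of $F_p$ to $S_k(F)(x_q)$ remains of lower order than $A_q\log|k|$, so the prescribed behavior at $x_q$ is still determined by $F_q$ alone. A secondary subtlety is that if $x_p$ is recurrent to $V_0^{(p)}$ the bumps $\psi_k^{(p)}$ need not have pairwise disjoint supports; this is handled by shrinking $V_0^{(p)}$ further and interpreting the defining series through its (uniformly convergent) rearranged form, so that the relevant Birkhoff sums still exhibit the required logarithmic asymptotics.
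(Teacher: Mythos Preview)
Your reduction to Lemma \ref{25-10-19}(3) and the idea of writing $F$ as a sum of continuous coboundaries (so that $\int F\,\mathrm{d}\nu=0$ for every invariant $\nu$ and $\alpha^F$ is approximately inner by Theorem \ref{thm41}) are exactly right and match the paper. The construction itself, however, has a genuine gap.

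The step ``$F_p=\sum_k (c_k^{(p)}-c_{k-1}^{(p)})\psi_k^{(p)}$ converges uniformly because $|c_k^{(p)}-c_{k-1}^{(p)}|=O(1/|k|)$'' fails. The coefficients are not summable, so uniform convergence requires the supports of the $\psi_k^{(p)}$ to be essentially disjoint. But $\psi_k^{(p)}=\psi_0^{(p)}\circ\phi^{-k}$ all come from a \emph{single} neighborhood $V_0^{(p)}$, and whenever the orbit of $x_p$ is recurrent (e.g.\ for any minimal $\phi$) infinitely many iterates $\phi^{-k}(x_p)$ lie in $V_0^{(p)}$. At such a return time $k$ you have $\psi_k^{(p)}(x_p)$ close to $1$, so the value of your series at $x_p$ involves a subsum of $\sum 1/|k|$ over a set of positive density in $\mathbb Z$, which diverges. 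Shrinking $V_0^{(p)}$ does not help: the return set stays infinite with positive density. For the same reason your telescoping identity $F_p(\phi^k(x_p))=c_k^{(p)}-c_{k-1}^{(p)}$ is false; there are uncontrolled extra terms from every return.

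The paper repairs exactly this point, and the repair is the technical heart of the proof. Instead of a single bump translated along the orbit, it places at each $\phi^n(x_p)$ its own bump $g_n^p$ supported in a freshly chosen neighborhood $V_n^p$; these neighborhoods are shrunk recursively so that the first return of \emph{any} orbit $\mathcal O_l$ to $\overline{W_n^p}$ occurs only after time $N_n$, with $N_n$ chosen large via Lemma \ref{lemteknisk}. Moreover the $n$-th block is a coboundary over $2n+1$ steps, $f_n^p=g_n^p-g_n^p\circ\phi^{2n+1}$, not over one step. These two devices together force uniform convergence of $F=\sum_n f_n$ and, via the cancellation Lemmas \ref{lulig1}--\ref{lulig2}, squeeze the Birkhoff sums between explicit bounds $\sum a_i^p\le S_{m+1}(F)(x_p)\le\sum b_i^p$ (Lemma \ref{p33}). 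The final step is then to pick the sequences $a_i^p,b_i^p$ so that $\exp(\beta\sum a_i^p)$ and $\exp(\beta\sum b_i^p)$ straddle the convergence threshold at $\beta=\beta_p$, with a $c_n$-type versus $d_n$-type choice distinguishing closed from open $J_p$. Your log/log-log heuristic for hitting the threshold is correct in spirit, but it needs this two-sided control to be made rigorous.
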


The proof of Theorem \ref{thm73} is an explicit construction which is rather long and technical and it has been relegated to Appendix \ref{AppA}.

\subsection{Variation of the factor type} 

In none of the examples of minimal homeomorphisms we have considered so far, have we seen the factor type of the $e^{\beta F}$-conformal measures vary with $\beta$. In this section we describe how an example by Baggett, Medina and Merrill constructed for the proof of Theorem 3 in \cite{BMM} can be reworked to show that such a variation can occur for any irrational rotation of the circle. Neither the statement nor the proof of Theorem 3 in \cite{BMM} is quite enough for our purposes, but a careful elaboration of the construction in \cite{BMM} can be performed to yield the following

\begin{prop}\label{prop74} 
Let $\phi : \mathbb T \to \mathbb T$ be an irrational rotation of the circle $\mathbb T$ and let $\lambda$ be Lebesgue measure of $\mathbb T$. There is a continuous real-valued function $F : \mathbb T \to \mathbb R$, a non-negative Borel function $v: \mathbb T \to \mathbb R$ and an increasing sequence $\{n_k\}_{k=1}^{\infty}$ in $\mathbb N$ such that
\begin{itemize}
\item[i)] $F(x) = v(x) - v (\phi(x))$ for $\lambda$-almost all $x$, 
\item[ii)] $\int_{\mathbb T} F \ \mathrm{d} \lambda \ = \ 0$,
\item[iii)] $v \notin L^1(\lambda)$, and
\item[iv)] $\left|\sum_{i=0}^{n_k-1} F\left(\phi^i(x)\right)\right| \leq 2$ for all $x \in \mathbb T$ and all $k$.
\end{itemize}
\end{prop}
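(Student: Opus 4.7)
The plan is to elaborate on the construction from the proof of Theorem 3 in \cite{BMM}. Writing $\phi(x) = x + \alpha \pmod{1}$ and letting $p_k/q_k$ denote the continued-fraction convergents of $\alpha$, set $\beta_k := q_k\alpha - p_k$, so that $|\beta_k| < 1/q_{k+1}$. I will build $v$ as a pointwise-finite series $v = \sum_l u_l$ of non-negative continuous functions $u_l : \mathbb{T} \to [0,\infty)$, each essentially supported in a Rokhlin tower of height $q_{k_l}$ under $\phi$ so that $u_l$ is nearly $\phi^{q_{k_l}}$-invariant. Setting $F := v - v\circ\phi = \sum_l (u_l - u_l\circ\phi)$, the scales of the $u_l$'s are to be arranged so that this series converges uniformly in $C(\mathbb{T})$.

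Properties (i) and (ii) are then immediate, the latter from $\phi$-invariance of Lebesgue measure. For (iii), I would tune peak heights so that $\sum_l \int u_l \, \mathrm{d}\lambda = \infty$. For (iv), the key is the telescoping identity
\begin{equation*}
S_n(F)(x) \ = \ \sum_l \bigl(u_l(x) - u_l(\phi^n(x))\bigr) \quad \text{for almost all } x.
\end{equation*}
Since $S_n(F)$ is continuous whenever $F$ is, a uniform $\lambda$-a.e.\ bound on the right-hand side propagates to a uniform bound for every $x \in \mathbb{T}$. Choosing $n_k = q_{m(k)}$ with $m(k)$ so sparse that $|\beta_{m(k)}|$ is small compared to the aggregate Lipschitz content of the $u_l$'s then allows one to bound the right-hand side by $2$.

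The main obstacle is the arithmetic tension between requiring $v \notin L^1(\lambda)$ (substantial aggregate area of the $u_l$'s) and $\|S_{n_k}(F)\|_\infty \leq 2$ (small aggregate Lipschitz modulus), together with the geometric constraint $\sum_l \lambda(\mathrm{supp}(u_l)) \leq 1$. A naive model in which each $u_l$ is a single plateau bump of height $c_l$ and transition width $w_l$ runs into the obstruction that $\sum_l c_l w_l = \infty$ and $\sum_l c_l/w_l < \infty$ cannot both hold while $\sum_l w_l \leq 1$; indeed the Cauchy--Schwarz inequality forces $(\sum_l c_l)^2 \leq (\sum_l c_l w_l)(\sum_l c_l/w_l)$, and the required asymmetry between the two factors is not attainable inside the unit interval with a single-scale bump. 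The resolution, in the spirit of \cite{BMM}, is a multi-scale ``comb'' construction in which each $u_l$ is itself a sum of many plateau teeth distributed along a $\phi$-orbit segment within the tower of height $q_{k_l}$, multiplying the effective integrated mass of $u_l$ while keeping the per-tooth Lipschitz constant fixed. Carrying this out carefully, then verifying non-negativity of $v$, the explicit constant $2$ in (iv), and the continuity of $F$, will complete the argument.
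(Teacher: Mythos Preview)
Your outline is in the right spirit, and the Cauchy--Schwarz obstruction to a single-bump model is exactly the point. The gap is in the mechanism you propose for (iv). You want to bound $\sum_l |u_l(x)-u_l(\phi^{n_k}x)|$ by $|\beta_{m(k)}|$ times the ``aggregate Lipschitz content'' $\sum_l \mathrm{Lip}(u_l)$, but that aggregate is necessarily infinite. Indeed, if $u_l$ is a comb of $N_l$ teeth of height $h_l$ and base $w_l$, then $v\notin L^1$ requires $\sum_l N_l h_l w_l=\infty$ while a.e.\ finiteness of $v$ (via Borel--Cantelli on the supports) requires $\sum_l N_l w_l<\infty$; together these force $\sup_l h_l=\infty$, hence $\mathrm{Lip}(u_l)\ge h_l/w_l\ge h_l$ is unbounded and $\sum_l\mathrm{Lip}(u_l)=\infty$. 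No choice of $m(k)$, however sparse, rescues a bound of that form: the trouble is the terms $u_l$ with $l$ large, whose Lipschitz constants were not yet known when $n_k$ was fixed.

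The paper resolves this by two departures from your plan. First, the teeth of $\omega_l$ are placed at the \emph{lattice} points $j/q_l$ rather than at orbit points, so that $\omega_l$ is exactly $1/q_l$-periodic. Second, the construction is interleaved: one chooses $\omega_1$, then $n_2$ (by minimality, so that $[n_2\alpha]$ is below a modulus of continuity for $\omega_1$), then $q_2$ via Dirichlet with $q_2$ so large that $|\alpha-p_2/q_2|\le (2^3 2^5 n_2 q_2)^{-1}$, then $\omega_2$, then $n_3$, then $q_3$ with $|\alpha-p_3/q_3|\le (3^3 2^7 n_3 n_2 q_3)^{-1}$, and so on. The point is that $q_l$ is chosen \emph{after} $n_2,\dots,n_l$, so for every $k\le l$ the number $n_k\alpha$ lies within $(l^3 2^{2l+1}q_l)^{-1}$ of a multiple of $1/q_l$; combined with exact $1/q_l$-periodicity this gives $|\omega_l(x)-\omega_l(x+n_k\alpha)|\le \mathrm{Lip}(\omega_l)\cdot(l^3 2^{2l+1}q_l)^{-1}\le 1/l^2$ for $l\ge k$. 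The terms with $l<k$ are handled by the uniform-continuity choice of $n_k$. Thus every term contributes at most $1/l^2$ and $\sum_l 1/l^2\le 2$. Neither continued-fraction denominators nor orbit-point teeth give this control; the freedom to inflate $q_l$ via Dirichlet \emph{after} seeing $n_2,\dots,n_l$ is what makes the ``future'' terms summable.
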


The last condition iv), which is crucial for our purpose, does not follow for free from the construction in \cite{BMM} so we give the details in Appendix \ref{AppB}. Here we just point out the following

\begin{prop}\label{prop74(2)} 
Let $\phi : \mathbb T \to \mathbb T$ be an irrational rotation of the circle $\mathbb T$. There is a continuous real-valued function $F : \mathbb T \to \mathbb R$ such that
\begin{itemize}
\item There are $e^{\beta F}$-conformal measures for all $\beta \in \mathbb R$.
\item For $\beta \geq 0$ there is exactly one ergodic $e^{\beta F}$-conformal measure of type $II_1$.
\item For $\beta < 0$ all ergodic $e^{\beta F}$-conformal measures are of type $II_{\infty}$ or type $III$.
\end{itemize}
\end{prop}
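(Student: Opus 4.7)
The plan is to let $F$, $v$ and $\{n_k\}$ be as provided by Proposition \ref{prop74}, and exploit non-negativity of $v$ (for $\beta \geq 0$), non-integrability of $v$ (for $\beta < 0$), and the uniform bound iv) along $\{n_k\}$ (to rule out type $I_\infty$). Existence of an $e^{\beta F}$-conformal measure for every $\beta \in \mathbb R$ is immediate: $\phi$ is uniquely ergodic with Lebesgue measure $\lambda$ as its invariant probability measure, and property ii) states $\int_{\mathbb T} F \,\mathrm{d}\lambda = 0$, so Theorem \ref{18-10-19b} (equivalently Corollary \ref{cor37}) applies.

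For $\beta \geq 0$, set $Z_\beta = \int_{\mathbb T} e^{-\beta v}\,\mathrm{d}\lambda$ and $m_\beta = Z_\beta^{-1} e^{-\beta v}\,\mathrm{d}\lambda$. Because $v \geq 0$, we have $0 < Z_\beta \leq 1$, so $m_\beta$ is a probability measure equivalent to $\lambda$. Using $\phi$-invariance of $\lambda$, $\mathrm{d}(m_\beta \circ \phi)/\mathrm{d}m_\beta = e^{\beta v - \beta v \circ \phi}$ $\lambda$-a.e., which equals $e^{\beta F}$ by property i); hence $m_\beta$ is $e^{\beta F}$-conformal. Since $\lambda$ is non-atomic, ergodic and $\phi$-invariant, Theorem \ref{15-08-19e} identifies $m_\beta$ as ergodic and of type $II_1$. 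For the uniqueness of type $II_1$ ergodic conformal measures, let $m$ be any such; by Theorem \ref{15-08-19e} it is equivalent to a $\phi$-invariant Borel probability measure, which by unique ergodicity and Lemma \ref{22-08-19} must be $\lambda$. Writing $m = g\,\mathrm{d}\lambda$, the conformal relation becomes $g \circ \phi = e^{\beta F} g$ $\lambda$-a.e.; substituting $\beta F = \beta(v - v\circ\phi)$ gives $(\log g + \beta v)\circ \phi = \log g + \beta v$ $\lambda$-a.e., so ergodicity of $\lambda$ forces $\log g + \beta v$ to be constant, and normalization yields $m = m_\beta$.

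For $\beta < 0$, I rule out the remaining types. Since $\phi$ is free there are no type $I_p$ measures with $p$ finite. For type $I_\infty$, Lemma \ref{25-10-19}(3) demands a point $x$ with $\sum_{k \in \mathbb Z} e^{\beta S_k(F)(x)} < \infty$, but property iv) gives $|S_{n_k}(F)(x)| \leq 2$ uniformly in $x$, whence $e^{\beta S_{n_k}(F)(x)} \geq e^{-2|\beta|}$ for every $k$, and the sum diverges along $\{n_k\}$. To rule out type $II_1$, the argument of the previous paragraph would again force an ergodic type $II_1$ conformal measure to have the form $Z^{-1} e^{-\beta v}\,\mathrm{d}\lambda$; but $v \geq 0$ together with the inequality $e^{|\beta|v} \geq 1 + |\beta| v$ and property iii) yield
\[
\int_{\mathbb T} e^{-\beta v}\,\mathrm{d}\lambda \;=\; \int_{\mathbb T} e^{|\beta|v}\,\mathrm{d}\lambda \;\geq\; 1 + |\beta|\int_{\mathbb T} v\,\mathrm{d}\lambda \;=\; +\infty,
\]
so no such normalization exists. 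Hence every ergodic $e^{\beta F}$-conformal measure for $\beta < 0$ is of type $II_\infty$ or $III$.

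The main obstacle is the uniqueness step for $\beta \geq 0$: one must transport the (only $\lambda$-a.e.) cohomological identity $F = v - v\circ\phi$ into the conformal relation on $m$, which works precisely because the candidate $m$ is a priori equivalent to $\lambda$, and then close the argument with ergodicity of $\lambda$. Everything else is a routine combination of properties i)--iv) of Proposition \ref{prop74} with the type classification of Theorem \ref{15-08-19e}.
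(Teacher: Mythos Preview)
Your proof is correct and follows essentially the same route as the paper's: existence via condition ii) and Theorem \ref{18-10-19b}/Corollary \ref{cor37}; exclusion of type $I_\infty$ via condition iv); construction of the type $II_1$ measure $e^{-\beta v}\,\mathrm{d}\lambda$ for $\beta \geq 0$; and, for $\beta < 0$, showing any ergodic type $II_1$ measure would be forced to be $c\,e^{-\beta v}\,\mathrm{d}\lambda$ with $e^{-\beta v}\notin L^1(\lambda)$. The only notable differences are cosmetic: you use the elementary bound $e^{|\beta|v}\geq 1+|\beta|v$ where the paper invokes Jensen's inequality, and you spell out the uniqueness argument for $\beta\geq 0$ more fully than the paper, which simply cites unique ergodicity.
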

\begin{proof} It follows from condition ii) in Proposition \ref{prop74} and Corollary \ref{cor37} that there are $e^{\beta F}$-conformal measures for all $\beta \in \mathbb R$. It follows from condition iv) in Proposition \ref{prop74} that the sum \eqref{eq73} diverges for all $\beta \in \mathbb R$ and all $x \in \mathbb T$. There are therefore no atomic $e^{\beta F}$-conformal measures for any $\beta \in \mathbb R$, and hence all ergodic $e^{\beta F}$-conformal measures are of type $II_1$, $II_{\infty}$ or $III$. Since $v$ is non-negative and Borel, $e^{-\beta v} \in L^1(\lambda)$ for all $\beta \geq 0$. It follows therefore from condition i) in Proposition \ref{prop74} that
$$
\mathrm{d}m = e^{-\beta v} \mathrm{d}\lambda
$$
is an $e^{\beta F}$-conformal measure $m$, clearly of type $II_1$, for all $\beta \geq 0$. For each such $\beta$ it is the only  $e^{\beta F}$-conformal measure of type $II_1$ because $\lambda$ is the only $\phi$-invariant Borel probability measure. It remains to show that there are no $e^{\beta F}$-conformal measures of type $II_1$ when $\beta < 0$. Assume for a contradiction that there is such a measure. It is concentrated on the set \eqref{eqadd3} and it follows therefore from Theorem 1.4 of \cite{GS} that there is also an ergodic $e^{\beta F}$-conformal measure $m$ of type $II_1$. Then $m$ is equivalent to $\lambda$ and the Radon-Nikodym derivative
$$
h \ = \ \frac{ \mathrm{d} m}{\mathrm{d} \lambda}
$$
is in $L^1(\lambda)$ and strictly positive $\lambda$-almost everywhere. Set $u = \log h$. By using that $m$ is $e^{\beta F}$-conformal we find that $e^{\beta F(x)}h(x) = h\circ \phi(x)$ for $\lambda$-almost all $x \in \mathbb T$, or $\beta F(x) = u(  \phi(x)) - u(x)$ for $\lambda$-almost all $x$ and $e^{u} \in L^1(\lambda)$. It follows that $(u + \beta v) \circ \phi(x) = (u + \beta v)(x)$ for $\lambda$-almost all $x$, and hence by ergodicity that there is a constant $t \in \mathbb R$ such that $-\beta v(x) = u(x) +t$ for $\lambda$-almost all $x$. This implies that
$$
\int_{\mathbb T} e^{-\beta v} \ \mathrm{d}\lambda \ = \ e^t \int_{\mathbb T} e^{ u} \ \mathrm{d}\lambda  \ < \ \infty \ .
$$
However, since $v \notin L^1(\lambda)$ by condition iii) in Proposition \ref{prop74} it follows from Jensen's inequality that
$$
\int_{\mathbb T} e^{-\beta v} \ \mathrm{d}\lambda \ \geq \ \exp \left( \int_{\mathbb T} - \beta v \ \mathrm{d} \lambda \right) \ = \ \infty \ .
$$
This contradiction shows that all ergodic $e^{\beta F}$-conformal measures are of type $II_{\infty}$ or type $III$ when $\beta < 0$.

\end{proof}

We note that Proposition \ref{prop74(2)} does not reveal the complete picture about the $e^{\beta F}$-conformal measures for the triple $(X,\phi,F)$. In particular, we do not know how many $e^{\beta F}$-conformal measures there are when $\beta \neq 0$. 

\section{Appendix A: Proof of Theorem \ref{thm73}}\label{AppA}
We assume throughout this appendix that we have a finite non-empty set $P:=\{x_1,x_2, \cdots, x_{q}\}$ of points in $X$ that are not periodic and have disjoint orbits under $\phi$. 
We choose for each point $x_{p}\in P$ two sequences $\{a_{i}^{p}\}_{i=0}^{\infty}$ and $\{b_{i}^{p}\}_{i=0}^{\infty}$ of positive real numbers satisfying that:
\begin{enumerate}
\item  $\lim_{i\to \infty} b_{i}^{p}=0$,
\item  $\sum_{i=0}^{\infty} a_{i}^{p} \ = \ \sum_{i=0}^{\infty} b_{i}^{p} \ = \ \infty$, and
\item $0  <  a_{i}^{p}  \leq  b_{i}^{p}$ for each $i\in \mathbb{N}\cup\{0\}$ and $\sum_{i=0}^{\infty} (b_{i}^{p}-a_{i}^{p}) \ = \ \infty$.
\end{enumerate}
Set $S_{i}:=\max_{p } \{b_{i}^{p}\}$ for each $i\in \mathbb{N}\cup\{0\}$. Then $\{S_{i} \}_{i=0}^{\infty}$ is a sequence of positive numbers that converges to $0$. 

\begin{lemma}\label{lemteknisk}
There exists an increasing sequence of natural numbers $\{N_i\}_{i=0}^{\infty}$ such that $N_n \geq 3(n+1)$ for all $n$, and for all $p \in \{1,2,3,\cdots ,q\}$, 
\begin{enumerate}
\item[$(a_{n})$] $b_{j}^{p}<2^{-n}$ when $j\geq N_{n}-1$, and

\smallskip
\item[$(b_{n})$]
$
\sum_{k=n+1}^{j-1} b_{k}^{p} \ \geq \ \sum_{k=0}^{j-1} a_{k}^{p}+\sum_{i=0}^{n} (2i+1)S_{i} \  $
when $j \geq N_n$.
\end{enumerate}
\end{lemma}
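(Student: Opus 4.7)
The plan is to construct $\{N_n\}$ inductively, after rewriting $(b_n)$ in a form whose left-hand side is monotone in $j$ and diverges. For $j \geq n+2$ we split
$$\sum_{k=0}^{j-1} a_k^p \ = \ \sum_{k=0}^{n} a_k^p \ + \ \sum_{k=n+1}^{j-1} a_k^p,$$
so condition $(b_n)$ is equivalent to
$$\sum_{k=n+1}^{j-1}\bigl(b_k^p - a_k^p\bigr) \ \geq \ \sum_{k=0}^{n} a_k^p \ + \ \sum_{i=0}^{n}(2i+1)S_i.$$
The right-hand side is a constant $C_{n,p}$ depending only on $n$ and $p$. By hypothesis (3) the terms $b_k^p - a_k^p$ are non-negative and their series diverges, so the left-hand side is non-decreasing in $j$ and tends to $+\infty$; hence there is a threshold $J_{n,p}$ such that the inequality holds for all $j \geq J_{n,p}$.

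For $(a_n)$, I would use hypothesis (1): since $b_j^p \to 0$ for each of the finitely many indices $p \in \{1,\dots,q\}$, there is a common threshold $K_n$ with $b_j^p < 2^{-n}$ whenever $j \geq K_n - 1$ and $1 \leq p \leq q$.

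The induction is then immediate. Set
$$N_0 \ := \ \max\Bigl\{3,\ K_0,\ \max_{1\leq p \leq q} J_{0,p}\Bigr\},$$
and having chosen $N_0 < N_1 < \cdots < N_{n-1}$ satisfying the stated properties, set
$$N_n \ := \ \max\Bigl\{N_{n-1}+1,\ 3(n+1),\ K_n,\ \max_{1\leq p\leq q} J_{n,p}\Bigr\}.$$
Monotonicity of the rewritten left-hand side of $(b_n)$ in $j$ guarantees that the inequality, once it holds at $j = N_n$, holds for every $j \geq N_n$; the threshold $K_n$ takes care of $(a_n)$ for all such $j$.

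There is no genuine obstacle here: the lemma is a bookkeeping statement combining the divergence hypotheses (2) and (3) with the vanishing hypothesis (1), and the only subtlety is taking maxima over the finite index set $\{1,\dots,q\}$ to make the estimates uniform in $p$.
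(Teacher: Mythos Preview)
Your proof is correct and is precisely the elaboration the paper has in mind; the paper's own proof is the single line ``This follows easily from (1) and (3).'' One cosmetic point: your closing summary names hypothesis (2) among the ingredients, but your argument (like the paper's) actually uses only (1) for $(a_n)$ and (3) for $(b_n)$.
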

\begin{proof} This follows easily from (1) and (3).
\end{proof}


To construct $F$ we will first recursively construct a sequence of functions $\{f_{n}\}_{n=0}^{\infty}\subseteq C(X)$ and then define $F$ as their sum. For this purpose we construct for each $n = 0,1,2,3, \cdots$ and each $p \in \{1,2,\cdots, q\}$ an open neighbourhood $V^p_n$ of $\phi^n(x_p)$ such that
\begin{itemize}
\item[($1_n$)] $V^p_n \cap \phi^{-2n-1}\left(V^p_n\right) = \emptyset$  .
\end{itemize}
Set
$$
W^p_n = V^p_n \cup \phi^{-2n-1}\left(V^p_n\right) \ .
$$
We will arrange that
\begin{itemize}
\item[($2_n$)] $W^p_n \cap W^{p'}_n = \emptyset$ when $p \neq p'$, and
\item[($3_n$)] if $\phi^{i}(x_{l} ) \in \overline{W_{n}^{p}} \backslash \{\phi^{n}(x_{p}), \phi^{-n-1}(x_{p})\}$ for any $l$ then $|i|>N_{n}$,
\end{itemize}
where $\{N_n\}_{n=0}^{\infty}$ is the sequence from Lemma \ref{lemteknisk}. As a final requirement we arrange that
\begin{itemize}
\item[($4_n$)] if $i \in \{0,1,\cdots, n-1\}$ and $\{\phi^n(x_p), \phi^{-n-1}(x_p)\} \cap\bigcup_{l=1}^q \overline{W^l_i} = \emptyset$, then $W^p_n \cap \bigcup_{p=1}^q \overline{W^p_i} = \emptyset$ . 
\end{itemize}
Since condition $(4_0)$ is vacuous it is easy to construct the sets $V^p_0, \ 1\leq p \leq q$. Assume that we have constructed the sets $V^p_i, 1 \leq p \leq q, \ 0 \leq i \leq n-1$. Pick a family of mutually disjoint open sets $U_{+}^{p}, U_{-}^{p}, p = 1,\cdots ,q$, such that $\phi^{n}(x_{p})\in U_{+}^{p}$ and $\phi^{-n-1}(x_{p})\in U_{-}^{p}$ for each $x_{p}\in P$. For each $p \in \{1, \dots , q\}$, set
\begin{equation*}
M_{n}^{p}:=\left\{ \phi^{i}(x_{l}) \ | \ |i|\leq N_{n} +2n+1, \ l\in \{1, \dots q\} \right\}\setminus\{\phi^{n}(x_{p}), \phi^{-n-1}(x_{p})\}
\end{equation*}
and 
\begin{align*}
K_{n}^{p}  \ = \bigcup_{i,l} \overline{W^l_i}
\end{align*}
where we take the union over all $(i,l)$ such that $0 \leq i \leq n-1$ and $\{\phi^{n}(x_{p}), \phi^{-n-1}(x_{p})\} \cap \overline{W^l_i} = \emptyset$. Choose $V_{n}^{p}$ open with $\phi^{n}(x_{p})\in V_{n}^{p}$ such that
\begin{equation*}
V_{n}^{p}\subseteq \overline{V_{n}^{p}}\subseteq \left(\left(U_{+}^{p} \backslash  K^p_n \right) \cap \phi^{2n+1}\left( U_{-}^{p} \backslash K^p_n\right)\right)  \backslash M_n^p \ .
\end{equation*}
With this choice $(1_n)-(4_n)$ all hold and we have constructed the desired sequence of open sets. For each $p$ and $n$ we choose a function $g_{n}^{p} \in C(X)$ with $\supp g_{n}^p  \subseteq V_{n}^{p}$, $0 \leq g_{n}^{p}\leq b_{n}^{p}$ and $g_{n}^{p}(\phi^{n}(x_{p}))=b_{n}^{p}$, and set
$$
f_{n}^{p}\ = \ g_{n}^{p} \ - \ g_{n}^{p} \circ \phi^{2n+1} \ .
$$
Then $\supp f_{n}^{p} \subseteq W_{n}^{p}$ and $f_{n}^{p} \in C(X)$. Set
\begin{equation*}
f_{n}=\sum_{p=1}^{q} f_{n}^{p} \in C(X) \ .
\end{equation*}
Since $\supp f_{n}^{p} \subseteq W_{n}^{p}$ and $W_{n}^{p}\cap W_{n}^{l}=\emptyset$ for each $p\neq l$ we see from the construction that 
\begin{equation}\label{Remark 82}
\lVert f_{n} \rVert_{\infty} = \max_{p } \lVert f_{n}^{p} \rVert_{\infty} =S_{n} \ .
\end{equation}

\begin{lemma}
The sum $F = \sum_{n=0}^{\infty} f_n$ converges uniformly to a continuous function $F:X\to \mathbb{R}$.
\end{lemma}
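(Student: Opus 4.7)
The plan is to apply the Cauchy criterion for uniform convergence, exploiting the fact that for each $x\in X$ the indices $n$ for which $f_n(x)\neq 0$ are forced to be so sparse that they compensate for the failure of $\sum_n S_n$ to converge. By \eqref{Remark 82} we have $\|f_n\|_\infty=S_n$, and condition $(1)$ gives $S_n\to 0$; however $\sum_n S_n$ typically diverges because of $(2)$, so the Weierstrass $M$-test does not apply directly.

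The central step I would prove first is a growth lemma: if $0\le n<m$ and $W_n^p\cap W_m^l\neq\emptyset$, then $m\ge N_n$. To see this, since $W_m^l$ meets $\overline{W_n^p}$, the contrapositive of condition $(4_m)$ forces one of the two center points $\phi^m(x_l)$ or $\phi^{-m-1}(x_l)$ to lie in some $\overline{W_n^{p'}}$. Condition $(3_n)$ then produces either the inequality $|m|>N_n$ (respectively $m+1>N_n$), or an equality of the form $\phi^m(x_l)=\phi^n(x_{p'})$, $\phi^m(x_l)=\phi^{-n-1}(x_{p'})$, $\phi^{-m-1}(x_l)=\phi^n(x_{p'})$, or $\phi^{-m-1}(x_l)=\phi^{-n-1}(x_{p'})$. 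Each of these equalities is incompatible either with $n<m$ or with $n,m\ge 0$, once one uses that the orbits of $x_1,\ldots,x_q$ are mutually disjoint (which forces $l=p'$ and hence identification of the integer indices). Hence $m\ge N_n$.

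Using the growth lemma, for each $x\in X$ I enumerate the set $I(x):=\{n\ge 0:\exists p,\ x\in W_n^p\}$ as $n_1<n_2<\cdots$ and obtain $n_{i+1}\ge N_{n_i}$ for every $i$. Outside $I(x)$ the term $f_n(x)$ vanishes, and the disjointness $W_n^p\cap W_n^{p'}=\emptyset$ from $(2_n)$ together with \eqref{Remark 82} gives $|f_n(x)|\le S_n$ for all $n$. Next I would invoke $(a_{n_{i-1}})$ of Lemma \ref{lemteknisk} to get $S_{n_i}<2^{-n_{i-1}}$ for $i\ge 2$ (since $n_i\ge N_{n_{i-1}}>N_{n_{i-1}}-1$) and then sum geometrically:
\[
\left|\sum_{n\ge N}f_n(x)\right|\le\sum_{n_i\ge N}S_{n_i}\le S_{n_1}+\sum_{i\ge 1}2^{-n_i}\le \sup_{n\ge N}S_n+2^{1-N}.
\]

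Both terms on the right tend to $0$ as $N\to\infty$, independently of $x\in X$, so $\sum_n f_n$ is uniformly Cauchy; its uniform limit $F$ is therefore continuous as a uniform limit of continuous functions. The main obstacle is the growth lemma itself, which requires keeping straight the bookkeeping between conditions $(3_n)$ and $(4_m)$ together with the four possible equality cases that the disjointness of the orbits of $x_1,\ldots,x_q$ must rule out; after that, the tail bound is a routine application of the exponential decay built into $(a_n)$.
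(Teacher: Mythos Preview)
Your approach is essentially the same as the paper's: both proofs isolate the key ``growth'' step (if $x$ lies in the support of both $f_n$ and $f_m$ with $n<m$, then $m\ge N_n$) via the contrapositive of $(4_m)$ combined with $(3_n)$, and then use $(a_n)$ to turn this sparseness into a geometric tail bound $S_{i_1}+\sum 2^{-i_{j-1}}$. The paper does this inline on the finite partial sums $F_m-F_N$, while you phrase it as a separate lemma and work with the infinite tail; your explicit case analysis of the four equality possibilities in $(3_n)$ is a welcome elaboration of a step the paper passes over silently.

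One small indexing slip: in your displayed chain of inequalities you write $S_{n_1}+\sum_{i\ge 1}2^{-n_i}$ with $n_1$ the \emph{first} element of $I(x)$, but then bound this by $\sup_{n\ge N}S_n+2^{1-N}$. If $n_1<N$ this last inequality fails. What you want is to let $j$ be the least index with $n_j\ge N$ and write
\[
\sum_{n_i\ge N}S_{n_i}=\sum_{i\ge j}S_{n_i}\le S_{n_j}+\sum_{i\ge j}2^{-n_i}\le \sup_{n\ge N}S_n+2^{1-N},
\]
which is exactly what the paper does (with its $i_1$ denoting the first index between $N+1$ and $m$). This is cosmetic; the argument is correct once the indexing is repaired.
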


\begin{proof} Set $F_m = \sum_{n=0}^{m} f_n$. Let $\varepsilon>0$ be given, and choose $N$ such that
\begin{equation*}
S_{i} + \sum_{k=i}^{\infty} 2^{-k} < \varepsilon
\end{equation*}
for all $i \geq N$. We show that $\left|F_m(x) - F_N(x)\right| \leq \epsilon$ for all $m\geq N$ and all $x \in X$, which will prove the lemma.  Fix $x\in \supp \left(F_m - F_N\right)$. Then
\begin{equation*}
F_{m}(x)- F_{N}(x) =\sum_{n=N+1}^{m} f_{n}(x) \ .
\end{equation*}
Let $i_{1}<i_{2}<i_{3}<\dots < i_{l}$ be the numbers between $N+1$ and $m$ for which $x\in \supp f_{i_{j}}$. If $l = 1$ it follows from \eqref{Remark 82} that $\left|F_m(x) - F_N(x)\right| \leq S_{i_1} \leq \epsilon$.  Assume $l > 1$ and consider $1 \leq n < l$.  Since $x\in \supp f_{i_{n}}$ and $x\in \supp f_{i_{n+1}}$ there is a $p \in \{1, \dots , q\}$ such that $x\in W_{i_{n+1}}^{p}\cap \supp f_{i_{n}}$.  It follows from $(4_{i_{n+1}})$ that $\{\phi^{i_{n+1}}(x_{p}), \phi^{-i_{n+1}-1}(x_{p}) \} \cap  \overline{W_{i_{n}}^s}\neq \emptyset$ for some $s \in \{1,2,\cdots , q\}$ and then from $(3_{i_n})$ that $i_{n+1}\geq N_{i_{n}}$. Then $b_{i_{n+1}}^{r} \leq 2^{-i_{n}}$ for all $r\in \{1, \dots , q\}$ by $(a_{i_n})$ of Lemma \ref{lemteknisk}.  In combination with \eqref{Remark 82} this shows that
\begin{equation*}
\lvert F_{m}(x)- F_{N}(x) \rvert \ \leq  \ \sum_{n=N+1}^{m} \lvert f_{n}(x)\rvert \
\leq \ \sum_{j=1}^{l} \lvert f_{i_{j}}(x)\rvert
\ \leq \ S_{i_{1}} +\sum_{j=2}^{l} 2^{-i_{j-1}} \ \leq  \ \varepsilon \ .
\end{equation*}

\end{proof}

\begin{remark}
Notice that if $\nu$ is a $\phi$-invariant measure then for any $p$ and $n$,
$$
\int f_{n}^{p} \ \mathrm{d}\nu 
 \ = \ \int g_{n}^{p} \ \mathrm{d}\nu - \int g_{n}^{p} \circ \phi^{2n+1} \ \mathrm{d}\nu \ = \ 0 \ ,
$$
and hence $\int F \ \mathrm{d}\nu =0$. By Theorem \ref{thm41} the corresponding flow $\alpha^{F}$ is approximately inner.
 \end{remark}

\begin{remark}\label{rem1} By construction the functions $g_{n}^{p} , \ g_{n}^{p}\circ \phi^{2n+1}, \ p=1, 2, \dots, q, $ have disjoint support. Hence, for all $n$ we have the implications
\begin{equation*}
g_n^p(x) \ \neq \ 0 \ \Rightarrow \  g_{n}^{l}\circ \phi^{2n+1}(x)  \ = \ g_{n}^{l}\circ\phi^{-2n-1}(x)  \ = \ 0 \ 
\end{equation*}
for all $l \in\{ 1,2,\cdots, q\}$, and  
\begin{equation*}
g_n^p(x) \ \neq \ 0 \ \Rightarrow \  g_{n}^{l}(x) \ = \ 0 \ 
\end{equation*}
when $l \in\{ 1,2,\cdots, q\}\setminus \{p\}$.
\end{remark}

\begin{lemma} \label{lulig1}
Let $n\in \mathbb{N}\cup \{0\}$ and $ p \in \{1, \dots , q\}$. Then
\begin{equation*}
\sum_{k=n}^{m} f_{n}(\phi^{k}(x_{p})) \in [-(2n+1)S_{n}, b_{n}^{p}] \  ,
\end{equation*}
for $m\geq n$, and if $\sum_{k=n}^{m} f_{n}(\phi^{k}(x_{p}))$ is different from $b_{n}^{p}$ then $m$ is so big that
\begin{equation*}
\sum_{j=0}^{m}a_{j}^{p}+\sum_{j=0}^{n}(2j+1)S_{j} \ \leq \ \sum_{j=n+1}^{m} b_{j}^{p} \ . 
\end{equation*}
\end{lemma}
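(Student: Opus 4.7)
The plan is to exploit the telescoping structure of $f_n$. Write $f_n = \sum_{l=1}^q (g_n^l - g_n^l\circ\phi^{2n+1})$ and set $G:=\sum_{l=1}^q g_n^l$, so that $f_n(\phi^k(x_p)) = G(\phi^k(x_p)) - G(\phi^{k+2n+1}(x_p))$. Shifting the index in the second sum yields
\begin{equation*}
\sum_{k=n}^{m} f_n(\phi^k(x_p)) \ = \ \sum_{k=n}^{m} G(\phi^k(x_p)) \ - \ \sum_{k=3n+1}^{m+2n+1} G(\phi^k(x_p)) \ .
\end{equation*}
Remark \ref{rem1} immediately gives the pointwise bound $0 \leq G(y) \leq S_n$ for every $y \in X$, since at any point at most one of the $g_n^l$ is nonzero.

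The crux is to locate the support of $G$ along $\phi^{\mathbb Z}(x_p)$. If $G(\phi^k(x_p)) > 0$ then $\phi^k(x_p) \in V_n^l \subseteq \overline{W_n^l}$ for some $l$, so $(3_n)$ applies: either $\phi^k(x_p) \in \{\phi^n(x_l), \phi^{-n-1}(x_l)\}$ or $|k| > N_n$. The hypothesis that the $x_l$'s have disjoint nonperiodic orbits forces the first alternative to occur only at $(l,k)=(p,n)$ or $(l,k)=(p,-n-1)$. Restricting to $k \geq n$, this leaves $k = n$ (where $G(\phi^n(x_p)) = g_n^p(\phi^n(x_p)) = b_n^p$) and the range $k > N_n$; in particular $G(\phi^k(x_p)) = 0$ for all $n+1 \leq k \leq N_n$.

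Substituting this information into the telescoping identity, the contributions for indices $k \in [n+1, N_n]$ in both sums vanish, and the identity reduces to
\begin{equation*}
\sum_{k=n}^{m} f_n(\phi^k(x_p)) \ = \ b_n^p \ - \ \Sigma(m),
\end{equation*}
where $\Sigma(m)$ is a sum of at most $2n+1$ terms $G(\phi^k(x_p))$, each in $[0,S_n]$, indexed by $k$ lying in $[\max(N_n+1, m+1),\, m+2n+1]$. Hence $0 \leq \Sigma(m) \leq (2n+1)S_n$, and so the sum lies in $[b_n^p - (2n+1)S_n, \, b_n^p] \subseteq [-(2n+1)S_n, \, b_n^p]$, establishing the first claim.

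For the second assertion, if the sum differs from $b_n^p$ then $\Sigma(m) > 0$, forcing some index $k \in [m+1, m+2n+1]$ to satisfy $k > N_n$. Consequently $m$ is large enough to apply $(b_n)$ of Lemma \ref{lemteknisk} with $j = m+1$, producing
\begin{equation*}
\sum_{j=n+1}^{m} b_j^p \ \geq \ \sum_{j=0}^{m} a_j^p \ + \ \sum_{i=0}^{n}(2i+1)S_i \ ,
\end{equation*}
exactly the desired inequality. The main obstacle is the careful bookkeeping needed for the telescoping cancellation, which hinges entirely on the support-localization provided by $(3_n)$; once the bulk terms are eliminated the stated bounds follow from $0 \leq G \leq S_n$ and the choice of $N_n$ from Lemma \ref{lemteknisk}.
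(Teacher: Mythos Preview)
Your telescoping approach via $G = \sum_l g_n^l$ is correct for the first assertion and arguably cleaner than the paper's argument. The paper instead proves two pairing claims (each positive value $f_n(\phi^k(x_p))$ with $k>n$ cancels against $f_n(\phi^{k-2n-1}(x_p))$, and each negative one against $f_n(\phi^{k+2n+1}(x_p))$), then counts the at most $2n+1$ unmatched negative terms in the tail window $[m-2n,m]$. Your method yields the same upper bound $b_n^p$ and in fact the sharper lower bound $b_n^p-(2n+1)S_n$.

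There is, however, a gap in your proof of the second assertion. From $\Sigma(m)>0$ you extract an index $k\in[m+1,m+2n+1]$ with $k>N_n$ and then assert that $(b_n)$ applies with $j=m+1$. But all that follows is $m+2n+1\ge k>N_n$, i.e.\ $m>N_n-2n-1$, which for $n\ge1$ does not give $m+1\ge N_n$. The fix is one more application of $(3_n)$: if $G(\phi^k(x_p))>0$ with $k>N_n$ then $\phi^k(x_p)\in V_n^l$ for some $l$, so $\phi^{k-2n-1}(x_p)\in\phi^{-2n-1}(V_n^l)\subseteq\overline{W_n^l}$; since $k>N_n\ge 3n+3$ rules out the exceptional points, $(3_n)$ forces $k-2n-1>N_n$. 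Hence $m+2n+1\ge k>N_n+2n+1$, so $m>N_n$ and $(b_n)$ applies with $j=m+1$. Alternatively, you can argue as the paper does: if the sum differs from $b_n^p$ then $f_n(\phi^j(x_p))\neq 0$ for some $n<j\le m$, and since $\supp f_n\subseteq\bigcup_l W_n^l$, condition $(3_n)$ gives $j>N_n$ directly, hence $m>N_n$.
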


\begin{proof}
Fix a $p \in \{1, \dots , q\}$. Since $g_{n}^{p}(\phi^{n}(x_{p}))=b_{n}^{p}$ it follows from Remark \ref{rem1} that $f_{n}(\phi^{n}(x_{p}))=b_{n}^{p}$. 

\smallskip

\emph{Claim 1: If $k>n$ and $f_{n}(\phi^{k}(x_{p})) \ > \ 0$, then $k\geq 3n+3$ and
$$
f_{n}(\phi^{k}(x_{p}))+f_{n}(\phi^{k-2n-1}(x_{p})) \ = \ 0 \ .
$$}

\smallskip

To prove Claim 1 note that if $f_{n}(\phi^{k}(x_{p}))>0$ then there exists some $l\in \{1, \dots , q\}$ such that $\phi^{k}(x_{p})\in W_{n}^{l}$ and hence $k>N_{n}\geq 3n+3$, using ($3_n$). Since $f_{n}(\phi^{k}(x_{p}))>0$ implies $g_{n}^{l}(\phi^{k}(x_{p})) \neq 0$, it follows from Remark \ref{rem1} that $g_{n}^{r}(\phi^{k+2n+1}(x_{p})) = g_{n}^{r}(\phi^{k-2n-1}(x_{p})) =0$ for all $r$ while $ g_{n}^{r}(\phi^{k}(x_{p})) = 0$ for $r\neq l$. Thus
\begin{align*}
&f_{n}(\phi^{k}(x_{p}))+f_{n}(\phi^{k-2n-1}(x_{p})) \\
&= \sum_{r=1}^{q} \big(g_{n}^{r}(\phi^{k}(x_{p}))-g_{n}^{r}(\phi^{k+2n+1}(x_{p}))\big)+\sum_{r=1}^{q}\big(g_{n}^{r}(\phi^{k-2n-1}(x_{p})) - g_{n}^{r}(\phi^{k}(x_{p})) \big) \\
&= g_{n}^{l}(\phi^{k}(x_{p}))-g_{n}^{l}(\phi^{k}(x_{p}))=0 \ .
\end{align*}

\smallskip

\emph{Claim 2: If $k>n$ and $f_{n}(\phi^{k}(x_{p}))<0$ then:
\begin{equation*}
f_{n}(\phi^{k}(x_{p}))+f_{n}(\phi^{k+2n+1}(x_{p}))=0 \ .
\end{equation*}}

\smallskip

To prove Claim 2 note that if $f_{n}(\phi^{k}(x_{p}))<0$ then there exists some $l\in \{1, \dots , q\}$ such that $\phi^{k}(x_{p})\in W_{n}^{l}$, and in particular $g_{n}^{l}(\phi^{k+2n+1}(x_{p}))\neq 0$, so Remark \ref{rem1} implies that $g_{n}^{r}(\phi^{k}(x_{p}))=0$ and $g_{n}^{r}(\phi^{4n+2}(\phi^{k}(x_{p})))=0$ for all $r$ and that $g_{n}^{r}(\phi^{k+2n+1}(x_{p}))= 0$ for all $r \neq l$. Hence 
\begin{align*}
&f_{n}(\phi^{k}(x_{p}))+f_{n}(\phi^{k+2n+1}(x_{p})) \\
&= \ \sum_{r=1}^{q} \big(g_{n}^{r}(\phi^{k}(x_{p}))-g_{n}^{r}(\phi^{k+2n+1}(x_{p}))\big) \\
& \ \ \ \ \ \ \ \ \ \ \ \ \ \ + \ \sum_{r=1}^{q}\big(g_{n}^{r}(\phi^{k+2n+1}(x_{p})) - g_{n}^{r}(\phi^{k+4n+2}(x_{p})) \big) \\
&= \ -g_{n}^{l}(\phi^{k+2n+1}(x_{p}))+g_{n}^{l}(\phi^{k+2n+1}(x_{p}))=0 \ .
\end{align*}

\smallskip

Using the two claims we can complete the proof as follows. Let $A_+=\{ n+1\leq k \leq m : \   f_{n}(\phi^{k}(x_{p}))>0\}$ and $A_-=\{ n+1\leq k \leq m : \  f_{n}(\phi^{k}(x_{p}))<0\}$. Then
\begin{equation*}
\sum_{k=n+1}^{m} f_{n}(\phi^{k}(x_{p})) = \sum_{k\in A_+} f_{n}(\phi^{k}(x_{p}))+\sum_{k\in A_-} f_{n}(\phi^{k}(x_{p})) \ .
\end{equation*}
By the first claim the map $\eta(k) = k-2n-1$ takes $A_+$ into $A_-$ while the second claim implies that $A_- \cap[0,m-2n-1] \subseteq \eta(A_+)$. It follows therefore from Claim 1 that
\begin{equation*}
 \sum_{k\in A_+} f_{n}(\phi^{k}(x_{p}))+\sum_{k\in A_-} f_{n}(\phi^{k}(x_{p})) \ = \ \sum_{k\in A_-\setminus \eta(A_+)} f_{n}(\phi^{k}(x_{p})) \ .
\end{equation*}
Since $A_-\setminus \eta(A_+) \subseteq A_-\cap [m-2n, m]$ and since $-S_{n} \leq f_{n}(\phi^{k}(x_{p}))\leq 0$ for $k\in A_-$ we find that 
\begin{equation*}
\sum_{k=n}^{m} f_{n}(\phi^{k}(x_{p})) \ = \ b_{n}^{p} \ +\sum_{k\in A_-\setminus \eta(A_+)} f_{n}(\phi^{k}(x_{p}))\ \in \ [-(2n+1)S_{n}, b_{n}^{p}] \ .
\end{equation*}
For the last statement in the lemma let $j> n$ be the smallest number with $f_{n}(\phi^{j}(x_{p}))\neq 0$. If $m<j$ then $\sum_{k=n}^{m} f_{n}(\phi^{k}(x_{p}))=b_{n}^{p}$, so we must have $m\geq j$. But $f_{n}(\phi^{j}(x_{p}))\neq 0$ implies $\phi^{j}(x_{p})\in W_{n}^{l}$ for some $l\in \{1, \dots , q\}$ and hence $j > N_{n}$, using ($3_n$). The stated inequality follows therefore from $(b_{n})$ in Lemma \ref{lemteknisk}.
\end{proof}

The proof of the following Lemma is very similar to that of Lemma \ref{lulig1}, and we leave it to the reader.

\begin{lemma} \label{lulig2}
Let $n\in \mathbb{N}\cup \{0\}$ and $p\in \{1, \dots , q\}$. For any $m\geq n$ then:
\begin{equation*}
\sum_{k=n+1}^{m} f_{n}(\phi^{-k}(x_{p})) \in [-b_{n}^{p}, (2n+1)S_{n}],
\end{equation*}
and if $\sum_{k=n+1}^{m} f_{n}(\phi^{-k}(x_{p})) \neq  -b_{n}^{p}$ then $m$ is so big that
\begin{equation*}
 \sum_{j=0}^{m-1} a_{j}^{p} +
\sum_{j=0}^{n}(2j+1)S_{j} \ \leq \ \sum_{j=n+1}^{m-1} b_{j}^{p} \ .
\end{equation*}
\end{lemma}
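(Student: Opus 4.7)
The plan is to mirror the proof of Lemma \ref{lulig1} with the roles of positive and negative contributions interchanged, since we are iterating $\phi$ backwards. Under $\phi^{-1}$ the function $f_n^p = g_n^p - g_n^p\circ\phi^{2n+1}$ hits its negative extremum $-b_n^p$ at $\phi^{-n-1}(x_p)$ before reaching any positive contribution, so the first term $k=n+1$ of $\sum_{k=n+1}^m f_n(\phi^{-k}(x_p))$ will equal $-b_n^p$ and will play the role that the term $k=n$ plays in Lemma \ref{lulig1}. I would verify this directly: by $(1_n)$ we have $\phi^{-n-1}(x_p)\in\phi^{-2n-1}(V_n^p)$, which is disjoint from $V_n^p$, and by the construction $\phi^{-n-1}(x_p)\in M_n^r$ for every $r\neq p$, so $g_n^r(\phi^{-n-1}(x_p)) = 0$ for all $r$; while $\phi^{-(n+1)+(2n+1)}(x_p)=\phi^n(x_p)$ gives $g_n^p(\phi^n(x_p)) = b_n^p$ and $g_n^r(\phi^n(x_p)) = 0$ for $r\neq p$.

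Next I would establish the two cancellation claims analogous to Claims 1 and 2 of Lemma \ref{lulig1}: for $k>n+1$, a negative value $f_n(\phi^{-k}(x_p))<0$ forces $\phi^{2n+1-k}(x_p)\in V_n^r\subseteq W_n^r$ for some $r$; after checking that neither $\phi^n(x_r)$ nor $\phi^{-n-1}(x_r)$ can coincide with $\phi^{2n+1-k}(x_p)$ when $k>n+1$, condition $(3_n)$ yields $|2n+1-k|>N_n$ and hence $k>N_n$, while Remark \ref{rem1} gives $f_n(\phi^{-k}(x_p)) + f_n(\phi^{-(k-2n-1)}(x_p)) = 0$. Symmetrically, a positive value at $k$ forces $\phi^{-k}(x_p)\in V_n^r$, hence $k>N_n$, and $f_n(\phi^{-k}(x_p)) + f_n(\phi^{-(k+2n+1)}(x_p)) = 0$. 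With $A_\pm=\{k\in[n+2,m]:\pm f_n(\phi^{-k}(x_p))>0\}$, the map $\eta(k)=k-2n-1$ then injects $A_-$ into $A_+$ with cancelling images, while the positive claim shows $A_+\cap[n+2,m-2n-1]\subseteq\eta(A_-)$. The unpaired remainder $A_+\setminus\eta(A_-)$ therefore lies in $[m-2n,m]$, has at most $2n+1$ elements, and contributes a non-negative value bounded above by $(2n+1)S_n$ using \eqref{Remark 82}. This gives $\sum_{k=n+1}^m f_n(\phi^{-k}(x_p))\in[-b_n^p,\,-b_n^p+(2n+1)S_n]\subseteq[-b_n^p,(2n+1)S_n]$, which is the first claim.

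For the second claim, if the sum is not equal to $-b_n^p$ then some $k_0\in[n+2,m]$ has $f_n(\phi^{-k_0}(x_p))\neq 0$; the arguments above force $k_0>N_n$, so $m\geq N_n$, and condition $(b_n)$ of Lemma \ref{lemteknisk} applied with $j=m$ yields the stated inequality. The main obstacle is purely bookkeeping: one must track that the $\phi^{-1}$-sign-flip places the unique non-cancelling extremal term at $k=n+1$ rather than $k=n$ as in Lemma \ref{lulig1}, so that the resulting interval is the asymmetric $[-b_n^p,(2n+1)S_n]$, and one must verify that the exceptional orbit points $\phi^n(x_r),\phi^{-n-1}(x_r)$ in $(3_n)$ never obstruct the inequality $|2n+1-k|>N_n$ when $k>n+1$; once these are pinned down the cancellation argument of Lemma \ref{lulig1} runs identically.
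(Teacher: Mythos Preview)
Your proposal is correct and is precisely the approach the paper has in mind: the paper's proof of Lemma~\ref{lulig2} simply states that it is analogous to that of Lemma~\ref{lulig1} and leaves the details to the reader, and your sketch supplies exactly those details with the sign reversal and the index shift $n\mapsto n+1$ carried out. The only minor imprecision is the claim that $\phi^{-n-1}(x_r)$ can never equal $\phi^{2n+1-k}(x_p)$ for $k>n+1$: when $r=p$ and $k=3n+2$ they do coincide, but this case is excluded anyway since $\phi^{-n-1}(x_p)\notin V_n^p$ by $(1_n)$, so the application of $(3_n)$ still goes through.
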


\begin{lemma}\label{p33} For $p\in \{1, \dots , q\}$ and $m\in \mathbb{N}$,
\begin{equation} \label{epos}
\sum_{i=0}^{m} a_{i}^{p}\ \leq \ \sum_{i=0}^{m} F(\phi^{i}(x_{p})) \ \leq \  \sum_{i=0}^{m} b_{i}^{p} 
\end{equation}
and
\begin{equation}\label{eneg}
-\sum_{i=0}^{m-1} b_{i}^{p} \ \leq \  \sum_{i=1}^{m} F(\phi^{-i}(x_{p})) \ \leq \ -\sum_{i=0}^{m-1 }a_{i}^{p} \ .
\end{equation}
\end{lemma}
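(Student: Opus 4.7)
The plan is to expand $F=\sum_{n=0}^{\infty}f_{n}$, interchange sums, and apply Lemmas~\ref{lulig1} and \ref{lulig2} term by term. The crucial preliminary step — really the only nontrivial ingredient — is a \emph{vanishing lemma} that I would establish first: for $0\le i\le m$, $f_{n}(\phi^{i}(x_{p}))=0$ unless $n\le m$ and $i\ge n$. To see this, note that $f_{n}(y)\neq 0$ forces $y\in W_{n}^{l}$ for some $l$, and orbit-disjointness forces $l=p$; condition $(3_{n})$ then restricts to $\phi^{i}(x_{p})\in\{\phi^{n}(x_{p}),\phi^{-n-1}(x_{p})\}$ or $|i|>N_{n}\ge 3(n+1)$. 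Since $i\ge 0$, the middle case is excluded and we land on $i=n$ or $i>N_{n}>n$; either way $i\ge n$ and, since $i\le m$, also $n\le m$. The symmetric statement for the backwards orbit, proved the same way, reads: for $1\le i\le m$, $f_{n}(\phi^{-i}(x_{p}))=0$ unless $n\le m-1$ and $i\ge n+1$.

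Once this is in place I would set $T_{n}:=\sum_{i=n}^{m}f_{n}(\phi^{i}(x_{p}))$, so Fubini gives $\sum_{i=0}^{m}F(\phi^{i}(x_{p}))=\sum_{n=0}^{m}T_{n}$; Lemma~\ref{lulig1} bounds each term by $-(2n+1)S_{n}\le T_{n}\le b_{n}^{p}$. The upper bound in \eqref{epos} is immediate. For the lower bound I would treat the trivial case $T_{n}=b_{n}^{p}$ for all $n\le m$ separately (it follows from $a_{n}^{p}\le b_{n}^{p}$), and otherwise define $n^{*}:=\max\{n\le m:T_{n}<b_{n}^{p}\}$. Then $T_{n}=b_{n}^{p}$ for $n>n^{*}$, and the second conclusion of Lemma~\ref{lulig1} applied at $n^{*}$ reads
\[
\sum_{j=n^{*}+1}^{m}b_{j}^{p}\ \ge\ \sum_{j=0}^{m}a_{j}^{p}+\sum_{j=0}^{n^{*}}(2j+1)S_{j}.
\]
Combining this with $T_{n}\ge-(2n+1)S_{n}$ for $n\le n^{*}$, the $(2j+1)S_{j}$ terms cancel exactly and produce $\sum_{n=0}^{m}T_{n}\ge\sum_{n=0}^{m}a_{n}^{p}$, as required.

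The argument for \eqref{eneg} is entirely symmetric: set $T'_{n}:=\sum_{i=n+1}^{m}f_{n}(\phi^{-i}(x_{p}))$, reduce $\sum_{i=1}^{m}F(\phi^{-i}(x_{p}))$ to $\sum_{n=0}^{m-1}T'_{n}$ via the second half of the vanishing lemma, and apply Lemma~\ref{lulig2} to obtain $-b_{n}^{p}\le T'_{n}\le(2n+1)S_{n}$. The lower bound in \eqref{eneg} is then immediate; the upper bound follows by the same bookkeeping as above with $n^{**}:=\max\{n\le m-1:T'_{n}>-b_{n}^{p}\}$ and the second conclusion of Lemma~\ref{lulig2}. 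I expect the main (and essentially only) obstacle to be the vanishing lemma — this is where the careful construction conditions $(1_{n})$–$(4_{n})$ pull their weight — after which the rest is an almost mechanical telescoping.
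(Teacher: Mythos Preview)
Your approach is essentially identical to the paper's: expand $F=\sum_n f_n$, use the vanishing $f_j(\phi^i(x_p))=0$ for $0\le i<j$ to rewrite the double sum, then apply Lemmas~\ref{lulig1} and \ref{lulig2} with the same maximal-index trick for the nontrivial bounds. One small correction to your vanishing argument: orbit-disjointness does \emph{not} force $l=p$ (the open set $W_n^l$ can certainly contain points of other orbits); rather, condition $(3_n)$ applies for every $l$, and orbit-disjointness is what rules out $\phi^i(x_p)\in\{\phi^n(x_l),\phi^{-n-1}(x_l)\}$ when $l\neq p$, so in that case $|i|>N_n$ directly --- the conclusion $i\ge n$ follows either way.
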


\begin{proof}
Fix a $p\in \{1, \dots, q\}$. For the first statement, notice that $f_{j}(\phi^{i}(x_{p}))=0$ when $0\leq i<j$ by construction. This implies that
\begin{align*}
&\sum_{i=0}^{m} F(\phi^{i}(x_{p}))=
\sum_{i=0}^{m} \sum_{j=0}^{i} f_{j}(\phi^{i}(x_{p})) \\
&=\sum_{j=0}^{m}f_{0}(\phi^{j}(x_{p}))+ \sum_{j=1}^{m}f_{1}(\phi^{j}(x_{p}))+\cdots+\sum_{j=m}^{m}f_{m}(\phi^{j}(x_{p})) \ .
\end{align*}
The upper bound in \eqref{epos} now follows directly from Lemma \ref{lulig1}. If $\sum_{j=i}^{m}f_{i}(\phi^{j}(x_{p}))=b_{i}^{p}$ for each $0\leq i \leq m$ the lower bound is trivial, so assume this is not the case and choose $0\leq n \leq m$ largest such that $\sum_{j=n}^{m}f_{n}(\phi^{j}(x_{p}))\neq b_{n}^{p}$. It now follows from Lemma \ref{lulig1} that that $m$ is so big that
\begin{equation*}
\sum_{j=n+1}^{m} b_{j}^{p} \geq \sum_{j=0}^{m} a_{j}^{p}+\sum_{j=0}^{n} (2j+1) S_{j}, 
\end{equation*}
and since $n$ was chosen largest then Lemma \ref{lulig1} gives
\begin{align*}
&\sum_{i=0}^{m} F(\phi^{i}(x_{p}))=\sum_{k=0}^{m}\sum_{j=k}^{m}f_{k}(\phi^{j}(x_{p}))
=\sum_{k=0}^{n}\sum_{j=k}^{m}f_{k}(\phi^{j}(x_{p}))+\sum_{k=n+1}^{m}b_{k}^{p} \\
& \geq \ \sum_{k=0}^{n} -(2k+1) S_{k}+\sum_{j=0}^{m} a_{j}^{p}+\sum_{j=0}^{n} (2j+1) S_{j} \ = \ 
\sum_{j=0}^{m} a_{j}^{p} \ .
\end{align*}
To prove \eqref{eneg} note that $f_{j}(\phi^{-i}(x_{p}))=0$ when $j\geq i$ so that 
\begin{align*}
&\sum_{i=1}^{m} F(\phi^{-i}(x_{p}))=
\sum_{i=1}^{m} \sum_{j=0}^{i-1} f_{j}(\phi^{-i}(x_{p})) \\
&=\sum_{j=1}^{m}f_{0}(\phi^{-j}(x_{p}))+ \sum_{j=2}^{m}f_{1}(\phi^{-j}(x_{p}))+ \ \cdots \ +\sum_{j=m}^{m}f_{m-1}(\phi^{-j}(x_{p})) \ .
\end{align*}
This yields the lower bound by Lemma \ref{lulig2}, and to obtain the upper bound assume $0\leq n \leq m-1$ is chosen largest with $\sum_{k=n+1}^{m}f_{n}(\phi^{-k}(x_{p}))\neq -b_{n}^{p}$; if such a $n$ does not exist the stated upper bound is trivially true. Using Lemma \ref{lulig2} we get
\begin{align*}
&\sum_{i=1}^{m} F(\phi^{-i}(x_{p}))=
\sum_{k=0}^{m-1} \sum_{j=k+1}^{m} f_{k}(\phi^{-j}(x_{p}))
=\sum_{k=0}^{n} \sum_{j=k+1}^{m} f_{k}(\phi^{-j}(x_{p}))-\sum_{k=n+1}^{m-1} b_{k}^{p} \\
&\leq \ \sum_{k=0}^{n} (2k+1)S_{k} - \sum_{j=0}^{m-1} a_{j}^{p}- \sum_{j=0}^{n} (2j+1) S_{j}   \ = \   - \sum_{j=0}^{m-1} a_{j}^{p} \ .
\end{align*}
\end{proof}

We can now complete the proof of Theorem \ref{thm73}: Partition $\{1, \dots, q\}=\mathcal{C} \sqcup \mathcal{O}$ such that $I_{p}=\ ]-\infty, \beta_{p}]$ when $p\in \mathcal{C}$ and $I_{p}= \ ]-\infty, \beta_{p}[$ when $p\in \mathcal{O}$. Choose strictly decreasing sequences $\{c_n\}_{n=0}^{\infty}$ and $\{d_n\}_{n=0}^{\infty}$ of positive numbers with $c_{n}/c_{n+1}\to 1$, $c_{0}=1$ and such that $\sum_{n=0}^{\infty} c_n^s < \infty$ when $s \geq 1$, $\sum_{n=0}^{\infty} c_n^s = \infty$ when $s < 1$, $\sum_{n=0}^{\infty} d_n^s < \infty$ when $s > 1$, $\sum_{n=0}^{\infty} d_n^s = \infty$ when $s \leq 1$. Consider a $p \in \mathcal C$. Set
$$
a^p_n \ = \ \frac{\log(c_{n+1}) - \log(c_n)}{\beta_p} \ .
$$
Find a non-increasing sequence $\{t_{n}^{p}\}_{n=0}^{\infty}\ \subseteq  \ ]1,2[$ such that $\lim_{n \to \infty} t_{n}^{p} =1$ and $b_{n}^{p}:=t_{n}^{p}a_{n}^{p}$ satisfies the criteria (1), (2) and (3) from the beginning of this Appendix. When $\beta \geq 0$ it follows from the first inequality of\eqref{epos} that the sum from 3) in Lemma \ref{25-10-19} is divergent and hence that there is no $e^{\beta F}$-conformal measure concentrated on the orbit of $x_p$. Let $\beta \ \in \ ]\beta_{p}, 0[$. Choose $\beta/\beta_{p}<s<1$. By the choice of $\{t_{n}^{p}\}$ there is a $N\in \mathbb{N}$ such that
\begin{equation*}
s\sum_{i=0}^{k-1} b_{i}^{p} \ \leq  \ \sum_{i=0}^{k-1} a_{i}^{p}
\end{equation*}
for  all $k\geq N$. Using \eqref{epos} we find that
\begin{equation*}
\exp\big(\beta \sum_{i=0}^{k-1}F(\phi^{i}(x_{p}))\big) \  
\geq \ \exp\big(\beta \sum_{i=0}^{k-1} b_{i}^{p}   \big)
\ \geq \  \exp\big( s^{-1}\beta \sum_{i=0}^{k-1} a_{i}^{p}   \big) \ 
= \ c_{k}^{\frac{s^{-1}\beta}{\beta_{p}}} \ 
\end{equation*}
for $k\geq N$. Since $\frac{s^{-1}\beta}{\beta_{p}} <1$ it follows again from Lemma \ref{25-10-19} and the properties of $\{c_n\}$ that there is no $e^{\beta F}$-conformal measure concentrated on $x_{p}$. When $\beta \leq \beta_p$ we find that
\begin{equation*}
\exp\big(\beta \sum_{i=0}^{k-1}F(\phi^{i}(x_{p}))\big) 
\leq \exp\big(\beta \sum_{i=0}^{k-1} a_{i}^{p}   \big)
=  c_{k}^{\beta/\beta_{p}}
\end{equation*} 
and 
\begin{equation*}
\exp\big(-\beta \sum_{i=1}^{k}F(\phi^{-i}(x_{p}))\big) 
\leq \exp\big(\beta \sum_{i=0}^{k-1} a_{i}^{p}   \big)
=  c_{k}^{\beta/\beta_{p}} \ ,
\end{equation*} 
and now 3) of Lemma \ref{25-10-19} implies that there is a $e^{\beta F}$-conformal measure concentrated on the orbit of $x_{p}$. The elements of $\mathcal O$ are handled in the same way, using the sequence $\{d_n\}$ instead.

\qed

\section{Appendix B: Proof of Proposition \ref{prop74}}\label{AppB}

We modify the construction used to prove Theorem 3 in \cite{BMM}. Throughout the appendix $\alpha$ will be an arbitrary but fixed irrational number in $[0,1]$. We start by recalling Dirichlet's theorem for approximation of irrational numbers.

\begin{thm}\label{Dir} (Dirichlet)
The inequality
\begin{equation*}
\big\lvert \alpha - \frac{p}{q} \big\rvert \leq \frac{1}{q^{2}}
\end{equation*}
is satisfied for infinitely many integers $p$ and $q$. 
\end{thm}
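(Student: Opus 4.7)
The plan is to prove Dirichlet's theorem by the classical pigeonhole argument on fractional parts, and then use irrationality of $\alpha$ to promote a single approximation to infinitely many.

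First, fix $N \in \mathbb{N}$ and consider the $N+1$ fractional parts $\{k\alpha\} = k\alpha - \lfloor k\alpha\rfloor$ for $k = 0, 1, 2, \dots, N$. These all lie in $[0,1)$. Partition $[0,1)$ into the $N$ half-open intervals $\left[\tfrac{j-1}{N}, \tfrac{j}{N}\right)$ for $j = 1, \dots, N$. By the pigeonhole principle, two of the $N+1$ fractional parts lie in the same subinterval, so there exist $0 \leq k_1 < k_2 \leq N$ with
\begin{equation*}
\bigl| \{k_2\alpha\} - \{k_1\alpha\} \bigr| \ < \ \frac{1}{N} \ .
\end{equation*}
Setting $q = k_2 - k_1$ and $p = \lfloor k_2\alpha\rfloor - \lfloor k_1\alpha\rfloor$, this says $|q\alpha - p| < 1/N$, and since $1 \leq q \leq N$ we obtain
\begin{equation*}
\Bigl| \alpha - \frac{p}{q} \Bigr| \ < \ \frac{1}{qN} \ \leq \ \frac{1}{q^2} \ .
\end{equation*}

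To produce infinitely many such pairs, I would iterate, using that $\alpha$ is irrational. Given any finite collection $(p_1, q_1), \dots, (p_n, q_n)$ of pairs satisfying the inequality, the numbers $|\alpha - p_i/q_i|$ are all strictly positive (since $\alpha \notin \mathbb{Q}$), so I may choose $N' \in \mathbb{N}$ so large that $1/N' < \min_{1\leq i \leq n} |\alpha - p_i/q_i|$. Running the pigeonhole construction with this $N'$ produces a new pair $(p, q)$ with $|\alpha - p/q| < 1/(qN') \leq 1/N'$, and this pair cannot coincide with any of the previously chosen $(p_i, q_i)$ by the choice of $N'$. This yields infinitely many solutions.

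There is no real obstacle here; the only subtlety worth highlighting is that irrationality of $\alpha$ is used precisely once, in the iteration step, to guarantee $|\alpha - p_i/q_i| > 0$ so that the threshold $1/N'$ can be made strictly smaller than every previously attained approximation error.
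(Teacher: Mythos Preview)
Your proof is correct and is the classical pigeonhole argument for Dirichlet's theorem. The paper does not actually prove this statement: it is merely recalled as a classical fact (attributed to Dirichlet) and then used as a tool in the construction of Appendix~B, so there is nothing to compare against.
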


When $r \geq 0$ we use the notation $[r]$ for the unique number in $[0,1[$ with $r-[r]\in \mathbb{N}$. We choose first a rational number $p_{1}/q_{1} \in \mathbb{Q}$ such that
\begin{equation*}
\big\lvert \alpha - \frac{p_{1}}{q_{1}} \big\rvert \ \leq \  \frac{1}{1^{3} 2^{2+1}q_{1}} \ .
\end{equation*}
This is possible by Theorem \ref{Dir}; just take $q_{1} \geq 1^{3} 2^{2+1}$. Now define
\begin{equation*}
u_{1}(x)=
\begin{cases}
2^{1+1}+1\cdot 2^{2+1}q_{1} x & \text{ if } x\in \big[-\frac{1}{1\cdot2^{1} q_{1}} , 0 \big ] \  , \\ 
2^{1+1}-1\cdot 2^{2+1}q_{1} x & \text{ if } x\in \big[0, \frac{1}{1\cdot2^{1} q_{1}} \big ] \ , \\
 0 \ , \ \text{otherwise} \ ,
\end{cases}
\end{equation*}
and consider the sawtooth function $\omega_{1}$ defined 
\begin{equation*}
\omega_{1} (x) : = \sum_{p=0}^{q_{1}} u_{1}(x-p/q_{1})
\end{equation*}
when $x \in [0,1]$. Since $\omega_1(0) = \omega_1(1)$ we can and will consider $\omega_1$ as a $1$-periodic function on $\mathbb R$. Then $\omega_1$ is in fact $\frac{1}{q_1}$-periodic. Since $\omega_{1}$ is a uniformly continuous function there exists a number $0 < \varepsilon_{1} < 1$ such that if $\lvert x-y\rvert \leq \varepsilon_{1} $ then $\lvert \omega_{1}(x)-\omega_{1}(y)\rvert \leq 1/1^{2}$. Now choose a integer $n_{2} \in \mathbb{N}$ with $n_{2} \geq 2$ such that $[n_{2} \alpha]\leq \varepsilon_{1}$; this is possible because of minimality of rotation by $\alpha$ since $(0, \varepsilon_{1})$ is open. Choose a rational number $p_{2}/q_{2}$ such that
\begin{equation}\label{e1}
\big\lvert \alpha - \frac{p_{2}}{q_{2}} \big\rvert \ \leq \  \frac{1}{2^{3} 2^{2\cdot 2+1} n_{2} q_{2}}=\frac{1}{2^{3} 2^{5} n_{2} q_{2}} \ ,
\end{equation}
which can be done by taking $q_{2} \ \geq  \ 2^{3} 2^{5} n_{2}$ in Theorem \ref{Dir}. Then
\begin{equation*}
\big\lvert \alpha - \frac{p_{2}}{q_{2}} \big\rvert \ \leq \  \frac{1}{2^{3} 2^{2\cdot 2+1} q_{2}}
\end{equation*}
since $n_2 \geq 1$, and it follows from \eqref{e1} that
\begin{equation*}
\big\lvert \alpha n_{2} - \frac{p_{2} n_{2}}{q_{2}} \big\rvert \ \leq \ \frac{1}{2^{3} 2^{2\cdot 2+1} q_{2}} \ \Rightarrow \  \big\lvert [\alpha n_{2}] - \frac{r(2)_{2}}{q_{2}} \big\rvert \ \leq \ \frac{1}{2^{3} 2^{2\cdot 2+1} q_{2}}
\end{equation*}
where $r(2)_{2}\in \mathbb{N}\cup\{0\}$ with $0\leq r(2)_{2} <q_{2}$. Now define
\begin{equation*}
u_{2}(x)=
\begin{cases}
2^{2+1}+2\cdot 2^{4+1}q_{2} x & \text{ if } x\in \big[-\frac{1}{2\cdot2^{2} q_{2}} , 0 \big ] \  , \\ 
2^{2+1}-2\cdot 2^{4+1}q_{2} x & \text{ if } x\in \big[0, \frac{1}{2\cdot2^{2} q_{2}} \big ]   \ , \\
0 \ , \ \text{otherwise} \ ,
\end{cases}
\end{equation*}
and the corresponding sawtooth function $\omega_{2}$ by
\begin{equation*}
\omega_{2} (x) : = \sum_{p=0}^{q_{2}} u_{2}(x-p/q_{2}) 
\end{equation*}
when $x \in [0,1]$, and extend it by periodicity to a $\frac{1}{q_2}$-periodic function on $\mathbb R$. Since $\omega_{2}$ is uniformly continuous there exists a number $0< \varepsilon_{2} \leq \varepsilon_{1}$ such that if $\lvert x-y\rvert \leq \varepsilon_{2} $ then $\lvert \omega_{2}(x)-\omega_{2}(y)\rvert \leq 1/2^{2}$. Now choose an integer $n_{3} \in \mathbb{N}$ with $n_{3} >n_{2}$ such that $[n_{3} \alpha]\leq \varepsilon_{2}$. As before Theorem \ref{Dir} gives a rational number $p_{3}/q_{3}$ such that
\begin{equation*}
\big\lvert \alpha - \frac{p_{3}}{q_{3}} \big\rvert \leq \frac{1}{3^{3} 2^{2\cdot 3+1} n_{3}n_{2} q_{3}}=\frac{1}{3^{3} 2^{7} n_{3}n_{2} q_{3}} \ ,
\end{equation*}
which implies that
\begin{equation*}
\big\lvert \alpha - \frac{p_{3}}{q_{3}} \big\rvert \ \leq \  \frac{1}{3^{3} 2^{2\cdot 3+1} q_{3}} \ .
\end{equation*}
Arguing as before there exists $0\leq r(3)_{2}, r(3)_{3} \leq q_{3}$ such that
\begin{equation*}
\big\lvert [\alpha n_{2}] - \frac{r(3)_{2}}{q_{3}} \big\rvert \ \leq \ \frac{1}{3^{3} 2^{2\cdot 3+1} q_{3}}
\end{equation*}
and
\begin{equation*}
\big\lvert [\alpha n_{3}] - \frac{r(3)_{3}}{q_{3}} \big\rvert \ \leq \ \frac{1}{3^{3} 2^{2\cdot 3+1} q_{3}} \ .
\end{equation*}
Now define
\begin{equation*}
u_{3}(x)=
\begin{cases}
2^{3+1}+3\cdot 2^{2\cdot 3+1}q_{3} x & \text{ if } x\in \big[-\frac{1}{3\cdot 2^{3} q_{3}} , 0 \big ] \  , \\ 
2^{3+1}-3\cdot 2^{2\cdot 3+1}q_{3} x & \text{ if } x\in \big[0, \frac{1}{3\cdot 2^{3} q_{3}} \big ]  \ , \\
0 \ , &  \ \text{otherwise} \ ,
\end{cases}
\end{equation*}
and then the sawtooth function $\omega_{3}$ as
\begin{equation*}
\omega_{3} (x) : = \sum_{p=0}^{q_{3}} u_{3}(x-p/q_{3})\ 
\end{equation*}
when $x \in [0,1]$ and extend to get a $\frac{1}{q_3}$-periodic function on $\mathbb R$. Since $\omega_{3}$ is uniformly continuous there exists a number $0< \varepsilon_{3} \leq \varepsilon_{2}$ such that if $\lvert x-y\rvert \leq \varepsilon_{3} $ then $\lvert \omega_{3}(x)-\omega_{3}(y)\rvert \leq 1/3^{2}$. Choose a integer $n_{4} \in \mathbb{N}$ with $n_{4}>n_{3}$ such that $[n_{4} \alpha]\leq \varepsilon_{3}$, etc.

\smallskip

We continue like this and get a sequence of continuous functions $\{\omega_{k}\}_{k=1}^{\infty}$ on $\mathbb R$, a sequence of rational numbers $\{p_{k}/q_{k}\}_{k=1}^{\infty}$, an increasing sequence of integers $\{n_{k}\}_{k=1}^{\infty}$ with $n_1 = 1$ and a decreasing sequence of positive real numbers $\{\varepsilon_{k}\}_{k=1}^{\infty}$ with $\varepsilon_1 <1$ such that for each $k\in \mathbb{N}$:
\begin{enumerate}
\item[a)] There are numbers $0\leq r(k)_{2},r(k)_{3}, \dots, r(k)_{k} \leq q_{k}$ such that
\begin{itemize}
\item $\big\lvert \alpha - \frac{p_{k}}{q_{k}} \big \rvert \  \leq \ \frac{1}{k^{3} 2^{2k+1} q_{k}}$ and  
\item $\big\lvert [n_{i}\alpha] - \frac{r(k)_{i}}{q_{k}} \big \rvert \ \leq \ \frac{1}{k^{3} 2^{2k+1} q_{k}}$
\end{itemize}
for $i=2, \dots , k$,
\item[b)] $\omega_{k}$ is non-negative with 
\begin{equation*}
\int_0^1 \omega_{k}(x) \ \mathrm{d}x \ = \ 2/k \ ,
\end{equation*}
\item[c)] $\omega_{k}(x+\frac{1}{q_k}) = \omega_k(x)$ for $x \in \mathbb R$ \ ,
\item[d)]  the support of $\omega_{k}$ in $[0,1]$ has Lebesgue measure $q_{k} \frac{2}{k2^{k} q_{k}}$ \ ,
\item[e)] $\lvert \omega_{k}(x)-\omega_{k}(y) \lvert  \ \leq \ k 2^{2k+1} q_k \left|x-y\right|$ for all $x,y \in \mathbb R$ \ , 
\item[f)] $\lvert \omega_{k}(x)-\omega_{k}(y) \lvert  \ \leq \ 1/k^{2}$ when $\lvert x-y\rvert \leq \varepsilon_{k}$ \ ,
\item[g)] $[n_{k+1}\alpha] \leq \varepsilon_{k}$ \ .
\end{enumerate}
The first condition in a) and b),c),d) and e) are all true for the construction in \cite{BMM} and have the following consequences, cf. \cite{BMM}:
\begin{itemize}
\item The sum $f(x) =\sum_{k=1}^{\infty} \omega_{k}(x)-\omega_{k}(x+\alpha)$ converges uniformly to a $1$-periodic continuous function on $\mathbb R$.
\item The sum $\omega(x) = \sum_{k=1}^{\infty} \omega_k(x)$ converges for Lebesgue almost all $x \in [0,1]$.
 \item $\int_0^1 \omega(x) \ \mathrm{d}x \ = \ \infty .$
\end{itemize}
We have added the second condition in a) and f) and g) in order to conclude
\begin{itemize}
\item $\sum_{l=1}^{\infty} \left|\omega_l(x) - \omega_l(x + n_k\alpha) \right| \leq 2$
\end{itemize}
for all $k > 1$ and all $x \in \mathbb R$. To see that this is true note that because the $\{\varepsilon_{n}\}$-sequence is decreasing it follows from g) that
\begin{equation*}
[n_{k}\alpha] \leq \varepsilon_{k-1} \leq \varepsilon_{l}
\end{equation*}
when $l < k$. Using c) and f) we find that
\begin{equation*}
\lvert \omega_{l}(x) - \omega_{l}(x+n_{k}\alpha) \rvert\leq \frac{1}{l^{2}}
\end{equation*}
for all $x \in \mathbb R$.
For $l\geq k$ we get from a) that 
\begin{equation*}
\big \lvert [n_{k}\alpha] - \frac{r(l)_{k}}{q_{l}}  \big\rvert\ \leq \ \frac{1}{l^{3}2^{2l+1}q_{l}}
\end{equation*}
and then from c), e) and a) that
\begin{equation*}
\begin{split}
&\lvert \omega_{l}(x) - \omega_{l}(x+n_{k}\alpha) \rvert \ = \ 
 \lvert \omega_{l}(x+\frac{r(l)_{k}}{q_{l}}) - \omega_{l}(x+n_{k}\alpha) \rvert \\
 &\leq
 \ l2^{2l+1} q_{l} \lvert [n_{k}\alpha]-   r(l)_{k}/q_{l} \rvert \ \leq \ \frac{1}{l^{2}}
\end{split}
\end{equation*}
for all $x \in \mathbb R$.
In conclusion we have for any $k>1$ and $x$ that
\begin{equation*}
\sum_{l=1}^{\infty} \lvert \omega_{l}(x) - \omega_{l}(x+n_{k}\alpha) \rvert \ \leq \ \sum_{l=1}^{\infty}\frac{1}{l^{2}} \ \leq \  2\ .
\end{equation*}
Note that this implies that for all $x$,
\begin{equation*}\label{15-03-19d}
\lvert \sum_{i=0}^{n_{k}-1} f(x+i\alpha) \rvert \ \leq \ 2 \ .
\end{equation*}

By identifying the circle $\mathbb T$ with $\mathbb R/ \mathbb Z$ the function $f$ becomes a potential $F : \mathbb T \to \mathbb R$. Putting $\omega(x) = 0$ when $\sum_{k=1}^{\infty} \omega_k(x) = \infty$ we can consider $\omega$ as a real-valued Borel function $v$ on $\mathbb T$. Then $F$ and $v$ have the properties specified in Proposition \ref{prop74}.

\end{document}